\theoremstyle{thmstyletwo}%
\newtheorem{theorem}{Theorem}%  meant for continuous numbers
\newtheorem{example}{Example}%
\newtheorem{remark}{Remark}%
\numberwithin{equation}{section}
\newtheorem{lemma}{Lemma}[section]
\newtheorem{corollary}{Corollary}[section]
\newtheorem{assumption}{Assumption}[section]
\newcommand\uldq{\underline{dq}}
\newcommand\uldu{\underline{du}}
\newcommand\ulda{\underline{da}}
\newcommand\uldc{\underline{dc}}
\newcommand\oneI{{(1,\ldots,I)}}
\newcommand\plusgbar{}
\newcommand\plushbar{+\bar{h}}
\newcommand\nsp{\textup{nsp}}
\newcommand\rng{\textup{rng}}
\newcommand{\Margin}[1]{}
\newcommand{\revision}[1]{#1}
\begin{document}

\DOI{DOI HERE}
\copyrightyear{2022}
\vol{00}
\pubyear{2022}
\access{Advance Access Publication Date: Day Month Year}
\appnotes{Paper}
\copyrightstatement{Published by Oxford University Press on behalf of the Institute of Mathematics and its Applications. All rights reserved.}
\firstpage{1}

%\subtitle{Subject Section}

\title[The range invariance paradigm for parameter identification from boundary data]{Convergence guarantees for coefficient reconstruction in PDEs from boundary measurements by variational and Newton type methods via range invariance}
\author{Barbara Kaltenbacher*
\address{\orgdiv{Department of Mathematics}, \orgname{University of Klagenfurt}, \orgaddress{\street{Universit\"atsstr. 65-67}, \postcode{9020 Klagenfurt}, 
%\state{}, 
\country{Austria}}}}
%\author{Second Author and Third Author\ORCID{0000-0000-0000-0000}
%\address{\orgdiv{Department}, \orgname{Organization}, \orgaddress{\street{Street}, \postcode{Postcode}, \state{State}, \country{Country}}}}
%\author{Fourth Author
%\address{\orgdiv{Department}, \orgname{Organization}, \orgaddress{\street{Street}, \postcode{Postcode}, \state{State}, \country{Country}}}}
%\author{Fifth Author
%\address{\orgdiv{Department}, \orgname{Organization}, \orgaddress{\street{Street}, \postcode{Postcode}, \state{State}, \country{Country}}}}

\authormark{Barbara Kaltenbacher}

\corresp[*]{Corresponding author: \href{email:barbara.kaltenbacher@aau.at}{barbara.kaltenbacher@aau.at}}

\received{Date}{0}{Year}
\revised{Date}{0}{Year}
\accepted{Date}{0}{Year}

%\editor{Associate Editor: Name}

\abstract{A key observation underlying this paper is the fact that the range invariance condition for convergence of regularization methods for nonlinear ill-posed operator equations -- such as coefficient identification in partial differential equations PDEs from boundary observations -- can often be achieved by extending the searched for parameter in the sense of allowing it to depend on additional variables. 
This clearly counteracts unique identifiability of the parameter, though. The second key idea of this paper is now to restore the original restricted dependency of the parameter by penalization. This is shown to lead to convergence of variational (Tikhonov type) and iterative (Newton type) regularization methods. We concretize the abstract convergence analysis in a framework typical of parameter identification in PDEs in a reduced and an all-at-once setting. This is further illustrated by three examples of coefficient identification from boundary observations in elliptic and 
%parabolic 
\revision{time dependent}
PDEs.
}
\keywords{parameter identification in PDEs; iterative regularization; variational  regularization; range invariance condition.}

% \boxedtext{
% \begin{itemize}
% \item Key boxed text here.
% \item Key boxed text here.
% \item Key boxed text here.
% \end{itemize}}

\maketitle

\Margin{R1 4.}

\section{Introduction}
A proof of convergence of iterative methods for parameter identification problems in partial differential equations PDEs from boundary measurements, as relevant, e.g., in tomographic applications, has been a long-standing open problem due to the fact that the convergence analysis of these methods can only be carried out under restrictions on the nonlinearity of the forward operator that could not be verified for such PDE coefficient identification problems so far. Likewise, although at a first glance not burdened with such restrictive assumptions on the forward operator, Tikhonov regularization requires the computation of a global minimizer of a functional whose (local) convexity can only be verified under similar restricions on the nonlinearity.

The goal of this paper is to revisit one of these conditions -- range invariance of the linearized forward operator -- and show that an 
\revision{introduction of additional/artificial degrees of freedom in
%extension of variability of 
the searched for parameter }
\footnote{we will shortly call this ``extension''} 
\Margin{R2 2.}
often allows for its verification. Since this counteracts unique identifiability of the parameter, we restore the original restricted dependency of the parameter by a proper penalization within the reconstruction method.

\medskip

To set the stage and explain ideas, we consider an inverse problem either in its
all-at-once formulation
\begin{equation}\label{aao}
\left.\begin{array}{ll}
A(q,u)=0 &\mbox{(model equation)}\\
Cu=y&\mbox{(observation equation)}
\end{array}\right\} 
\quad \Leftrightarrow: \ \mathbb{F}(q,u)=(0,y)^T
\end{equation}
or in its reduced formulation, with a parameter-to-state operator $S:\mathcal{D}(\mathbb{F})\to V$ defined by the first equation in 
\begin{equation}\label{red}
A(q,S(q))=0\mbox{ and }C(S(q))=y \quad \Leftrightarrow: \ \mathbf{F}(q)=y.
\end{equation}
Here $A:Q\times V\to W^*$ is the model operator, $C$ the observation operator and $Q$, $V$, $W^*$, $Y$ are Banach spaces. 
More generally, consider
\begin{equation}\label{Fxy}
F(x)=y
\end{equation}
with an operator $F:\mathcal{D}(F)(\subseteq X)\to Y$, 
mapping between Banach spaces $X$ and $Y$, given noisy data $y^\delta$ with
\begin{equation}\label{delta}
\|y-y^\delta\|_Y\leq\delta
\end{equation}
and denoting by $x^\dagger$ an exact solution.
This comprises both all-at-once \eqref{aao} with $x=(q,u)$, $F=\mathbb{F}$ and reduced \eqref{red} with $x=q$, $F=\mathbf{F}$ formulations.

Due to the characteristic ill-posedness of these problems, 
%and the fact that in practice only noisy data is available, 
regularization has to be applied, cf., e.g., 
\cite{BakKok04, EnglHankeNeubauer:1996, Kirsch:2021, KalNeuSch08, SchusterKaltenbacherHofmannKazimierski:2012}.

To guarantee well-definedness and convergence of these methods, additional assumptions on the forward operator $F$ (that is, in the above settings, $\mathbb{F}$ or $\mathbf{F}$) need to be imposed.
These basically fall into two categories:

In the first and most common are the {\em tangential cone condition}
\cite{Scherzer:1995}
\begin{equation}\label{tangcone}
\forall x,\tilde{x}\in U \,: \ \|F(x)-F(\tilde{x})- F'(x)(x-\tilde{x})\|_Y\leq c_{\textup{tc}}
\|F(x)-F(\tilde{x})\|_Y
\end{equation}
(in a neighborhood $U$ of the exact solution)
for the convergence of Landweber iteration
\cite{HankeNeubauerScherzer:1995}
and Newton's method
\cite{IRGNMIvanov}
as well as the closely related {\em weak nonlinearity} condition from
\cite{ChaventKunisch:1996}
for local convexity of the Tikhonov functional, whose global minimizer must be computed in Tikhonov regularization.
Also in this category is the Newton-Mysovskii condition 
\begin{equation}\label{Newton-Mysovskii}
\forall x,\tilde{x}\in U \,: \ \|(F'(x)-F'(\tilde{x}))F'(x)^\dagger\|_X\leq C_{\textup{NM}} \|x-\tilde{x}\|_X,
\end{equation}
(where ${}^\dagger$ denotes the generalized inverse),
used and discussed in \cite{DeEnSc98}.
A sufficient condition for these -- and actually quite often used for their verification -- is invariance of the range of the adjoint of $F'$
\begin{equation}\label{RF}
\forall x,\tilde{x}\in U \,: \quad \rng(F'(x)^\star) = \rng(F'(\tilde{x})^\star).
\end{equation}

The second, less well-explored category of nonlinearity conditions is related to range invariance of $F'$ itself
\begin{equation}\label{FR}
\forall x,\tilde{x}\in U \,: \quad \rng(F'(x)) = \rng(F'(\tilde{x})).
\end{equation}
%\cite{diss,Broyden,NewtonKaczmarz}.
It contains the {\em affine covariant Lipschitz condition} 
\begin{equation}\label{affine-covariantLipschitz}
%\forall x,\tilde{x}\in U \,: \ \|F'(x)^\dagger(F(x)-F(\tilde{x})- F'(x)(x-\tilde{x}))\|_X\leq C_{\textup{acL}} \|x-\tilde{x}\|_X^2,
\forall x,\tilde{x}\in U \,: \ \|F'(x)^\dagger(F'(x)-F'(\tilde{x}))\|_X\leq C_{\textup{acL}} \|x-\tilde{x}\|_X,
\end{equation}
(compare to \eqref{Newton-Mysovskii})
which allows to prove convergence of Newton and quasi Newton methods, cf., e.g., \cite{diss,NewtonKaczmarz,DeEnSc98,dpapertheo,KNSbook:2008,Broyden}.

Adjoint range invariance \eqref{RF} (and alongside the tangential cone condition as well as weak nonlinearity and the Newton--Mysovskii condition) can usually only be proven to hold under a full observation assumption, that is, $C$ is just the identity or some embedding operator, which excludes practically relevant scenarios of identifying coefficients from boundary measurements.
In contrast to this, range invariance of $F'$ itself \eqref{FR} clearly follows from range invariance of the derivative $S'$ of $S$ in \eqref{red} and this is actually the way it is usually verified. This condition is independent of the observation $C$ and in particular also allows for boundary observations as relevant in tomographic imaging or nondestructive testing.
We will therefore focus on range invariance of $F'$ itself here.

By means of the Closed Graph Theorem, \eqref{FR} can be shown to be equivalent to boundedness and bounded invertibility of $F'(x)^\dagger F'(\tilde{x})$ for all $x,\tilde{x}\in U$, (see \revision{Lemma~\ref{lem:ABR} in Appendix~\ref{Appendix:Lemmas}} 
\Margin{R1 1.}
and compare to \eqref{affine-covariantLipschitz})
%(which is basically the affine convariant Lipschitz condition \eqref{affine-covariantLipschitz}), see \cite{Kirsch:2021}, 
or, alternatively written, to 
\begin{equation*}%\label{rangeinvar_gen}
\forall x, \tilde{x}\in U \, \exists R(x,\tilde{x})\in L(X,X)\,: \ 
R(x,\tilde{x})^{-1}\in L(X,X) \mbox{ and }  
F'(x)=F'(\tilde{x})R(x,\tilde{x}).
\end{equation*}
In view of bounded invertibility of $R(x,\tilde{x})$, it suffices to fix $\tilde{x}$ 
here
%in \eqref{rangeinvar_gen} 
to some $x_0\in U$
\begin{equation}\label{rangeinvar}
\exists x_0\in U\, \forall x\in U\, \exists R(x)\in L(X,X)\,: \  
R(x)^{-1}\in L(X,X) \mbox{ and }
F'(x)=F'(x_0)R(x),
\end{equation}
\revision{where sufficiency can be seen by setting $R(x,\tilde{x})=R(\tilde{x})^{-1}R(x)$.}
\Margin{R1 1.}
Here $U\subset \mathcal{D}(F)$ is a neighborhood of $x^\dagger$ and $F'$ the G\^{a}teaux derivative, which we assume to exist whenever using it.
\revision{
\begin{remark}\label{rem:equivalence_ranginvar_aao-red}
The validity of range invariance for arbitrary observation operator $C$ is formally equivalent for all-at-once \eqref{aao} and reduced \eqref{red} formulations, provided $\tfrac{\partial A}{\partial u}(q,u)$ is an isomorphism for $(q,u)\in U$ (as usually assumed in the reduced setting), see Lemma~\ref{Lem:equivalence_ranginvar_aao-red} in Appendix~\ref{Appendix:Lemmas}.
Note however, that much more freedom in the choice of function spaces is allowed in the all-at-once case and when dropping the restrictions of $\tfrac{\partial A}{\partial u}(q,u)$ to be an isomorphism and $(q,u)=(q,S(q))$, $(q_0,u_0)=(q_0,S(q_0))$. This becomes crucial when establishing $R(x)\in L(X,X),\,R(x)^{-1}\in L(X,X)$.
\end{remark}
}

\subsection{Examples}
For further motivation, we provide three classical examples of parameter identification problems where range invariance of the linearization of the forward operator can be verified.
\revision{All the computations will be completely formal; for details we refer to Appendix~\ref{Appendix:IntroEx}.
\Margin{R2 (a)}
As can be clearly seen there, in the all-at-once setting we have much more freedom in choosing the function spaces as compared to the reduced case, where we are bound to PDE solution theory in order to establish the parameter-to-state map $S$.
}

\begin{example}[identification of a potential in an elliptic PDE]\label{ex:pot_ell}
\normalfont
(For the reduced case \eqref{red}, see, e.g., \cite[Example 3.2]{dpapertheo}).
%\cite{Broyden}.) 
Consider identification of the spatially varying potential $q\in L^2(\Omega)$ in the elliptic boundary value problem
\begin{equation}\label{PDE_pot}
\begin{aligned}
-\Delta u +q(x)u&= f\mbox{ in }\Omega\\
\partial_\nu u&= h\mbox{ on }\partial\Omega
\end{aligned} 
\end{equation}
with given $f$, $h$,
from observations $y=Cu$ of $u$, for example measurements at the boundary of the domain $\Omega$.

With an extension $\bar{h}$ of the boundary data according to \eqref{hbar}, 
we can write the inverse problem in a reduced form \eqref{red} as $\mathbf{F}(q)=y$, or in an all-at-once form \eqref{aao} as $\mathbb{F}(q,u)=(0,y)^T$ with the respective reduced and all-at-once forward operators being defined by 
\begin{equation}\label{opeq_ex:potell}
\begin{aligned}
&\mathbf{F}(q)=CS(q)=-C(-\Delta_N+q\cdot)^{-1}[f+\bar{h}], \\
&\mathbb{F}(q,u)=(-\Delta_N u+q\cdot u -f-\bar{h},\, Cu)^T.
\end{aligned}
\end{equation}
where $\Delta_N$ denotes the Neumann Laplacian \eqref{NeumannLapl}.

The linearizations of $\mathbf{F}$ and $\mathbb{F}$ are formally given by
\begin{equation}\label{opeq_ex:potell_deriv}
\begin{aligned}
&\mathbf{F}'(q)\uldq=
-C(-\Delta_N+q\cdot)^{-1}\Bigl[\uldq\cdot S(q)\Bigr], \\ 
&\mathbb{F}'(q,u)(\uldq,\uldu)=(-\Delta_N\uldu+q\cdot\uldu+\uldq\cdot u,\, C\uldu)^T,
\end{aligned}
\end{equation}
and can easily be verified to be Fr\'{e}chet derivatives in the function space setting given in %Appendix~\ref{Appendix:IntroEx}
\eqref{VWell}, \eqref{DF}. 
For any fixed $q_0\in L^2(\Omega)$, setting
\begin{equation}\label{Rs_ell}
\begin{aligned}
\mathbf{R}(q)\uldq&= \tfrac{1}{S(q_0)\plusgbar}\cdot \bigl((-\Delta_N+q_0\cdot)\, (-\Delta_N+q\cdot)^{-1}[\uldq\cdot S(q)]\bigr)\\
&= \tfrac{1}{S(q_0)\plusgbar}\cdot \bigl(\uldq\cdot S(q)+(q-q_0)\cdot S'(q)\uldq\bigr)
\\[1ex]
\mathbb{R}(q,u)(\uldq,\uldu)&=\Bigl(\tfrac{1}{u_0\plusgbar}\bigl(\uldq\cdot u+(q-q_0)\cdot\uldu \bigr) ,\, \uldu\Bigr)
\end{aligned}
\end{equation}
we get 
\begin{equation}\label{rangeinvar_red_aao}
\mathbf{F}'(q) = \mathbf{F}'(q_0)\mathbf{R}(q), \quad 
\mathbb{F}'(q,u) = \mathbb{F}'(q_0,u_0)\mathbb{R}(q,u),
\end{equation}
where boundedness away from zero of the denominators can be guaranteed by maximum principles for $S(q_0)\plusgbar$ and simply by a proper choice for $u_0\plusgbar$.
One can also prove (see 
%Appendix~\ref{Appendix:IntroEx}
\eqref{RIest_ell_App}) that 
\begin{equation}\label{RIest_ell}
\|\mathbf{R}(q)-\textup{id}\|_{Q\to Q}\leq C\|q-q_0\|_{Q}, \quad 
\|\mathbb{R}(q,u)-\textup{id}\|_{Q\times V\to Q\times V}\leq C\|(q,u)-(q_0,u_0)\|_{Q\times V},
\end{equation}
so that in a sufficiently small neighborhood $U$ of $q_0$ (or of $(q_0,u_0)$, respectively), the $R$ operators are bounded and boundedly invertible, which implies range invariance of the linearizations  
\[
\forall q\in U\, : \ \rng(\mathbf{F}'(q)) = \rng(\mathbf{F}'(q_0)), \quad 
\forall (q,u)\in U\, : \ \rng(\mathbb{F}'(q,u)) = \rng(\mathbb{F}'(q_0,u_0)),
\]
no matter which observation operator $C$ is contained in the definition of $\mathbf{F}$, $\mathbb{F}$. 
%as long as it is linear.
Note that here, besides linearity and boundedness $C\in L(V,Y)$, we do not make any assumptions on the observation operator $C$.
This example has also been shown to satisfy the tangential cone condition in the reduced setting \eqref{red} \cite{HankeNeubauerScherzer:1995}, however only with full measurements, that is, $C$ being the embedding operator $V\to L^2(\Omega)$.

Thus, range invariance is easy to verify here, even in case of restricted measurements, as opposed to the tangential cone condition or adjoint range invariance. Still, keeping also unique identifiability of $q$ in mind, the operator $C$ should not be too restrictive either. 
For example by a straightforward dimension count, uniqueness from a single boundary observation cannot be expected to hold here and in fact the full Neumann-to-Dirichlet map (see \eqref{Lambda_pot} below) assigning to any prescribed boundary flux $\partial_\nu u$ the corresponding Dirichlet data $\textup{tr}_{\partial\Omega}u$ can be shown to yield uniqueness (see \cite{Isakov:2006} for $d=2$, \cite{Nachman:1988} for $d\geq3$).
\end{example}

\begin{example}[identification of a potential in a 
%parabolic
\revision{time-dependent} 
PDE] \label{ex:pot_trans}
\normalfont
We add a time dimension by considering the 
%parabolic
\revision{transient} 
version of Example~\ref{ex:pot_ell}
\begin{equation}\label{PDE_pot_par}
\begin{aligned}
\revision{D_t^M}u-\Delta u +q(x)u&= f\mbox{ in }\Omega\times(0,T)\\
\partial_\nu u&= h\mbox{ on }\partial\Omega\times(0,T)\\
\revision{\partial_t^m u(x,0)}&=\revision{u_m(x) \ x\in\Omega\quad m\in\{0,\ldots,m-1\}} 
\end{aligned} 
\end{equation}
with given $f$ and $h$. 
\revision{Here besides the parabolic case with $D_t^M=\partial_t$ one can consider quite general transient models, containing also wave and time fractional equations.}
\Margin{R2 19.}
\revision{In particular, we have strongly damped wave equations with $M=2$ and  
$D_t^M:=\partial_{tt} - b \Delta\partial_t$ for some $b>0$ in mind.} 

With an extension $\bar{h}$ of the initial and boundary data such that $\revision{D_t^M}\bar{h}-\Delta\bar{h}=f$ in $\Omega\times(0,T)$, $\partial_\nu\bar{h}=h$ on $\partial\Omega\times(0,T)$, 
%$\bar{h}(x,0)=u_0(x)$, $x\in\Omega$ 
\revision{$\partial_t^m \bar{h}(x,0)=u_m(x)$, $x\in\Omega$, $m\in\{0,\ldots,M-1\}$} 
and $\hat{u}:=u-\bar{h}$, this becomes the abstract ODE 
\begin{equation}\label{abstrODE}
\revision{D_t^M}\hat{u}(t)+(-\Delta_N +q\cdot) \hat{u}= -q\cdot\bar{h}\,, \quad \hat{u}(0)=0;
\end{equation}
the hat %$\hat{}$
\Margin{R2 5.} 
will be skipped in the following.

For a space- and possibly also time-dependent potential $\widetilde{q}$, the operator
$T(\widetilde{q})$ defined by $(T(\widetilde{q})v)(t)= \revision{D_t^M}v(t)-\Delta_Nv(t) +\widetilde{q}(t)\cdot v(t)$, can be shown to be bounded and boundedly invertible in appropriate function spaces (cf. Appendix~\ref{Appendix:IntroEx}).
With this notation, we can write $u=S(q)=-T(q)^{-1}[q\cdot \bar{h}]$ and this formula applies both for stationary and time variable potentials $q$. 
This allows to define the reduced and all-at-once forward operators
\begin{equation}\label{opeq_ex:potpar}
\begin{aligned}
&\mathbf{F}(q)=CS(q)=-CT(q)^{-1}[q\cdot\bar{h}], \\
&\mathbb{F}(q,u)=(u'-\Delta_N u+q\cdot (u\plushbar),\, Cu)^T\,.
\end{aligned}
\end{equation}
Analogously to the above elliptic example (more or less just substituting $(-\Delta_N+q\cdot)$ by $T(q)$) we arrive at range invariance \eqref{rangeinvar_red_aao} of the linearized reduced and all-at-once forward operators with
\begin{equation}\label{Rs_par}
\begin{aligned}
&\mathbf{R}(q)\uldq= \tfrac{1}{S(q_0)\plushbar}\cdot \, T(q_0)\, T(q)^{-1}[\uldq\cdot(S(q)\plushbar)]\\
&\mathbb{R}(q,u)(\uldq,\uldu)=\Bigl(\tfrac{1}{u_0\plushbar}\bigl(\uldq\cdot(u\plushbar)+(q-q_0)\cdot\uldu \bigr) ,\, \uldu\Bigr).
\end{aligned}
\end{equation}
Also smallness of the difference between the $R$ operators and the identity can be established similarly to the elliptic example.

Note that even if $q$ depends on $x$ only, due to multiplication with the time dependent function $\tfrac{1}{S(q_0)\plushbar}$ or $\tfrac{1}{u_0\plushbar}$, the result after application of one of the $R$ operators in \eqref{Rs_par} will in general be a space and time dependent function. 
Thus we are forced to use a time dependent parameter space, e.g.
\revision{$L^2(0,T;L^2(\Omega))$.}
%$H^s(0,T;L^2(\Omega))$ for some $s\in(0,1]$.

In this example, uniqueness of $q(x)$ from a single time trace on the boundary or final time measurements of $u$ 
\[
C = \textup{tr}_{\Gamma\times(0,T)} \ \textup{ or } \ C = \textup{tr}_{\Omega\times\{T\}} 
\]
can be shown \cite{Isakov:1991,Pierce:1979,Rundell:1987}.
However, as we have just seen, establishing range invariance requires extension of the parameter space, which leads to a loss of uniqueness. 
Therefore, we will introduce a restriction procedure by penalization, that allows to show convergence to the unique solution of the original inverse problem (that is, with the original smaller parameter space).
In this example, this can be done, e.g., by means of the functional 
$\mathcal{P}(q):=\|Pq\|_Z^2$ with $Z=L^2(0,T;L^2(\Omega))$ and 
$P(q)(t)= q(t) -\frac{1}{T}\int_0^T q(s)\, ds$, using the $L^2$ projection of $q$ on the subspace of functions that are constant in time.
%$P(q)(t)= q'(t)=\frac{\partial q}{\partial t}(t)$ in case $s=1$ or $(Pq)(t)=\int_0^T\frac{|q(t)-q(\tau)|^2}{|t-\tau|^{2s+1}}\, d\tau$ in case $s\in(\frac12,1)$.
\end{example}
\begin{example}[identification of a diffusion coefficient in an elliptic PDE]\label{ex:diff_ell} 
\normalfont
We now consider identification of the coefficient $a(x)$ in
\begin{equation}\label{PDE_EIT}
\begin{aligned}
-\nabla\cdot(a(x)\nabla u)&= f\mbox{ in }\Omega\\
a(x)\partial_\nu u&= h\mbox{ on }\partial\Omega.
\end{aligned} 
\end{equation}
with given $f$, $h$.
In case $f=0$ and with the physical interpretation of $u$ being an electric potential, $a$ the conductivity, and the measurements defined by all voltage-current pairs achievable by excitation at electrodes on the boundary, this is the well-known and well-investigated Calder\'{o}n problem with application in electric impedance tomography EIT.

With general measurements defined by a linear operator $C$, this can be written in a reduced \eqref{red} or in an all-at-once form \eqref{aao} with $q=a$ and 
\begin{equation}\label{opeq_ex:diffusion}
\begin{aligned}
&\mathbf{F}(a)=CS(a)=-C(-\Delta_{a,N})^{-1}[f+\bar{h}], \\
&\mathbb{F}(a,u)=(-\Delta_{a,N} u -f-\bar{h},\, Cu)^T,
\end{aligned}
\end{equation}
where
\begin{equation}\label{Delta_aN}
\langle -\Delta_{a,N} u,v\rangle_{H^1(\Omega)^*, H^1(\Omega)} := \int_\Omega a \,\nabla u\cdot\nabla v\, dx \quad \forall u,v\in H^1(\Omega)
\end{equation}
and $\bar{h}$ is defined by \eqref{hbar}.

Here, verification of \eqref{FR} (and likewise of \eqref{RF}) only works out in one space dimension. To see this, we consider the derivatives
\[
\begin{aligned}
&\mathbf{F}'(a)\ulda=
-C(-\Delta_{a,N})^{-1}\Bigl[\Delta_{\ulda,N} S(a)\Bigr], \\ 
&\mathbb{F}'(a,u)(\ulda,\uldu)=(-\Delta_{a,N}\uldu-\Delta_{\ulda,N}u,\, C\uldu)^T,
\end{aligned}
\]
and write the sufficient condition for \eqref{FR} (which is also necessary if we want it to hold for arbitrary $C$) in the reduced case as
\[
(-\Delta_{a,N})^{-1}\Bigl[\Delta_{\ulda,N} S(a)\Bigr]
= (-\Delta_{a_0,N})^{-1}\Bigl[\Delta_{R(a)\ulda,N} S(a_0)\Bigr]
\quad \forall \ulda\in Q
\]
which is equivalent to 
\[
\Delta_{R(a)\ulda,N} S(a_0) 
= (-\Delta_{a_0,N})(-\Delta_{a,N})^{-1}\Bigl[\Delta_{\ulda,N} S(a)\Bigr]
\quad \forall \ulda\in Q.
\]

This can be viewed as a transport equation
\begin{equation}\label{transport_a}
\nabla r\cdot\nabla S(a_0) + r\Delta_N S(a_0) = 
\Delta_{a-a_0,N}S'(a)\ulda+ \nabla\cdot(\ulda \nabla S(a))
\end{equation}
for $r=R(a)\ulda$. Since in general the directions of the gradients $\nabla S(a_0)$ and $\nabla S(a)$ vary in space and differ from each other, an estimate of the $L^p$ norm of the $\ulda$ term will contain a term of the form $\|\nabla\ulda\|_{L^t(\Omega)}\|\nabla S(a)\|_{L^s(\Omega)}$ (with exponents $t,\,s$ chosen according to H\"older's inequality), whereas the left hand side of \eqref{transport_a} only provides directional derivatives of $r=R(a)\ulda$ in the direction of $\nabla S(a_0)$ and thus not the full $W^{1,t}(\Omega)$ seminorm of $r$ in dimension $d>1$.  

An analogous problem occurs in the all-at-once case.
Thus we are limited to the 1-d setting, 
%$\Omega=(\alpha,\beta)$, 
where this loss of regularity does not occur. Indeed, it can be easily checked that the %simple choice $R(a)\ulda:=\frac{a^2}{a_0^2}\cdot\ulda$ 
choice $R(a)\ulda:=-a_0\cdot\frac{(S'(a)\ulda)_x}{(S(a_0)\revision{)}_x}$ 
yields \eqref{FR}, provided $(S(a_0)\revision{)}_x$ is bounded away from zero.
\Margin{R2 6.}

An idealized formulation of the observations in EIT is given by the Neumann-to-Dirichlet N-t-D map, defined analogously to \eqref{Lambda_pot} below; for the more realistic complete electrode model, cf. \cite{SomersaloCheneyIsaacson:1992}.
As typical for tomographic applications, these are boundary and not interior observations and so verification of the tangential cone condition \eqref{tangcone} or adjoint range invariance \eqref{RF} seems out of reach. We have seen that also range invariance in the form \eqref{FR} does not work out in higher than one space dimension. Still, since \eqref{PDE_EIT} is related to \eqref{PDE_pot} via the transform 
\begin{equation}\label{transfEIT}
u\to\sqrt{a} u, \quad q=\frac14 \frac{|\nabla a|^2}{a^2}-\frac12\frac{\Delta a}{a},
\end{equation} 
Example~\ref{ex:pot_ell} shows that \eqref{FR} indeed makes sense also in tomographic applications.
Clearly, the transform \eqref{transfEIT} requires $a$ to be sufficiently smooth, which is often not the case in applications. Therefore it would be desirable to cover the problem in its original form \eqref{PDE_EIT} that only requires an $L^\infty$ coefficient $a$.
In Example~\ref{ex:diffabs} we will show that this is in fact possible in case of simultaneous identification of a diffusion and an absorption coefficent.

%While having stated the problem in its general form on a domain $\Omega$, \eqref{Omega}, we will soon see that in order to establish range invariance, we have to restrict attention to one space dimension (which is actually also true for the tangential cone condition and for the parabolic version of this problem \cite{HNS95,TCC_time}).
\end{example}

The plan of this paper is as follows.
In Section \ref{sec:convana} we derive and analyze some regularization methods making use of the structure of range invariance \eqref{rangeinvar} (or actually a more general condition in difference form \eqref{rangeinvar_diff}. Doing so, we will follow two paradigms: The first is variational, that is, by minimizing an appropriately defined objective function as known from classical Tikhonov regularization.
The second is Newton's method or frozen versions thereof, which we regularize by stabilizing each Newton step and by early stopping.
\Margin{R1 5.}
\Margin{R2 7.}
Section \ref{sec:exclass} provides a general class of applications, typical of coefficient identification in PDEs, where the abstract convergence conditions (in particular  \eqref{rangeinvar_diff}) can be verified in a reduced 
%(i.e., using a parameter-to-state-map) 
and an all-at-once setting, respectively.
Finally, in Section \ref{sec:ex}, we provide concrete examples of coefficient identification in PDEs to which this framework applies.

\subsection*{Notation}\label{sec:notation}
$L(X,Y)$ denotes the space of bounded linear operators between the normed spaces $X$ and $Y$.\\
As already done above, we will denote the range of an operator $A:X\to Y$ by $\rng(A)\subseteq Y$; its nullspace will be abbreviated by $\nsp(A)\subseteq X$.\\
The dual of $X$ is denoted by $X^*$, the (Banach space) adjoint by $A^*\in L(Y^*,X^*)$, and in case of $X$, $Y$ being Hilbert spaces, $A^\star\in L(Y,X)$ denotes the Hilbert space adjoint.\\
A closed ball with radius $\rho>0$ and center $x_0$ is denoted by $\mathcal{B}_\rho^X(x_0)=\{x\in X\, : \, \|x-x_0\|\leq\rho\}\subseteq X$.\\
We employ the notation $W^{s,p}(\Omega)$, $H^s=W^{s,2}(\Omega)$ for Sobolev spaces over some domain $\Omega$ and $L^p_\mu(J;Z)$, $W^{s,p}_\mu(J;Z)$ for Bochner-Sobolev spaces of a variable in $J$ with values in the Banach space $Z$, with respect to the measure $\mu$, cf., e.g., \cite[Section 7.1]{Roubicek}.\\
In particular, we will use $H_\diamondsuit^M(0,T;L^2(\Omega))=\{v\in H^M(0,T;L^2(\Omega))\, : \, \partial_t^m v(0)=0, \ m\in\{0,\ldots,M\}\}$.\\
The norm of the continuous embedding $X(\Omega)\to Y(\Omega)$ for function spaces on a domain $\Omega$  is denoted by $C_{X,Y}^\Omega$.\\
For $\Gamma\subseteq\overline{\Omega}$ (the closure of a $d$-dimensional domain $\Omega$), $\Gamma$ being a $d-1$ dimensional regular manifold (e.g., $\Gamma=\partial\Omega$) and $s\geq0$, by $\textup{tr}_\Gamma:H^s(\Omega)\to H^{s-1/2}(\Gamma)$ we denote the trace operator. 

\section{Structure exploiting reconstruction methods and their convergence}\label{sec:convana}

We return to the general inverse problem \eqref{Fxy} comprising reduced and all-at-once formulations.
Rather than working with \eqref{rangeinvar} directly, we will base our analysis on a
differential range invariance condition:  
\begin{equation}\label{rangeinvar_diff}
\exists x_0\in U\,, \ K\in L(\widetilde{X},Y)\, \forall x\in U \, \exists r(x)\in \widetilde{X}\,: \  F(x)-F(x_0)=Kr(x),
\end{equation}
where we assume $K\in L(\widetilde{X},Y)$ for some auxiliary space $\widetilde{X}$.
\revision{
Analogously to Remark~\ref{rem:equivalence_ranginvar_aao-red} one can show formal equivalence of \eqref{rangeinvar_diff} for arbitrary observation operator $C$ for all-at-once \eqref{aao} and reduced \eqref{red} formulations, provided $\tfrac{\partial A}{\partial u}(q,u)$ is an isomorphism for $(q,u)\in U$. 
But again the crucial difference in its verifiability is made by the much wider choice of function spaces in the all-at-once setting.
}

Condition \eqref{rangeinvar} is sufficient for \eqref{rangeinvar_diff} with $\widetilde{X}=X$, $K=F'(x_0)$, $r(x)=\int_0^1R(x_0+\theta(x-x_0))(x-x_0)\, d\theta$, provided $U$ is convex.
On the other hand, \eqref{rangeinvar_diff} implies
\begin{equation}\label{FprimeK}
\begin{aligned}
\forall x,\tilde{x}\in U\,: \ F'(x)(\tilde{x}-x) 
&= \lim_{\epsilon\to0}\frac{1}{\epsilon}\Bigl(F(x+\epsilon(\tilde{x}-x))-F(x)\Bigr)\\
&= K\lim_{\epsilon\to0}\frac{1}{\epsilon}\Bigl(r(x+\epsilon(\tilde{x}-x))-r(x)\Bigr)
= K r'(x)(\tilde{x}-x),
\end{aligned}
\end{equation}
where we have used continuity of $K$.
Since $\tilde{x}$ and therewith the direction $\tilde{x}-x$ is arbitrary this implies
\[
\forall x\in \mbox{int}(U)\,: \ F'(x)= K r'(x) = F'(x_0)r'(x_0)^{-1}r'(x),
\]
from which \eqref{rangeinvar} follows provided $U$ is open, $r$ G\^{a}teaux differentiable and $r'(x_0)$ is boundedly invertible.

We will see in Section \ref{sec:ex} 
\Margin{R1 5.}
\Margin{R2 8.}
that, e.g., in \revision{E}xample~\ref{ex:pot_ell}, condition \eqref{rangeinvar_diff} is satisfied with easier to compute expressions as compared to \eqref{Rs_ell}, namely
$r(q)= (q-q_0)\cdot\tfrac{S(q)\plusgbar}{S(q_0)\plusgbar}$ for \eqref{red},
$r(q,u)= ((q-q_0)\cdot\tfrac{u\plusgbar}{u_0\plusgbar}, \, u-u_0)$ for \eqref{aao} and likewise for \revision{E}xample~\ref{ex:pot_trans}.
%\eqref{Rs_par}.
\Margin{R2 9.}

Besides being weaker than \eqref{rangeinvar} by requiring less differentiability on the forward operator $F$, condition \eqref{rangeinvar_diff} also allows for an immediate computational approach by splitting the inverse problem into an ill-posed linear and a well-posed nonlinear part as
\begin{equation*}%\label{KrF}
\begin{aligned}
&K\hat{r}=y-F(x_0) &&\textup{ in }Y\\
&r(x)=\hat{r} &&\textup{ in }\widetilde{X},
\end{aligned}
\end{equation*}
see also \cite[Remark 2.2]{DeEnSc98}.

%\medskip

Since establishing the range invariance condition \eqref{rangeinvar_diff} will sometimes require extension of the parameter space and this may lead to a loss of unique identifiability, we here add a penalty term that in the limit restricts reconstructions to the original parameter space, where they can (more likely) be shown to be unique. We do so by means of a penalty functional $\mathcal{P}:X\to[0,\infty]$ such that $\mathcal{P}(x^\dagger)=0$ for the exact solution $x^\dagger$. The latter implies that $\mathcal{P}$ is proper; typically, $\mathcal{P}$ will also be convex, although we do not explicitely assume this in our analysis.

The problem to be solved can therefore be rewritten as a system
\begin{equation}\label{FP}
\begin{aligned}
&K\hat{r}=y-F(x_0)&&\textup{ in }Y\\
&r(x)=\hat{r}&&\textup{ in }\widetilde{X}\\
&\mathcal{P}(x)=0&&\textup{ in }[0,\infty]
\end{aligned}
\end{equation}
for the unknowns $(\hat{r},x)\in\widetilde{X}\times U$, $U\subseteq X$.

We will consider four approaches for solving the system \eqref{FP}: A variational and three Newton type methods, 
\revision{two of which will be described in Appendix~\ref{Appendix:Newton}.}
\Margin{R 2 (b)}

\subsection{Variational reconstruction}
We minimize a regularized combination of the misfit in the first two equations in \eqref{FP} and the penalty functional $\mathcal{P}:U(\subseteq X)\to[0,\infty]$
\begin{equation}\label{var}
\begin{aligned}
&(\hat{r}_{\alpha,\beta}^\delta,x_{\alpha,\beta}^\delta)\in \mbox{argmin}_{(\hat{r},x)\in\widetilde{X}\times U} J_{\alpha,\beta}^\delta(\hat{r},x)\\
&\mbox{where } J_{\alpha,\beta}^\delta(\hat{r},x)
:=\underbrace{\|K\hat{r}+F(x_0)-y^\delta\|_Y^p+\alpha\mathcal{R}(\hat{r})}_{=:J_\alpha(\hat{r})}
+\underbrace{\beta\|r(x)-\hat{r}\|_{\widetilde{X}}^b + \mathcal{P}(x)}_{=:J_\beta(\hat{r},x)} 
\end{aligned}
\end{equation}
with some $p,b\in[1,\infty)$ and some proper 
%convex 
regularization functional $\mathcal{R}:\widetilde{X}\to[0,\infty]$. 
In the particular case of exact penalization, that is $b=1$ and $\beta$ sufficiently large (but finite) so that  
$r(x)-\hat{r}=0$ is enforced, this reduces to Tikhonov regu\revision{l}arization
\Margin{R2 10.}
\begin{equation}\label{Tikh}
J_\alpha^{\delta\,\textup{Tikh}}(x):=\|F(x)-y^\delta\|_Y^p+\alpha\tilde{\mathcal{R}}(x)+\mathcal{P}(x)
\end{equation}
with $\tilde{\mathcal{R}}(x)=\mathcal{R}(r(x))$.

Existence of a (global) minimizer in \eqref{var} can be shown by the direct method of calculus of variations, see, e.g.,  \cite[Theorem 3.1]{HKPS:2007} provided topologies $\mathcal{T}$, $\tilde{\mathcal{T}}$, $\mathcal{T}_Y$ exists on $U$, $\widetilde{X}$, $Y$ such that
\begin{assumption}\label{ass1}
\begin{itemize}
\item sublevel sets of $J_{\alpha,\beta}^\delta$ are $\tilde{\mathcal{T}}\times\mathcal{T}$ compact;
\item $K$ is $\tilde{\mathcal{T}}$-to-$\mathcal{T}_Y$ continuous and $\|\cdot\|_Y$ is $\mathcal{T}_Y$ lower semicontinuous;
\item $r:U\to\widetilde{X}$ is $\mathcal{T}$ sequentially closed, that is, 
\[ \forall (x_n)_{n\in\mathbb{N}}\subseteq U\,\ \hat{r}\in \widetilde{X}: \ 
\Bigl(x_n\stackrel{\mathcal{T}}{\longrightarrow} x \mbox{ and } r(x_n)\stackrel{\widetilde{X}}{\longrightarrow} \hat{r}\Bigr)
\ \Rightarrow \ r\in U\mbox{ and }r(x)=\hat{r}; \]
\item $\mathcal{P}$ is $\mathcal{T}$ lower semicontinuous;
$\mathcal{R}$ is $\tilde{\mathcal{T}}$ lower semicontinuous;
\end{itemize}
\end{assumption}
These theoretical requirements as well as exact computation of a minimizer can partially be avoided
%\footnote{but some of them return in the convergence analysis, see Assumption~\ref{ass2} below} 
by just relying on existence of an infimum (which trivially holds due to boundedness of the objective function from below by zero) and considering inexact solution of the minimization problem according to 
\begin{equation}\label{var_eta}
\forall (\hat{r},x)\in\widetilde{X}\times U\, : \ J_{\alpha,\beta}^\delta(\hat{r}_{\alpha,\beta,\eta}^\delta,x_{\alpha,\beta,\eta}^\delta) \leq J_{\alpha,\beta}^\delta(\hat{r},x)+\eta
\end{equation}
with $\eta>0$ and $\eta=\eta(\delta)\to0$ as $\delta\to0$.
Comparison with $(r(x^\dagger),x^\dagger)\in \widetilde{X}\times U$ and using \eqref{delta} and $\mathcal{P}(x^\dagger)=0$ yields 
\[
\begin{aligned}
&\|K\hat{r}_{\alpha,\beta,\eta}^\delta+F(x_0)-y^\delta\|_Y^p
+\alpha\mathcal{R}(\hat{r}_{\alpha,\beta,\eta}^\delta)
+\beta\|r(x)-\hat{r}_{\alpha,\beta,\eta}^\delta\|_{\widetilde{X}}^b 
+ \mathcal{P}(x_{\alpha,\beta,\eta}^\delta)\\
&\hspace*{7cm}\leq \delta^p +\alpha\mathcal{R}(r(x^\dagger))+\eta.
\end{aligned}
\]
Assuming that $\alpha=\alpha(\delta)$, $\beta=\beta(\delta)$, $\eta=\eta(\delta)$ are chosen such that 
\begin{equation}\label{alphabetaeta}
\frac{\delta^p}{\alpha}\to0\,, \quad 
\frac{\eta}{\alpha}\to0\,, \quad 
\frac{\delta^p}{\beta}\to0\,, \quad 
\frac{\alpha}{\beta}\to0\,, \quad 
\frac{\eta}{\beta}\to0\,, \quad 
\alpha\to0\,, \quad  
\eta\to0 \quad 
\mbox{ as } \delta\to0,  
\end{equation}
we therefore obtain for $\hat{r}^\delta:=\hat{r}_{\alpha(\delta),\beta(\delta),\eta(\delta)}^\delta$, $x^\delta:=x_{\alpha(\delta),\beta(\delta),\eta(\delta)}^\delta$, 
\[
\begin{aligned}
(a)&\limsup_{\delta\to0} \mathcal{R}(\hat{r}^\delta) \leq \mathcal{R}(r(x^\dagger))\\
(b)&\limsup_{\delta\to0} \|r(x^\delta)-\hat{r}^\delta\|_{\widetilde{X}}\ =0\\
(c)&\limsup_{\delta\to0} \|K\hat{r}^\delta+F(x_0)-y^\delta\|_Y\ =0\\
(d)&\limsup_{\delta\to0} \mathcal{P}(x^\delta)\ =0
\end{aligned}
\]

This yields convergence provided topologies $\mathcal{T}$, $\tilde{\mathcal{T}}$, $\mathcal{T}_Y$ exists on $U$, $\widetilde{X}$, $Y$ such that for arbitrary sequences $(\hat{r}_n)_{n\in\mathbb{N}}\subseteq\widetilde{X}$
\begin{assumption}\label{ass2}
\begin{itemize}
\item[(i)] 
%$(\hat{r}_n)_{n\in\mathbb{N}}$ bounded \ $\Rightarrow$ \ $\exists (\hat{r}_{n_k})_{k\in\mathbb{N}},\, \hat{r}\in\widetilde{X}: \ \hat{r}_{n_k}\stackrel{\tilde{\mathcal{T}}}{\longrightarrow} \hat{r}$;
sublevel sets of $\mathcal{R}$ are $\tilde{\mathcal{T}}$ compact;
\item[(ii)] $\hat{r}_n\stackrel{\widetilde{X}}{\longrightarrow}0
\ \Rightarrow \ \exists (\hat{r}_{n_k})_{k\in\mathbb{N}}: \ \hat{r}_{n_k}\stackrel{\tilde{\mathcal{T}}}{\longrightarrow} 0$;
\item[(iii)] $K$ is $\tilde{\mathcal{T}}$-to-$\mathcal{T}_Y$ continuous and $\|\cdot\|$ is $\mathcal{T}_Y$ lower semicontinuous;
\item[(iv)] $r(x_n)\stackrel{\tilde{\mathcal{T}}}{\longrightarrow} \hat{r}
\ \Rightarrow \ \exists (x_{n_k})_{k\in\mathbb{N}}\subseteq U, \, x\in U: \ (x_{n_k}\stackrel{\mathcal{T}}{\longrightarrow} x$ and $r(x)=\hat{r})$;
\item[(v)] $\mathcal{P}$ is $\mathcal{T}$ lower semicontinuous.
\end{itemize}
\end{assumption}

\begin{theorem}\label{th:conv_var}
Under condition \eqref{rangeinvar_diff} and Assumption~\ref{ass2}, the approximations $x^\delta$ defined by \eqref{var_eta} with \eqref{alphabetaeta} converge $\mathcal{T}$ subseqentially to a solution of 
\eqref{Fxy} with $\mathcal{P}(x)=0$, 
(or equivalently, of \eqref{FP}), 
that is, every subsequence of  $(\hat{r}^\delta,x^\delta)_{\delta>0}$ has a $\tilde{\mathcal{T}}\times\mathcal{T}$ convergent subsequence and the limit $(\hat{r},x)$ of every $\tilde{\mathcal{T}}\times\mathcal{T}$ convergent subsequence solves 
%\eqref{Fxy}, $\mathcal{P}(x)=0$.
\eqref{FP}. 
If the $\mathcal{R}$ minimizing solution $(\hat{r}^\dagger,x^\dagger)$ of 
%\eqref{Fxy} with $\mathcal{P}(x)=0$ 
\eqref{FP}\footnote{i.e., $(\hat{r}^\dagger,x^\dagger)$ solving \eqref{FP} such that 
$\forall (\hat{r},x)\mbox{ solving \eqref{FP}}\,: \  \mathcal{R}(\hat{r}^\dagger,x^\dagger)\leq \mathcal{R}(\hat{r},x)$}
is unique and $r$ is injective, then $\hat{r}^\delta\stackrel{\tilde{\mathcal{T}}}{\longrightarrow}r(x^\dagger)$ and $x^\delta\stackrel{\mathcal{T}}{\longrightarrow}x^\dagger$.
\end{theorem}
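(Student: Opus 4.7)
The proof plan is a standard direct-method extraction of subsequences from the already-derived limit estimates (a)--(d), paired with the compactness and closure hypotheses in Assumption~\ref{ass2}. The extra difficulty compared with a one-space argument is that $\hat{r}^\delta$ and $x^\delta$ live in different spaces and are only coupled by the misfit $\|r(x^\delta)-\hat{r}^\delta\|_{\widetilde{X}}$ from (b), which is strong in $\widetilde{X}$ but must be matched with compactness holding only in the coarser topology $\tilde{\mathcal{T}}$.

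First I would exploit (a) together with Assumption~\ref{ass2}(i): the $\tilde{\mathcal{T}}$-compactness of sublevel sets of $\mathcal{R}$ applied to the uniformly bounded family $\{\hat{r}^\delta\}$ extracts a subsequence $\hat{r}^{\delta_k}\to\hat{r}$ in $\tilde{\mathcal{T}}$. From (b), $r(x^{\delta_k})-\hat{r}^{\delta_k}\to 0$ in $\widetilde{X}$, so (ii) produces a further subsequence along which this error tends to $0$ in $\tilde{\mathcal{T}}$ as well; since $\tilde{\mathcal{T}}$ respects linear operations, $r(x^{\delta_{k_j}})\to\hat{r}$ in $\tilde{\mathcal{T}}$. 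Invoking (iv) now yields a still further subsequence with $x^{\delta_{k_{j_\ell}}}\to x$ in $\mathcal{T}$ and $r(x)=\hat{r}$.

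To identify $(\hat{r},x)$ as a solution of \eqref{FP}, I pass to the limit in each of the three equations. For the operator equation, (iii) gives $K\hat{r}^{\delta_{k_{j_\ell}}}\to K\hat{r}$ in $\mathcal{T}_Y$; combined with (c), the noise bound \eqref{delta}, and $\mathcal{T}_Y$-lower semicontinuity of $\|\cdot\|_Y$, this forces $K\hat{r}=y-F(x_0)$. The coupling $r(x)=\hat{r}$ is already in hand, and (v) combined with (d) gives $\mathcal{P}(x)\le\liminf\mathcal{P}(x^{\delta_{k_{j_\ell}}})=0$. Finally, \eqref{rangeinvar_diff} turns $K r(x)=y-F(x_0)$ into $F(x)=y$, so $x$ solves \eqref{Fxy}.

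For the final uniqueness claim, $\tilde{\mathcal{T}}$-lower semicontinuity of $\mathcal{R}$ (implicit in the compactness of its sublevel sets in a Hausdorff $\tilde{\mathcal{T}}$) together with (a), applied with the $\mathcal{R}$-minimizing solution itself as comparison exact solution, yields $\mathcal{R}(\hat{r})\le\mathcal{R}(\hat{r}^\dagger)$; since $(\hat{r},x)$ solves \eqref{FP}, uniqueness of the $\mathcal{R}$-minimizer forces $\hat{r}=\hat{r}^\dagger$, and then by injectivity of $r$ also $x=x^\dagger$. Since this identification of the limit is independent of the extracted subsequence, the standard ``every subsequence admits a sub-subsequence with the same limit'' argument upgrades subsequential to full convergence. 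The principal technical hurdle is coordinating the three topologies: assumption (ii) is precisely what lets the strong $\widetilde{X}$-coupling in (b) be transferred into $\tilde{\mathcal{T}}$, so that the compactness from (i) and the closure from (iv) can be chained on a common subsequence where all limits are compatible via $r(x)=\hat{r}$.
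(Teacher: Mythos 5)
Your proposal follows essentially the same argument as the paper's proof: extract a $\tilde{\mathcal{T}}$-convergent subsequence of $\hat{r}^\delta$ from (a) and (i), transfer this to $r(x^\delta)$ via (b) and (ii), obtain the $\mathcal{T}$-limit $x$ with $r(x)=\hat{r}$ from (iv), identify the limit as a solution of \eqref{FP} via (c), (iii) and (d), (v), and conclude full convergence under uniqueness by a subsequence-of-subsequences argument with injectivity of $r$. Your additional remarks (linearity of the limit transfer in step two, lower semicontinuity of $\mathcal{R}$ being implied by compact sublevel sets in a Hausdorff topology for the identification of the $\mathcal{R}$-minimizing limit) only flesh out details the paper leaves implicit.
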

\begin{proof}
(a) and (i) imply that $\hat{r}^\delta$ has a $\tilde{\mathcal{T}}$ convergent subsequence. For any such $\tilde{\mathcal{T}}$ convergent subsequence 
$\hat{r}^{\delta_n}\stackrel{\tilde{\mathcal{T}}}{\longrightarrow}\hat{r}$, 
by (b), (ii) we also have 
$r(x^{\delta_n})\stackrel{\tilde{\mathcal{T}}}{\longrightarrow}\hat{r}$, 
which by (iv) implies existence of a sub-subsequence $(x^{\delta_{n_k}})_{k\in\mathbb{N}}\subseteq U$ and an element $x\in U$ such that 
$x^{\delta_{n_k}}\stackrel{\mathcal{T}}{\longrightarrow}x$ and $r(x)=\hat{r}$.
From (c), (iii) we conclude that $Kr(x)=y-F(x)$ and from (d), (v) $\mathcal{P}(x)=0$.
In case of uniqueness, a subsequence-subsequence argument together with injectivity of $r$ implies convergence of the whole family $(\hat{r}^\delta,x^\delta)_{\delta\in(0,\bar{\delta})}$ as $\delta\to0$.
\end{proof}
\begin{remark}\label{rem:altmin}
Computing a 
%global
minimizer to \eqref{var} with $b>1$ is easier than finding a 
%global 
minimizer to the classical Tikhonov functional \eqref{Tikh}, since in $J_{\alpha,\beta}^\delta$ the first part $J_\alpha^\delta$ is convex due to linearity of $K$ and so is usually the function $\mathcal{P}$. 
The nonlinear term $\|r(x)-\hat{r}\|_{\widetilde{X}}^b$ is convex with respect to $\hat{r}$ and also with respect to $x$ in a neighborhood of $(\hat{r},x)=(r(x^\dagger),x^\dagger)$,\footnote{uniformly so in case of a uniformly convex norm $\|\cdot\|_{\widetilde{X}}$ and $b>1$}  
whereas the same holds for $\|F(x)-y^\delta\|_Y^p$ only under some additional restriction on the nonlinearity of the forward operator $F$ -- the already mentioned weak nonlinearity condition from \cite{ChaventKunisch:1996}, that is related to the tangential cone condition \eqref{tangcone}.
%
%{Numerical approximation of a minimizer to \eqref{var_eta} can, e.g, be done by a splitting method such as ADMM, Chambolle-Pock. Concergence for compact $K$? Valkonen?}
Thus, numerical approximation of a minimizer to \eqref{var} can, e.g, be achieved by a Gauss-Seidel (also called alternating minimization or coordinate descent) method based on alternatingly minimizing $J_\alpha(\hat{r})$ with respect to $\hat{r}$ and then, for fixed $\hat{r}$, minimizing $J_\beta(\hat{r},x)$ with respect to $x$, see, e.g.,
\cite{Auslender:1971,BertsekasTsitsiklis:1989,GrippoSciandrone:1999,OrtegaReinboldt:1970,LuoTseng:1992}.
%{check convergence conditions from these references (or others extending to Banach/Hilbert spaces, cf Radu Bot)} 
\end{remark}
\Margin{R1 6.}

\subsection{Iterative reconstruction}\label{sec:iterative}

\revision{We exemplarily describe and analyze a simple regularized Newton method here and provide an analysis of two further (somewhat more sophisticated) Newton type methods in Appendix~\ref{Appendix:Newton}.}

\subsubsection{A frozen Newton method}\label{subsec:frozenNewton}
Based on the operator $K$ in \eqref{FP} with $r(x)\approx x-x_0$ and \eqref{rangeinvar_diff} with $\widetilde{X}=X$ we can define a frozen Newton iteration by 
\begin{equation}\label{frozenNewton}
x_{n+1}^\delta \in \mbox{argmin}_{x\in U}
\|K(x-x_n^\delta)+F(x_n^\delta)-y^\delta\|_Y^p+\alpha_n\mathcal{R}(x)+\mathcal{P}(x).
\end{equation}
We specify the setting to the case of Hilbert spaces $X$ and $Y$, assume $p=2$, $\mathcal{R}(x)=\|x-x_0\|_X^2$ and $\mathcal{P}$ to be defined by a linear operator $P\in L(X,Z)$ for another Hilbert space $Z$, $\mathcal{P}(x)=\|Px\|_Z^2$. This allows to explicitely write the minimizer as 
\begin{equation}\label{frozenNewtonHilbert}
x_{n+1}^\delta=x_{n}^\delta+(K^\star K+P^\star P+\alpha_n \textup{id})^{-1}
\Bigl(K^\star (y^\delta-F(x_n^\delta))-P^\star Px_n^\delta+\alpha_n(x_0-x_n^\delta)\Bigr),
\end{equation}
where ${}^\star $ denotes the Hilbert space adjoint.
Using \eqref{rangeinvar_diff}, which implies 
$F(x_n^\delta)=F(x_0)+Kr(x_n)=y+K(r(x_n)-r(x^\dagger))$,
and the fact that $P^\star Px^\dagger=0$, we can express the recursion for the error as
\[
\begin{aligned}
x_{n+1}^\delta-x^\dagger=&(K^\star K+P^\star P+\alpha_n \textup{id})^{-1}\\
&\Bigl(K^\star (y^\delta-y)+K^\star K\bigl((r(x^\dagger)-r(x_n^\delta))-(x^\dagger-x_n^\delta)\bigr)+\alpha_n(x_0-x^\dagger)\Bigr).
\end{aligned}
\] 
\Margin{R1 6.}
Assuming $r$ to be close to the identity (corresponding to the assumption $\|R(x)-\textup{id}\|\leq c<1$ or $\|R(x)-\textup{id}\|\leq C\|x-x^\dagger\|$ usually made in the context of \eqref{rangeinvar}, cf., e.g., \cite{DeEnSc98,Broyden,NewtonKaczmarz})
\begin{equation}\label{rid}
\exists c\in(0,1)\, \forall x\in U\, : \ \|(r(x^\dagger)-r(x))-(x^\dagger-x)\|_X\leq c\|x^\dagger-x\|_X
\end{equation}
and using spectral calculus for the selfadjoint nonnegative linear operator $A:=K^\star K+P^\star P$ we obtain
\begin{equation}\label{est_err_frozenN}
\|x_{n+1}^\delta-x^\dagger\|_X\leq \frac{\delta}{\sqrt{\alpha_n}} + c\|x_n^\delta-x^\dagger\|_X + \revision{a_n(P)}
\end{equation}
with 
\begin{equation}\label{anto0}
\revision{a_n(P)}=\alpha_n\|(K^\star K+P^\star P+\alpha_n \textup{id})^{-1}(x_0-x^\dagger)\|_X\to0\mbox{ as }n\to\infty
\end{equation}
provided $x_0-x^\dagger\in (\nsp(K)\cap \nsp(P))^\bot\subseteq \nsp(A)^\bot$,
which follows from the implication 
\[
(K^\star K+P^\star P)v=0 \ \Rightarrow \ 0=\langle v,(K^\star K+P^\star P)v\rangle_X =\|Kv\|_Y^2+\|Pv\|_Z^2.
\]
In \eqref{est_err_frozenN} we have used the following estimates, 
\revision{whose proof can be found in Appendix~\ref{Appendix:Lemmas}.}
\begin{lemma}\label{spectralbounds}
For $K\in L(X,Y)$, $P\in L(X,Z)$ with $X,Y,Z$ Hilbert spaces satisfying 
\begin{equation}\label{NKNP}
\textup{(a) }\nsp(K)^\bot \subseteq \nsp(P)
\textup{ or \ (b) }P^\star P(K^\star K)^{1/2}=(K^\star K)^{1/2}P^\star P
\end{equation} 
and any $\alpha>0$, the estimates
\begin{equation}\label{KPalpha}
\begin{aligned}
&\|(K^\star K+P^\star P+\alpha \textup{id})^{-1}K^\star K\|\leq C\\
&\|(K^\star K+P^\star P+\alpha \textup{id})^{-1}K^\star \|=\|K(K^\star K+P^\star P+\alpha \textup{id})^{-1}\|\leq\sqrt{\frac{C}{\alpha}},
\end{aligned}
\end{equation} 
hold with $C=1$ in case (a) and $C=2$ in case (b).
\end{lemma}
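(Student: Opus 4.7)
My plan is to set $B := K^\star K$ and $Q := P^\star P$, both bounded, self-adjoint and nonnegative on $X$, and to reduce the two operator bounds in \eqref{KPalpha} to scalar supremum estimates obtained from a joint spectral calculus for the commuting pair $(B,Q)$. The first step is to verify commutativity in both settings. In case (a), $\nsp(K)^\bot\subseteq\nsp(P)$ translates to $PK^\star=0$ and by adjoining $KP^\star=0$; this gives $BQ=K^\star(KP^\star)P=0$ and $QB=0$, so the joint spectral measure is actually supported on the coordinate axes $\{bq=0\}\subseteq[0,\infty)^2$. In case (b), commutativity of $B^{1/2}$ with $Q$ immediately yields commutativity of $B=(B^{1/2})^2$ with $Q$ by the direct calculation $BQ=B^{1/2}B^{1/2}Q=B^{1/2}QB^{1/2}=QB$.

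For the first inequality in \eqref{KPalpha} I intend to use the algebraic identity
\begin{equation*}
(B+Q+\alpha\,\textup{id})^{-1}B = \textup{id} - (B+Q+\alpha\,\textup{id})^{-1}(Q+\alpha\,\textup{id})
\end{equation*}
together with the trivial spectral bound $(q+\alpha)/(b+q+\alpha)\leq 1$ and the triangle inequality to obtain $C=2$ in case (b); in case (a), the axis constraint $bq=0$ makes the direct scalar bound $b/(b+q+\alpha)\leq 1$ the natural route, giving the sharper $C=1$ without invoking the identity.

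For the second inequality I compute
\[
\|(B+Q+\alpha\,\textup{id})^{-1}K^\star\|^2 = \|(B+Q+\alpha\,\textup{id})^{-1}B(B+Q+\alpha\,\textup{id})^{-1}\|
\]
and apply the joint functional calculus to the scalar symbol $b/(b+q+\alpha)^2$; maximizing in $q\geq 0$ at $q=0$ and then in $b\geq 0$ at $b=\alpha$ yields $1/(4\alpha)$, comfortably within $C/\alpha$ in both cases. The equality of the two operator norms is just Hilbert-space duality, $\|A^\star\|=\|A\|$.

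The genuinely delicate step -- and the one I expect to need the most care to state cleanly -- is justifying the joint spectral calculus in case (b), where one needs strong commutativity of the bounded self-adjoint pair $(B,Q)$ (guaranteeing a joint spectral measure on $\sigma(B)\times\sigma(Q)$). This is a standard consequence of bounded commuting self-adjoint operators, but worth noting explicitly since otherwise the scalar supremum bounds above cannot be directly read off. Everything else amounts to algebraic manipulation and elementary scalar optimization.
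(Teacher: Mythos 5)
Your proof is correct, but it follows a genuinely different route from the paper's. You establish that $B=K^\star K$ and $Q=P^\star P$ commute in both cases (in case (a) via $PK^\star=0$, which even gives $BQ=QB=0$; in case (b) by squaring the assumed commutation with $(K^\star K)^{1/2}$) and then invoke the joint spectral measure for the commuting self-adjoint pair, reducing both estimates to elementary scalar suprema such as $b/(b+q+\alpha)\leq 1$ and $b/(b+q+\alpha)^2\leq 1/(4\alpha)$. The paper avoids the joint spectral theorem altogether: in case (a) it uses the projection onto $\nsp(K)^\bot$ and the identity $K^\star K\,\textup{Proj}_{\nsp(K)^\bot}=(K^\star K+P^\star P)\,\textup{Proj}_{\nsp(K)^\bot}$ together with single-operator spectral calculus; in case (b) it interprets $z=(B+Q+\alpha\,\textup{id})^{-1}Bv$ as the minimizer of the Tikhonov-type functional $\|(K^\star K)^{1/2}(\tilde z-v)\|^2+\|(P^\star P)^{1/2}\tilde z\|^2+\alpha\|\tilde z\|^2$, compares objective values at $\tilde z=z$ and $\tilde z=0$, and concludes by a density argument on $\rng((K^\star K)^{1/2})$; the second estimate then follows in both cases from $\langle(B+Q+\alpha\,\textup{id})^{-1}v,\,B(B+Q+\alpha\,\textup{id})^{-1}v\rangle\leq\frac{C}{\alpha}\|v\|^2$. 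Your approach buys sharper constants essentially for free (the symbol bound gives $C=1$ in case (b) as well, and $1/(4\alpha)$ rather than $C/\alpha$ in the second estimate) at the cost of invoking the joint spectral theorem for commuting bounded self-adjoint operators, which — as you rightly flag — is the one step that must be stated carefully; the paper's argument is longer but uses only single-operator functional calculus and an elementary energy comparison. Both are complete proofs of the stated bounds.
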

\begin{theorem}\label{thm:convfrozenNewton}
Let $x_0\in U:=\mathcal{B}_\rho(x^\dagger)$ for some $\rho>0$ sufficiently small and let 
$x_0-x^\dagger\in \bigl(\nsp(K)\cap \nsp(P)\bigr)^\bot$ and \eqref{NKNP} hold.
Assume that $F$ satisfies \eqref{rangeinvar_diff} with $r$ G\^{a}teaux differentiable, 
satisfying \eqref{rid}.
Let the stopping index $n_*=n_*(\delta)$ be chosen such that 
\begin{equation}\label{nstar}
n_*(\delta)\to0, \quad \delta\sum_{j=0}^{n_*(\delta)-1}c^j\alpha_{n_*(\delta)-j-1}^{-1/2} \to 0 \qquad \textup{ as }\delta\to0
\end{equation}
with $c$ as in \eqref{rid}.

Then the iterates $(x_n^\delta)_{n\in\{1,\ldots,n_*(\delta)\}}$ are well-defined by \eqref{frozenNewtonHilbert}, remain in $\mathcal{B}_\rho(x^\dagger)$ and converge in $X$, $\|x_{n_*(\delta)}^\delta-x^\dagger\|_X\to0$ as $\delta\to0$. In the noise free case $\delta=0$, $n_*(\delta)=\infty$ we have $\|x_n-x^\dagger\|_X\to0$ as $n\to\infty$.
\end{theorem}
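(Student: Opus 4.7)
The plan is to iterate the one-step error bound \eqref{est_err_frozenN} and extract convergence by a standard Tikhonov-type argument that separates approximation and noise contributions. I would proceed in three phases.

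Phase 1 (well-definedness and inductive setup): The closed form \eqref{frozenNewtonHilbert} uniquely defines $x_{n+1}^\delta$, since $\alpha_n>0$ makes $K^\star K+P^\star P+\alpha_n\textup{id}$ a boundedly invertible, self-adjoint, positive operator. The derivation leading to \eqref{est_err_frozenN} (which uses \eqref{rangeinvar_diff}, \eqref{rid}, and Lemma~\ref{spectralbounds}) is valid only as long as $x_n^\delta\in U$, so I would carry the induction on $n$ jointly with the error estimate, using \eqref{NKNP} to invoke Lemma~\ref{spectralbounds} and the assumption $x_0-x^\dagger\in(\nsp(K)\cap\nsp(P))^\bot$ to guarantee $a_n(P)\to0$ via \eqref{anto0}.

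Phase 2 (iterated bound and noise-free convergence): Unrolling \eqref{est_err_frozenN} gives
\begin{equation*}
\|x_n^\delta-x^\dagger\|_X \leq c^n\|x_0-x^\dagger\|_X + \sum_{j=0}^{n-1} c^{n-1-j}\Bigl(\tfrac{\delta}{\sqrt{\alpha_j}} + a_j(P)\Bigr).
\end{equation*}
In the noise-free case ($\delta=0$, $n_*=\infty$) the first term vanishes geometrically, while the convolution $\sum_{j=0}^{n-1} c^{n-1-j} a_j(P)\to 0$ by a standard null-sequence argument: given $\epsilon>0$, choose $N$ with $a_j(P)<\epsilon(1-c)/2$ for $j\geq N$ (possible by \eqref{anto0}), then the tail is bounded by $\epsilon/2$ and the fixed initial segment decays like $c^{n-N}$.

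Phase 3 (noisy convergence via the stopping rule): Applying the iterated bound at $n=n_*(\delta)$, the noise contribution equals $\delta\sum_{k=0}^{n_*-1}c^k\alpha_{n_*-k-1}^{-1/2}$ after reindexing $k=n_*-1-j$, which tends to zero exactly by \eqref{nstar}; the approximation contribution vanishes as in Phase 2 since $n_*(\delta)\to\infty$. The main obstacle is the inductive step ensuring $x_n^\delta\in\mathcal{B}_\rho(x^\dagger)$ throughout $n\leq n_*(\delta)$, which is a genuine prerequisite for \eqref{rid}. Since $a_n(P)\leq\|x_0-x^\dagger\|_X$ by spectral calculus (with monotonicity of $\alpha\mapsto\alpha(A+\alpha\textup{id})^{-1}$ in $\alpha$ allowing a uniform bound over $n$) and the stopping rule forces the noise sum to be small for small $\delta$, one closes the induction by choosing $\rho$ small enough and the initial error $\|x_0-x^\dagger\|_X$ strictly inside $\mathcal{B}_\rho(x^\dagger)$ with enough slack to absorb the two contributions; making this slack bookkeeping consistent with the schedule $\alpha_n$ and the stopping index $n_*(\delta)$ is where the technical care lies.
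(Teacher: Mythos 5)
Your proposal is correct and follows essentially the same route as the paper: unroll the one-step estimate \eqref{est_err_frozenN} into the geometric recursion, observe that the discrete convolution $\sum_{j}c^{j}a_{n-j}$ of a geometric sequence with the null sequence from \eqref{anto0} tends to zero, and invoke the stopping rule \eqref{nstar} for the noise term. Your Phase 1/Phase 3 bookkeeping of the induction keeping $x_n^\delta\in\mathcal{B}_\rho(x^\dagger)$ (needed to apply \eqref{rid} at each step) is a point the paper's proof leaves implicit, and your treatment of it is sound.
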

\begin{proof}
From estimate \eqref{est_err_frozenN} we deduce the recursion
\[
\|x_{n+1}^\delta-x^\dagger\|\leq \delta\sum_{j=0}^nc^j\alpha_{n-j}^{-1/2} + c^{n+1}\|x_0^\delta-x^\dagger\|_X + \sum_{j=0}^nc^ja_{n-j},
\]
where by \eqref{anto0}, $\sum_{j=0}^nc^ja_{n-j}\to0$ as $n\to\infty$.
The choice of $n_*(\delta)$ according to \eqref{nstar} thus yields the assertions.
\end{proof}

\section{A class of examples}\label{sec:exclass}
We now discuss a framework in which range invariance \eqref{rangeinvar_diff} can be verified.
It will allow for considering multiple coefficients $q=(q^{(0)},\ldots,q^{(I)})\in Q=\prod_{i=0}^I Q^{(i)}$, $I\in\mathbb{N}_0$. This setting turns out to even enable verification of \eqref{rangeinvar_diff} in the problem of identifying a diffusion coefficient in higher space dimensions from boundary data (for which verification of range invariance and also of the tangetial cone condition so far only succeeded in one space dimension) as long as it is to be identified together with another coefficient, see Example~\ref{ex:diffabs} below.

On the other hand, we also consider multiple equations in the sense of several experiments carried out, which lead to states $u=(u^j)_{j\in J}$ with corresponding observations $C^ju^j$.

We thus study the following general model equation
\begin{equation}\label{DUBquf}
\begin{aligned}
&%A(q,u):=
(\revision{D^ju^j}+\sum_{i=0}^I B^{i,j}(u^j)q^{(i)}-f^j)_{j\in J}=0\\
&\textup{where }q=(q^{(i)})_{i\in\{0,\ldots,I\}}, \quad u:=(u^j)_{j\in J}, 
\end{aligned}
\end{equation} 
\Margin{R2 12.}
where the operators $\revision{D=\textup{diag}((D^j)_{j\in J})}\in L(V_J,W^*_J )$, $V_J\subseteq L^2_\mu(J,V)$, $W^*_J\subseteq L^2_\mu(J,W^*)$, $B^{i,j}(u^j)\in L(Q^{(i)}, W^* )$, for any $u^j\in V$, $f^j\in W^* $ are given inhomogeneities, $\mu$ is a measure on $J$, and $L^2_\mu(J,V)$, $L^2_\mu(J,W^*)$ are the corresponding Bochner-Sobolev spaces.
In here, $D$ will typically be a differential operator, possibly equipped with homogeneous boundary and/or initial conditions, and $B^{(i)}(u^j)q^{(i)}$ are terms in which the unknown parameters $q^{(i)}$ appear as coefficients or unknown source terms (the latter corresponding to $B^{(i)}(u^j)$ being constant in $u^j$).
\footnote{In principle, also the spaces $V$ and $W^*$ could depend on $j$ but we do not consider this in order not overly complicate notation.}

Note that we restrict ourselves to model operators that are (affinely) linear with respect to the coeffcients $q$, but not necessarily linear with respect to the state $u$. We will see in Section~\ref{sec:ex} below that this comprises the examples from the introduction as well as several others.

The observations are defined by the equations
\begin{equation}\label{Cuy}
C^ju^j=y^j, \quad C^j\in L(V,Y^j), \quad j\in J,
\end{equation} 
with some arbitrary operators $C^j$; other than linearity and boundedness, we will not impose further conditions on them in the following.

Clearly, more general choices also of the index set $\{0,\ldots,I\}$ for the parameters are possible in principle. While they might be explored for particular applications in future work, in this paper we stay with the setting of finitely many parameters (each of which can be an infinite dimensional quantity).

\medskip

In order to achieve range invariance, we will -- similarly to Example~\ref{ex:pot_trans} \revision{--} 
\Margin{R2 13}
introduce an artificial dependence of one of the parameters on $j$, that is, for some fixed index $i_0\in\{0,\ldots,I\}$ 
\revision{(to be chosen such that the corresponding operator $(B^{i_0,j}(u_0^j))_{j\in J}$ is an isomorphism) see \eqref{B0reg}) below}, 
\Margin{R2 15.}
we consider $q^{(i_0)}=(q^{i_0,j})_{j\in J}$. 
To recover the original independence of $q^{(i_0)}=(q^{i_0j})_{j\in J}$ on $j$ 
\begin{comment}
we incorporate the seminorm \\
in case of a finite measure $\mu$ on $J$;
$f\mapsto |f^+-\lim_{n\to\infty}\frac{1}{\mu(J_n)}\int_{J_n} f^+ \,d\mu|
+|f^--\lim_{n\to\infty}\frac{1}{\mu(J_n)}\int_{J_n} f^- \,d\mu|$
with $(J_n)$ a sequence of measurable subsets of $J$ such that $\mu(J_n)>\infty$, $J_n\nearrow J$ in case of a $\sigma$-finite measure $\mu$. 
\begin{equation}\label{Hsmu} 
|q^{(i_0)}|_{H^s_\mu(J;Q^{(i_0)})}^2:=\int_J\int_J\frac{\|q^{i_0,j}-q^{i_0,\ell}\|_{Q^{(i_0)}}^2}{|j-\ell|^{2s+m}}\, d\mu(\ell)\,d\mu(j) 
\end{equation}
with some $s>0$ into the penalty $\mathcal{P}$.
Here $\mu$ (as in the definition of the $L^2_\mu$ spaces above) is a measure on $J$; in case of a discrete index set $J\subseteq\mathbb{N}$, it will be a discrete measure and the integrals become sums. 
The exponent $m$ in the denominator of \eqref{Hsmu} takes care of the dimension of $J$,
\revision{The exponent $m$ in the denominator of \eqref{Hsmu} depends on the dimension of $J\subseteq\mathbb{R}^m$,} 
\Margin{R2 14.}
as usual in fractional Sobolov space seminorms.
Therewith (up to possible further terms), $\mathcal{P}$ takes the form $\mathcal{P}(q):=\|Pq\|_Z^2$ with $Z=L^2_\mu(J;Q^{(i_0)})$, $(Pq)(j)=\int_J\frac{|q(j)-q(\ell)|^2}{|j-\ell|^{2s+m}}\, d\mu(\ell) f_\mu(j)$ with $f_\mu$ being a density function of $\mu$, as required in Section \ref{sec:iterative}.
\end{comment}
%\textcolor{red}{Use $L^2_\mu$ in place of $H^s_\mu$} 
\revision{
we use the penalty term $\mathcal{P}(q):=\|Pq\|_{L^2_\mu(J;Q^{(i_0)})}$, where $P:L^2(J;Q^{(i_0)})\to L^2(J;Q^{(i_0)})$, 
$q\mapsto q-\frac{1}{\int_J w\,d\mu}\int_J w\, q \,d\mu $ for some weight function $w>0$, $w\in L^1_\mu(J)\cap L^2_\mu(J)$ (so that for any $v\in L^2_\mu(J)$, $\frac{1}{\int_J w\,d\mu}\int_J w\, v \,d\mu$ is the projection of $v$ on the constants in the weighted $L^2$ space with weight $w$).
}

Correspondingly, one of the parameter spaces is lifted to be the  
%Sobolev Bochner space 
%$H^s_\mu(J;Q^{(i_0)})$ induced by the norm 
%$\|q^{(i_0)}\|_{H^s_\mu(J;Q^{(i_0)})}:=\sqrt{|q^{(i_0)}|_{H^s_\mu(J;Q^{(i)})}^2+\|q^{(i_0)}\|_{L^2_\mu(J;Q^{(i)})}^2}$.
\revision{Bochner space $L^2_\mu(J;Q^{(i_0)})$.}
To simplify notation, we continue using the same symbol for the original parameter $q^{(i_0)}\in Q^{(i_0)}$ and its $j$ dependent extension 
$q^{(i_0)}\in 
%H^s_\mu(J;Q^{(i_0)})
\revision{L^2_\mu}(J;Q^{(i_0)})
=:Q^{(i_0)}_J$.

This covers some interesting cases such as time or frequency dependent PDEs with $J=(0,T)$, $J=(\underline{\omega},\overline{\omega})$ or Neumann-to-Dirichlet observations for elliptic PDEs with $J=\mathbb{N}$, see Examples~\ref{ex:pot_revisited}, \ref{ex:diffabs} below.

\medskip

In order to verify the range invariance condition \eqref{rangeinvar_diff}, throughout this section we will make the assumption that the extended version of one of the operators $B^{(i_0)}(u_0)$ is an isomorphism between $W^*_J$ and  some auxiliary space 
$\widetilde{Q}^{(i_0)}_J:=
%H^s_\mu(J;\widetilde{Q}^{(i_0)})
L^2_\mu(J;\widetilde{Q}^{(i_0)})
$:
\begin{equation}\label{B0reg}
\begin{aligned}
\exists i_0\in\{0,\ldots,I\}\, :&\ B_0:=B^{(i_0)}(u_0)
:=(B^{i_0,j}(u_0^j))_{j\in J} 
\textup{ bijective and } \\
&B^{(i_0)}(u_0)\in L(\widetilde{Q}^{(i_0)}_J,W^*_J), \quad
B^{(i_0)}(u_0)^{-1}\in L(W^*_J,\widetilde{Q}^{(i_0)}_J);
\end{aligned}
\end{equation}
here $u_0:=(S(q_0))$ is the solution to \eqref{DUBqjuf} in the reduced case \eqref{red} and arbitrary in the all-at-once case \eqref{aao}.
In particular, \eqref{B0reg} can always be achieved 
by defining $\widetilde{Q}^{(i_0)}_J:=(B^{(i_0)}(u_0))^{-1}(W^*_J)$ as the preimage of $W^*_J$ under the operator $B^{(i_0)}(u_0)$ with the norm
%in case $W^*_J=Q^{(i_0)}_J$ by setting
\begin{equation}\label{Qtil}
\|\hat{r}^{(i_0)}\|_{\widetilde{Q}^{(i_0)}_J}:=\|B^{(i_0)}(u_0)\hat{r}^{(i_0)}\|_{W^*_J}
%=\|B^{(i_0)}(u_0)\hat{r}^{(i_0)}\|_{Q^{(i_0)}_J},
\end{equation}
%\footnote{which can only work if $B^{(i_0)}(u_0)$ does not contain any differentiation of $q^{(i_0)}$, thus not for Example~\ref{ex:diff_ell}.}
Indeed, this possibly weaker norm still enables boundedness of $K=\mathbf{F}(q_0)$ or $K=\mathbb{F}(q_0,u_0)$, respectively.
By an appropriate numbering of the unknowns $q^{(i)}$, without loss of generality we assume $i_0=0$. 

\smallskip

With the extended parameter $q^{(0)}=q^{(i_0)}$, equation \eqref{DUBquf} becomes
\begin{equation}\label{DUBqjuf}
\begin{aligned}
&A(q,u):=
%(A^j(q,u^j))_{j\in J}:=
(\revision{D^ju^j}+B^{0,j}(u^j)q^{0,j}+\sum_{i=1}^I B^{i,j}(u^j)q^{(i)}-f^j))_{j\in J}\\
&\qquad\quad=Du+B^{(0)}(u)q^{(0)}+\sum_{i=1}^I B^{(i)}(u)q^{(i)}-f=Du+B(u)q=0
\end{aligned}
\end{equation} 
where
\[
\begin{aligned}
&B^{(0)}(u)q^{(0)}:=(B^{0,j}(u^j)q^{0,j})_{j\in J}, \quad 
B^{(i)}(u)q^{(i)}:=(B^{i,j}(u^j)q^{(i)})_{j\in J},\ i\in\{1,\ldots,I\}\\
&q=(q^{(i)})_{i\in\{0,\ldots,I\}}, \quad u:=(u^j)_{j\in J}, \quad f:=(f^j)_{j\in J}.
\end{aligned}
\]
The single experiment case is still covered by setting $J=\{0\}$, $Q^{(i)}_J=Q^{(i)}$ and the single coefficient case by setting $I=0$.

\revision{
\begin{example}[Example~\ref{ex:pot_trans} revisited]\label{ex:pot_par_revisited1} 
\normalfont
We slightly extend Example~\ref{ex:pot_trans} to the reconstruction of multiple potentials $q^{(0)},\ldots q^{(I)}$ in 
\[
D_t^M u-\Delta_N u + \sum_{i=0}^{I}q^{(i)}\cdot \Phi^{(i)}(u) = 0,
\]
where $\Phi^{(1)}$,\ldots, $\Phi^{(I)}$ are smooth real functions. 
This is clearly covered by the framework of this section with 
\[
\begin{aligned}
& j=t, \quad J=(0,T), \quad u^j=u(t) \ \in L^2(\Omega),\\
&(Du)^j=D_t^Mu(t)-\Delta_Nu(t), \quad 
%B^{(0)}(u)q^{(0)}=q^{(0)}\cdot u, \quad 
B^{(i)}(u^j)q^{(i)}=q^{(i)}\cdot \Phi^{(i)}(u)\\
%&V=L^2(0,T;H^2(\Omega))\cap H_\diamondsuit^1(0,T;L^2(\Omega)), 
%\quad W_J^*=L^2(0,T;L^2(\Omega)),\\ 
&J=(0,T), \quad Q^{(0)}_J= L^2(0,T;L^2(\Omega)), \quad Q^{(i)}=L^2(\Omega), \ i\in\{1,\ldots,I\}
\end{aligned}
\]
provided there exists $c>0$ such that $|\Phi^{(0)}(u_0)(x,t)|\geq c>0$ for all $(x,t)\in\Omega\times(0,T)$ so that the multiplication operator $B_0=B^{(0)}(u_0):Q^{(0)}_J\to W_J^*$, $q^{(0)}\mapsto \Phi(u_0)\cdot q^{(0)}$ is an isomorphism.
This works out, e.g., for the strongly damped wave equation example
\[
u_{tt}-\Delta_N u - b\Delta_N u_t 
+ \sum_{i=0}^{I}q^{(i)}\cdot \Phi^{(i)}(u) = 0
\]
with $b>0$ in the function space setting (cf. Appendix~\ref{Appendix:IntroEx})
\[
V=L^\infty(0,T;H^2(\Omega))\cap W^{1,\infty}(0,T;H^1(\Omega))\cap H_\diamondsuit^2(0,T;L^2(\Omega)), \quad W^*=L^2(0,T;L^2(\Omega)).
\]
\end{example}
Identification of multiple coefficients that are of much more different nature, namely a diffusion and an absorption coefficient, will be considered in Example~\ref{ex:diffabs};
see also \cite{nonlinearity_imaging_both} for another such two-coefficient identification problem.
}
\Margin{R2 15.}

\begin{remark}\label{rem:NKNP_ex}
Note that verification of condition (a) in \eqref{NKNP} and $x_0-x^\dagger\in (\nsp(K)\cap\nsp(P))^\bot$ in Theorem~\ref{thm:convfrozenNewton} (or the analogous conditions for the other Newton type methods) is related to uniqueness proofs for the particular PDE and measurement setting and therefore to be established on a case-by-case basis, see Section~\ref{sec:ex} below. We will therefore in the corollaries of the current section just re-state \eqref{NKNP} etc. as assumptions.

In fact, we will use $K=\mathbf{F}(q_0)$ in the reduced and $K=\mathbb{F}(q_0,u_0)$ \revision{ in the all-at-once setting}, 
\Margin{R1 3.} 
\Margin{R2 16.} 
which implies $r'(q_0)=\textup{Id}$ and in the examples of Section~\ref{sec:ex} show that 
\begin{equation}\label{NKNP_ex}
\nsp(F'(q_0))\cap\nsp(P)=\{0\}.
\end{equation}
With the canonical choice $u_0=S(q_0)$, this also covers the all-at-once setting due to the identity 
\begin{equation}\label{nsp_aao}
\nsp(\mathbb{F}(q_0,S(q_0)))=\{(\uldq,-(D+[B'(S(q_0)\cdot]q_0)^{-1}B(S(q_0))\uldq\, : \, \uldq\in \nsp(\mathbf{F}(q_0))\}.
\end{equation}
\end{remark}

\subsection{Reduced formulation}\label{sec:ex_red}
We use \eqref{red} in the spaces 
\[
X=Q_J= 
%H^s_\mu(J,Q^{(0)})
L^2_\mu(J,Q^{(0)})
\times \prod_{i=1}^I Q^{(i)}=Q^{(0)}_J\times\prod_{i=1}^I Q^{(i)}, \quad  
Y=Y_J\subseteq\prod_{j\in J} Y^j\,.
\]
With
\begin{equation}\label{DFred}
\begin{aligned}
q_0\in\mathcal{D}(\mathbf{F})\subseteq&\{q\in Q_J\, : \, u:=S(q)\in V_J\textup{ uniquely determined by }(D+B(\cdot)q)=f\\
&\hspace*{2cm}\textup{$B$ G\^{a}teaux differentiable at $S(q)$ and }\\
&\hspace*{2cm}D+[B'(S(q))\cdot] q:V_J\to W^*_J\mbox{ an isomorphism }\}, 
\end{aligned}
\end{equation}
we have
\begin{equation}\label{F_red}
\mathbf{F}(q)=CS(q)
\mbox{ where } S(q)=(D+B(\cdot)q)^{-1}(f),
\end{equation}
and by implicit differentiation of the identity $D+B(S(q))q=f$, 
\begin{equation}\label{Fprime_red}
\mathbf{F}'(q)=CS'(q)
\mbox{ where } S'(q)\uldq=-(D+[B'(S(q))\cdot]q)^{-1}B(S(q))\uldq.
\end{equation}
Due to \eqref{DFred} and our assumption \eqref{B0reg} of $B_0=B^{(i_0)}(S(q_0))$ being an isomorphism, we have
\begin{equation*}
\mathbf{F}'(q_0)\in L(\widetilde{Q}_J,Y_J)\ \textup{ for } 
\widetilde{Q}_J= 
%H^s_\mu(J,\widetilde{Q}^{(0)})
L^2_\mu(J,\widetilde{Q}^{(0)})
\times \prod_{i=1}^I Q^{(i)}=\widetilde{Q}^{(0)}_J\times\prod_{i=1}^I Q^{(i)}.
\end{equation*}
A sufficient condition for the range invariance \eqref{rangeinvar_diff} with $K=\mathbf{F}'(q_0)$ to hold for an arbitrary observation operator $C$ is 
\begin{equation}\label{rangeinvarS}
\forall q\in Q_J \, \exists r(q)=(r^{0,j}(q))_{j\in J},r^{(i)}(q))_{i\in\{1,\ldots,I\}})\in \widetilde{Q}_J\,: \ S(q)-S(q_0)=S'(q_0)r(q).
\end{equation}
Due to the identities
\begin{eqnarray}
&&DS(q)+B(S(q))q=f \label{Sq}\\
&&DS(\tilde{q})+B(S(\tilde{q}))\tilde{q}=f \label{Sqtil}\\
&&D(S(q)-S(\tilde{q}))+[B(S(q))-B(S(\tilde{q}))]\tilde{q}+B(S(q))(q-\tilde{q})=0 \label{Sqdiff}\\
&&D(S'(\tilde{q})\uldq)+[B'(S(\tilde{q}))S'(\tilde{q})\uldq]\tilde{q}+B(S(\tilde{q}))\uldq=0, \label{Sqprime}
\end{eqnarray}
that imply 
\[
\begin{aligned}
&Dv+B'(S(\tilde{q}))v\tilde{q}\\ 
&+[B(S(q))-B(S(\tilde{q}))-B'(S(\tilde{q}))(S(q)-S(\tilde{q}))]\tilde{q}
+B(S(q))(q-\tilde{q})-B(S(\tilde{q}))\uldq=0
\end{aligned}
\]
for $v=S(q)-S(\tilde{q})-S'(\tilde{q})\uldq$ and any $q,\tilde{q}\in\mathcal{D}(\mathbf{F})$, $\uldq\in Q$, and the assumed bijectivity of the operator 
$D+[B'(S(\tilde{q}))\cdot]\tilde{q}$, condition \eqref{rangeinvarS} is equivalent to 
\[
\begin{aligned}
&[B(S(q))-B(S(q_0))-B'(S(q_0))(S(q)-S(q_0))]q_0\\
%+B(S(q))(q-q_0)-B(S(q_0))r(q)
&+[B(S(q))-B(S(q_0))](q-q_0)-B(S(q_0))(r(q)-(q-q_0))
=0\,.
\end{aligned}
\]
Under assumption \eqref{B0reg}, this can be achieved by setting $r(q)=(r^{(0)}(q),\ldots,r^{(I)}(q))$ with
\begin{equation}\label{def_r_red}
\begin{aligned}
r^{(0)}(q):=&q^{(0)}-q^{(0)}_0 + r^{(0)}_\textup{diff}(q,S(q);q_0,S(q_0))\\
r^{(i)}(q):=&r^{(i)}(q^{(i)})=q^{(i)}-q^{(i)}_0, \quad i\in\{1,\ldots,I\}.
\end{aligned}
\end{equation}
with 
\begin{equation}\label{r0diff}
\begin{aligned}
&r^{(0)}_\textup{diff}(q,u;q_0,u_0):=B_0^{-1}\Bigl([B(u)-B(u_0)]q-[B'(u_0)(u-u_0)]q_0
\Bigr)\\
&\quad = B_0^{-1}\Bigl([B(u)-B(u_0)-B'(u_0)(u-u_0)]q_0
+[B(u)-B(u_0)](q-q_0)\Bigr).
\end{aligned}
\end{equation}

We now investigate the further convergence conditions for the methods in Section~\ref{sec:convana}. 

A verification of Assumption~\ref{ass2} for the variational approach can be done in the following setting
\begin{equation}\label{ass2_red}
\left.
\begin{aligned}
&Q^{(0)}_J, Q^{(i)} \textup{ are Banach spaces with }
\tilde{\mathcal{T}}=\mathcal{T}:=\prod_{i=0}^I \mathcal{T}^{(i)} \textup{ the norm topology on $Q_J$;}\\
&Y_J \textup{ is reflexive or dual of a separable space, }
%\\&\quad
\mathcal{T}_Y \textup{ the 
%corresponding 
weak(*) topology;}\\
&q_0\in U:=\mathcal{B}_\sigma(q^\dagger)\subseteq\mathcal{D}(\mathbf{F}) \textup{ with $\sigma>0$ sufficiently small;}\\
&\textup{Sublevel sets of $\mathcal{R}$ are compact in $Q$ (wrt the norm topology in $Q$);}\\
&\textup{The functional $\mathcal{P}$ is $\mathcal{T}$ lower semicontinuous;}\\
&\textup{The operator $B$ is continuously differentiable on }
S(U)=\{S(q)\, : \, q\in U\}
\\
&\hspace*{1cm}\textup{with uniformly bounded dervative $B'$ on $S(U)$; }\\
%&\hspace*{1cm}\exists M>0\, \forall q\in U\,: \ D+B(u)q=f\ \Rightarrow \ \|B'(u)\|_{Q_J\to W_J}\leq M
&(D+[B'(S(q))\cdot]q)^{-1}B(S(q))\in L(\widetilde{Q},V),
\end{aligned}
\right\}\end{equation}
and based on Theorem~\ref{th:conv_var} yields a convergence result for variational regularization of \eqref{DUBqjuf} in a reduced setting.
\begin{corollary}[variational, reduced]\label{cor:var_red}
Under conditions \eqref{Cuy}, \eqref{B0reg}, \eqref{ass2_red}, $D\in L(V_J,W^*_J)$, $\widetilde{Q}_J=Q_J$, the approximations $x^\delta=q^\delta$ defined by \eqref{var_eta}, \eqref{F_red} with \eqref{alphabetaeta} converge norm subseqentially to a solution of 
\eqref{Fxy}, \eqref{F_red} with $\mathcal{P}(q)=0$, 
If the $\mathcal{R}$ minimizing solution $(\hat{r}^\dagger,x^\dagger)=(\hat{r}^\dagger,q^\dagger)$ of 
\eqref{Fxy}, \eqref{F_red} with $\mathcal{P}(q)=0$ 
is unique, then $\hat{r}^\delta\stackrel{\tilde{\mathcal{T}}}{\longrightarrow}r(q^\dagger)$ and $\|q^\delta-q^\dagger\|_{Q_J}\to0$ as $\delta\to0$.
\end{corollary}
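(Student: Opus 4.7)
The plan is to invoke Theorem~\ref{th:conv_var} with $F=\mathbf{F}$, $X=Q_J$, $\widetilde{X}=\widetilde{Q}_J=Q_J$, $Y=Y_J$, $K=\mathbf{F}'(q_0)$, and $r$ given by \eqref{def_r_red}--\eqref{r0diff}. Two tasks remain: verify the differential range invariance \eqref{rangeinvar_diff}, and verify the five items of Assumption~\ref{ass2} in the topologies supplied by \eqref{ass2_red}.

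For \eqref{rangeinvar_diff}, since $\mathbf{F}(q)-\mathbf{F}(q_0)=C(S(q)-S(q_0))$ and $\mathbf{F}'(q_0)\uldq=CS'(q_0)\uldq$, it suffices to show $S(q)-S(q_0)=S'(q_0)r(q)$. Applying the operator $D+[B'(S(q_0))\cdot]q_0$ to $S(q)-S(q_0)$ via \eqref{Sqdiff} at $\tilde{q}=q_0$, and to $S'(q_0)r(q)$ via \eqref{Sqprime} at $\tilde{q}=q_0$, and using that this operator is an isomorphism on $V_J$ by \eqref{DFred}, reduces the identity to
\[
B(S(q_0))r(q)=B(S(q_0))(q-q_0)+[B(S(q))-B(S(q_0))](q-q_0)+[B(S(q))-B(S(q_0))-B'(S(q_0))(S(q)-S(q_0))]q_0.
\]
Multiplying by $B_0^{-1}$, routing the correction term into the $q^{(0)}$-component, and keeping $r^{(i)}(q)=q^{(i)}-q_0^{(i)}$ for $i\geq1$, reproduces \eqref{def_r_red}--\eqref{r0diff}. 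Membership of $r(q)$ in $\widetilde{Q}_J=Q_J$ and boundedness $K\in L(Q_J,Y_J)$ follow from hypothesis \eqref{B0reg} together with $D\in L(V_J,W^*_J)$, $C\in L(V,Y)$.

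For Assumption~\ref{ass2}, items (i) and (v) are direct hypotheses in \eqref{ass2_red}; item (ii) is trivial because $\tilde{\mathcal{T}}$ is the norm topology; item (iii) is standard since $K$ is bounded linear (hence norm-to-weak(*) continuous) and norms are weak(*) lower semicontinuous on a reflexive or separable-predual space. The essential content is item (iv), the sequential closedness of $r$, which I would prove via the stronger property of norm-to-norm continuity of $r$ on $U$. The implicit function theorem applied to $Du+B(u)q=f$ at $(q_0,u_0)$, using the isomorphism property of $D+[B'(S(q))\cdot]q$ combined with $B\in C^1(S(U);L(Q,W^*))$ from \eqref{ass2_red}, yields continuous differentiability of $S:U\to V_J$. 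Composition with the continuous maps $u\mapsto B(u)$ and $u\mapsto B'(u_0)(u-u_0)$ and with the bounded inverse $B_0^{-1}$ then gives continuity of $q\mapsto r^{(0)}_\textup{diff}(q,S(q);q_0,S(q_0))$.

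The main obstacle is ensuring the implicit function theorem hypotheses hold \emph{uniformly} on $U$ — that $(D+[B'(S(q))\cdot]q)^{-1}$ stays bounded as $q$ varies, and that $S(U)$ remains inside the domain on which $B$ is $C^1$ with a uniform bound on $B'$. This is precisely the role of the smallness requirement on $\sigma$ in \eqref{ass2_red}. With all items verified, Theorem~\ref{th:conv_var} yields subsequential norm convergence of $q^\delta$ to a solution of \eqref{Fxy} satisfying $\mathcal{P}(q)=0$. Under the additional hypothesis of a unique $\mathcal{R}$-minimizing solution, local injectivity of $r$ — evident from the identity-plus-$O(\|q-q_0\|^2)$ structure of $r^{(0)}$ for $U$ small — lets the subsequence-subsequence argument in Theorem~\ref{th:conv_var} upgrade this to convergence of the whole family $(\hat{r}^\delta,q^\delta)_{\delta>0}$, as claimed.
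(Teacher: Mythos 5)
Your verification of the differential range invariance \eqref{rangeinvar_diff} via \eqref{Sqdiff}--\eqref{Sqprime}, and of items (i), (ii), (iii), (v) of Assumption~\ref{ass2}, matches the paper. The gap is in item (iv). You propose to prove it ``via the stronger property of norm-to-norm continuity of $r$ on $U$'', but continuity of $r$ points in the wrong direction: item (iv) asks that from $r(x_n)\stackrel{\tilde{\mathcal{T}}}{\longrightarrow}\hat{r}$ one can extract a subsequence $x_{n_k}\stackrel{\mathcal{T}}{\longrightarrow}x$ with $r(x)=\hat{r}$, i.e., convergence of the \emph{images} must force (subsequential) convergence of the \emph{preimages}. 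This is a local invertibility/properness statement about $r$, not a forward continuity statement; continuity of $S$ and of $q\mapsto r^{(0)}_\textup{diff}(q,S(q);q_0,S(q_0))$ contributes nothing to it. It is precisely this step that transfers the convergence of the auxiliary quantities $\hat{r}^\delta$ and $r(q^\delta)$ obtained from (a)--(c) back to convergence of $q^\delta$ itself, so without it the conclusion of Theorem~\ref{th:conv_var} is not reached.

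The paper closes this step differently: for $i\in\{1,\ldots,I\}$ the components $r^{(i)}(q)=q^{(i)}-q_0^{(i)}$ are trivially invertible (see \eqref{qi}), and for $i=0$ the Implicit Function Theorem is applied to $G=r^{(0)}$ itself (not to the state equation), using $G(q_0)=0$ and $\frac{\partial G}{\partial q^{(0)}}(q_0)=\textup{id}$ from \eqref{Gprime}, to invert $r^{(0)}$ locally with respect to its $q^{(0)}$-argument with a continuous inverse $\phi$; this is where the smallness of $\sigma$ in \eqref{ass2_red} is actually consumed. The correct germ of the idea does appear at the end of your write-up --- the identity-plus-small-perturbation structure of $r^{(0)}$ --- but you deploy it only to obtain injectivity for the uniqueness step, whereas what (iv) needs is the quantitative inverse estimate $\|q_1-q_2\|_{Q_J}\leq (1-c)^{-1}\|r(q_1)-r(q_2)\|_{Q_J}$ (or, equivalently, the IFT statement) guaranteeing continuity of $r^{-1}$ on its range. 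Your application of the Implicit Function Theorem to $Du+B(u)q=f$ to get differentiability of $S$ is a different, and here insufficient, use of that tool.
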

\begin{proof}
Items (i), (ii), (iii), (v) in Assumption~\ref{ass2} directly follow from \eqref{Cuy}, \eqref{B0reg}, \eqref{ass2_red}, $q_0\in \mathcal{B}_\sigma(q^\dagger)\subseteq\mathcal{D}(\mathbf{F})$, and $K=CS'(q_0)$ with $S'(q_0)$ as in \eqref{Fprime_red}.

In order to verify item (iv), we assume that 
\begin{equation}\label{ri}
\hat{r}^{(i)}_n:=r^{(i)}(q_n)\stackrel{\tilde{\mathcal{T}}^{(i)}}{\longrightarrow} \hat{r}^{(i)}, 
\ i\in\{0,\ldots,I\}
\end{equation}
and have to show existence of a subsequence $(q_{n_k})_{k\in\mathbb{N}}$ such that 
$q^{(i)}_{n_k}\stackrel{\mathcal{T}^{(i)}}{\longrightarrow} q^{(i)}_*$ and $r^{(i)}(q_*)=\hat{r}^{(i)})$.
This is trivial for $i\in\{1,\ldots,I\}$ by the definition \eqref{def_r_red} of $r$ (here it would even suffice to use the weak topologies for both $\hat{r}$ and $q$) which implies 
\begin{equation}\label{qi}
q^{(i)}_n=r^{(i)}(q_n)+q^{(i)}_0\stackrel{\mathcal{T}^{(i)}}{\longrightarrow} 
\hat{r}^{(i)}+q^{(i)}_0=:q^{(i)}_*
\quad  i\in\{1,\ldots,I\}.
\end{equation}
For $i=0$, we define $q^{(0)}_n$ implicitly by  
\begin{equation}\label{Gqnr}
\begin{aligned}
G(\revision{q^{(0)}_n})=&0\ \textup{ with } G=r^{(0)},\textup{ that is, }\\ 
G(q)=&q^{(0)}-q^{(0)}_0+r^{(0)}_\textup{diff}(q,S(q);q_0,S(q_0))\\
=&q^{(0)}-q^{(0)}_0+B_0^{-1}\Bigl(
[B(S(q))-B(S(q_0))-B'(S(q_0))(S(q)-S(q_0))]q_0
\\&\qquad\qquad\qquad\qquad
+[B(S(q))-B(S(q_0))](q-q_0)\Bigr).
\end{aligned}
\end{equation}
\Margin{R2 17.}
To resolve \eqref{Gqnr} with respect to $q_n^{(0)}$ and show its convergence as a consequence of \eqref{ri} and \eqref{qi}, we use the Implicit Function Theorem.
For this purpose note that by
\begin{equation}\label{Gprime}
\begin{aligned}
\frac{\partial G}{\partial q^{(0)}}(q)\uldq^{(0)}=&\uldq^{(0)}-B_0^{-1}\Bigl\{
\bigl[\bigl(B'(S(q))-B'(S(q_0))\bigr)\frac{\partial S}{\partial q^{(0)}}(q)\uldq^{(0)}\bigr]q_0\\
&+[B'(S(q))\frac{\partial S}{\partial q^{(0)}}(q)\uldq^{(0)}](q-q_0) 
+ [B^{(0)}(S(q))-B^{(0)}(S(q_0))]\uldq^{(0)}
\Bigr\}
\end{aligned}
\end{equation}
where as in \eqref{Fprime_red}
\begin{equation}\label{dSdq0}
\frac{\partial S}{\partial q^{(0)}}(q)\uldq^{(0)}
= -\Bigl(D+[B'(S(q))\cdot] q\Bigr)^{-1}B^{(0)}(S(q))\uldq^{(0)}.
\end{equation}
\begin{comment}
Under the smoothness assumptions made on $B$, the norm of the expression in curly brackets in \eqref{Gprime} can be estimated by $c\|\uldq^{(0)}\|$ with a constant $c$ that is small for small radius $\sigma$, in particuar smaller than $\frac{1}{\|B_0^{-1}\|}$ for $\sigma$ sufficiently small. As a consequence, $\frac{\partial G}{\partial q^{(0)}}(q)$ is bijective with 
\[
\|\frac{\partial G}{\partial q^{(0)}}(q)^{-1}\|\leq\frac{1}{1-c\|B_0^{-1}\|}.
\]
\end{comment}
Thus $G(q_0)=0$ and $\frac{\partial G}{\partial q^{(0)}}(q_0)=\textup{Id}$.
Application of the Implicit Function Theorem yields existence of neighborhoods $U_0$ of $q_0^{(0)}$ and $U'$ of $q_0^\oneI:=(q_0^{(1)},\ldots,q_0^{(I)})$ and a continuously differentiable function $\phi:U'\to U_0$ such that
\[
\begin{aligned}
&\forall q^\oneI:=(q^{(1)},\ldots,q^{(I)})\in U'\,: \ G(\phi(q^\oneI),q^\oneI)=0\\
&\forall q^{(0)}\in U_0\,, \ q^\oneI\in U'\,: \ G(q^{(0)},q^\oneI)=0 \ \Rightarrow \ q^{(0)}=\phi(q^\oneI).
\end{aligned}
\]
We can therefore define $q^{(0)}_n:=\phi(q^\oneI_n)$, $q^{(0)}_*:=\phi(q^\oneI_*)$. 
Continuity of the function $\phi$ implies the desired convergence
$q^{(0)}_n\stackrel{\mathcal{T}}{\longrightarrow} q^{(0)}_*$. 
By 
$G(q_n)=\hat{r}^{(0)}(q_n)\stackrel{\mathcal{T}}{\longrightarrow}\hat{r}^{(0)}$ and 
$G(q_n)=G(\phi(q^\oneI_n),q^\oneI_n)\stackrel{\mathcal{T}}{\longrightarrow}G(\phi(q^\oneI_*),q^\oneI_*)=r_q^{(0)}(q^{(0)}_*,q^\oneI_*)$ 
we get $r_q^{(0)}(q_*)=\hat{r}_q^{(0)}$.
Note that the Implicit Function Theorem also yields local injectivity of $r$. 
\end{proof}
\begin{remark}
Note that due to the relaxed definition \eqref{var_eta} of regularized approximations, no $\tilde{\mathcal{T}}$ lower semicontinuity of $\mathcal{R}$ is needed. This enables the choice of a stronger regularization functional $\mathcal{R}$, which in its turn enables 
the use of the strong topology in $Q$ as mandated by the use of the Implicit Function Theorem in the reduced setting (which will not be needed in the all-at-once setting).
\end{remark}

\medskip

For the frozen Newton method \eqref{frozenNewtonHilbert}, from Theorem~\ref{thm:convfrozenNewton} we conclude 
\begin{corollary}[frozen Newton, reduced]\label{cor:convfrozenNewton_red}
Let $q_0\in U:=\mathcal{B}_\rho^{Q_J}(q^\dagger)\subseteq\mathcal{D}(\mathbf{F})$ for some $\rho>0$ sufficiently small and assume that $B:V_J\to L(Q_J,W^*_J)$ is continuously differentiable, $D\in L(V_J,W^*_J)$, and $\widetilde{Q}_J=Q_J$.
Moreover, let $q_0-q^\dagger\in \bigl(\nsp(\mathbf{F}'(q_0))\cap \nsp(P)\bigr)^\bot$, \eqref{NKNP} hold with $K=\mathbf{F}'(q_0)$, \eqref{Fprime_red} and the stopping index $n_*=n_*(\delta)$ be chosen according to \eqref{nstar}.
\\
Then the iterates $(q_n^\delta)_{n\in\{1,\ldots,n_*(\delta)\}}$ are well-defined by \eqref{frozenNewtonHilbert}, \eqref{F_red}, remain in $\mathcal{B}_\rho^{Q_J}(q^\dagger)$ and converge in $Q_J$, $\|q_{n_*(\delta)}^\delta-q^\dagger\|_{Q_J}\to0$ as $\delta\to0$. In the noise free case $\delta=0$, $n_*(\delta)=\infty$ we have $\|q_n-q^\dagger\|_{Q_J}\to0$ as $n\to\infty$.
\end{corollary}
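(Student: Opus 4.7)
The plan is to apply Theorem~\ref{thm:convfrozenNewton} with $X = Q_J$, $F = \mathbf{F}$, $x = q$, $K = \mathbf{F}'(q_0)$, and $r$ defined by \eqref{def_r_red}--\eqref{r0diff}. The stopping-index condition, the orthogonality $q_0 - q^\dagger \in (\nsp(K) \cap \nsp(P))^\bot$, and the compatibility condition \eqref{NKNP} are supplied in the statement, so the work reduces to verifying that \emph{this} $r$ realises the differential range invariance \eqref{rangeinvar_diff}, is G\^ateaux differentiable, and satisfies the smallness estimate \eqref{rid}.

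For the first two points I would retrace the identity chain \eqref{Sq}--\eqref{Sqprime} specialised to $\tilde q = q_0$: inverting the isomorphism $D + [B'(S(q_0))\,\cdot\,]\,q_0$ (available by $q_0 \in \mathcal{D}(\mathbf{F})$ and \eqref{DFred}) produces $S(q) - S(q_0) = S'(q_0)\,r(q)$ with $r(q)$ exactly the expression \eqref{def_r_red}, and composition with $C$ yields $\mathbf{F}(q) - \mathbf{F}(q_0) = \mathbf{F}'(q_0)\,r(q) = K r(q)$. The hypothesis $\widetilde{Q}_J = Q_J$, combined with \eqref{B0reg}, ensures that the $B_0^{-1}$ appearing in $r^{(0)}_\textup{diff}$ actually maps into $Q^{(0)}_J$, so $r$ takes values in $Q_J$ and the Newton update \eqref{frozenNewtonHilbert} is well posed. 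G\^ateaux differentiability of $r$ then follows from continuous differentiability of $B$, boundedness of $B_0^{-1}$, and the formula \eqref{dSdq0} for $\tfrac{\partial S}{\partial q^{(0)}}(q)$, which is well defined on $U$ by \eqref{DFred}.

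The main obstacle, and the step that dictates the smallness of $\rho$, is \eqref{rid}. The affine components $r^{(i)}$ with $i \geq 1$ contribute zero to $(r(q^\dagger) - r(q)) - (q^\dagger - q)$, so only the $i = 0$ term
\begin{equation*}
\Delta := r^{(0)}_\textup{diff}(q^\dagger, S(q^\dagger); q_0, S(q_0)) - r^{(0)}_\textup{diff}(q, S(q); q_0, S(q_0))
\end{equation*}
remains. Setting $g(q) := r^{(0)}_\textup{diff}(q, S(q); q_0, S(q_0))$, inspection of \eqref{r0diff} reveals that each bracket is bilinear in a ``difference-from-$q_0$'' factor (either $q - q_0$, or $S(q) - S(q_0)$, the latter being $O(\|q-q_0\|)$ by \eqref{dSdq0}) and a ``second-order'' factor (either another $q - q_0$, or the first-order Taylor remainder $B(S(q)) - B(S(q_0)) - B'(S(q_0))(S(q) - S(q_0))$), so $g(q_0) = 0$ and direct differentiation also yields $g'(q_0) = 0$. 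Continuous differentiability of $B$ on $S(U)$, boundedness of $S'$ on $U$, and boundedness of $B_0^{-1}$ then deliver a uniform bound $\|g'(q)\|_{L(Q_J,Q_J)} \leq C \|q - q_0\|_{Q_J}$ for $q \in U$, whence the fundamental theorem of calculus gives $\|\Delta\|_{Q_J} \leq 2 C \rho \|q^\dagger - q\|_{Q_J}$ (using $q, q^\dagger, q_0 \in \mathcal{B}_\rho^{Q_J}(q^\dagger)$). Shrinking $\rho$ pushes $2C\rho$ below any prescribed $c \in (0,1)$, yielding \eqref{rid}. With every hypothesis of Theorem~\ref{thm:convfrozenNewton} now in force, its conclusions on well-definedness, confinement to $\mathcal{B}_\rho^{Q_J}(q^\dagger)$, and $Q_J$-norm convergence transfer verbatim, both in the noisy and the noise-free regimes.
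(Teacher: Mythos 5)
Your proposal is correct: like the paper, you reduce everything to Theorem~\ref{thm:convfrozenNewton} and correctly identify that the only substantive work is verifying \eqref{rid} for the $r$ of \eqref{def_r_red}--\eqref{r0diff}, the components $r^{(i)}$, $i\geq1$, being exactly affine. Where you differ is in the mechanism for \eqref{rid}: the paper's proof of this corollary rearranges the difference $\Delta=r^{(0)}_\textup{diff}(q^\dagger,u^\dagger;q_0,u_0)-r^{(0)}_\textup{diff}(q,u;q_0,u_0)$ algebraically into three products of small factors (see \eqref{est_rdiff}) and combines this with the stability bound $\|S(q)-S(q^\dagger)\|_{V_J}\leq M\|q-q^\dagger\|_{Q_J}$ derived from \eqref{Sqdiff} in \eqref{udiff}; you instead observe that $g(q):=r^{(0)}_\textup{diff}(q,S(q);q_0,S(q_0))$ satisfies $g(q_0)=0$ and $g'(q_0)=0$ and invoke the mean value inequality. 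This derivative-based route is legitimate and is in fact the one the paper itself uses for the alternative frozen Newton method (Corollary~\ref{cor:convNewton0_red}, via \eqref{rdiffprime0}); it costs slightly more in that you need continuity of $q\mapsto S'(q)$ and hence of $g'$ on $U$, whereas the direct rearrangement only uses continuity and differentiability of $B$ together with \eqref{udiff}. One caveat: under mere continuous differentiability of $B$, your intermediate claim $\|g'(q)\|_{L(Q_J,Q_J)}\leq C\|q-q_0\|_{Q_J}$ is too strong --- a linear modulus would require Lipschitz continuity of $B'$ and $S'$; what continuity of $g'$ at $q_0$ together with $g'(q_0)=0$ actually gives is $\sup_{q\in U}\|g'(q)\|\leq\varepsilon(\rho)$ with $\varepsilon(\rho)\to0$ as $\rho\to0$. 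Since your final step only needs this supremum to drop below a prescribed $c\in(0,1)$ for small $\rho$, the overstatement is harmless and the argument goes through.
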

\begin{proof}
To establish \eqref{rid}, it suffices to estimate the difference between the values of $r^{(0)}_\textup{diff}(q,S(q);q_0,S(q_0))$ at $q$ and at $q^\dagger$.
Indeed, with $u_0=S(q_0)$, $u=S(q)$, $u^\dagger=S(q^\dagger)$,
\begin{equation}\label{est_rdiff}
\begin{aligned}
&r^{(0)}_\textup{diff}(q^\dagger,u^\dagger;q_0,u_0)-r^{(0)}_\textup{diff}(q,u;q_0,u_0)\\
&=B_0^{-1}\Bigl\{
\bigl[B(u)-B(u_0)-B'(u_0)(u-u_0)
-\bigl(B(u^\dagger)-B(u_0)-B'(u_0)(u^\dagger-u_0)\bigr)\bigr]q_0\\
&\hspace*{4cm}
+[B(u)-B(u_0)](q-q_0)
-[B(u^\dagger)-B(u_0)](q-q_0)\Bigr)\Bigr\}\\
&= B_0^{-1}\Bigl\{
[B(u)-B(u^\dagger)-B'(u_0)(u-u^\dagger)]q_0\\
&\hspace*{4cm}
+[B(u)-B(u^\dagger)](q-q_0)
+[B(u^\dagger)-B(u_0)](q-q^\dagger)\Bigr)\Bigr\},
\end{aligned}
\end{equation}
where according to \eqref{Sqdiff}
\begin{equation}\label{udiff}
\begin{aligned}
&\|u-u^\dagger\|_{V_J}=\|S(q)-S(q^\dagger)\|_{V_J}\\
&\leq\|(D+[B'(q^\dagger)\cdot]q^\dagger)^{-1}\|_{W^*_J\to V_J}
%\\&\qquad\cdot
\|[B(u)-B(u^\dagger)-B'(u)(u-u^\dagger)]q^\dagger+B(u)(q-q^\dagger)\|_{W^*_J}.
%&D(S(q)-S(\tilde{q}))+[B(S(q))-B(S(\tilde{q}))]\tilde{q}+B(S(q))(q-\tilde{q})=0\\
%&\Bigl(D+[B'(S(\tilde{q})\cdot]\tilde{q}\Bigr)(S(q)-S(\tilde{q}))\\
%&+[B(S(q))-B(S(\tilde{q}))-B'(S(\tilde{q})(S(q)-S(\tilde{q}))]\tilde{q}+B(S(q))(q-\tilde{q})=0 
\end{aligned}
\end{equation}
Due to continuous differentiability of $B$ we have
$[B(u)-B(u^\dagger)-B'(u)(u-u^\dagger)]q^\dagger\|_{W^*_J}= o(\|u-u^\dagger\|_{V_J})$ and thus, by \eqref{udiff}, in a sufficiently small neighborhood of $q^\dagger$, there exists a constant $M>0$ such that
$\|u-u^\dagger\|_{V_J}\leq M \|q-q^\dagger\|_{Q_J}.$
Thus, from \eqref{est_rdiff}, for $\rho$ sufficiently small we obtain \eqref{rid}.
\end{proof}

\subsection{All-at-once formulation}\label{sec:ex_aao}
Here some proofs are simpler and require less assumptions; in particular, the Implicit Function Theorem is not needed.

We use \eqref{aao} with  
\[
X=Q_J\times V_J, \quad
Y=W^*_J\times Y_J
\]
with $Q_J$, $V_J$, $W_J$, $Y_J$ as in Section~\ref{sec:ex_red} (and a slight overload of notation on $y$), as well as 
\begin{equation}\label{F_aao}
\mathbb{F}(q,u)=\left(\begin{array}{l}
Du+B(u)q-f\\
Cu
\end{array}\right).
\end{equation}
Thus we have
\begin{equation}\label{Fprime_aao}
\mathbb{F}'(q,u)(\uldq,\uldu)=\left(\begin{array}{l}
D\uldu+[B'(u)\uldu]q+B(u)\uldq\\
C\uldu
\end{array}\right),
\end{equation}
where due to 
\[
\begin{aligned}
\|\mathbb{F}'(q,u)(\uldq,\uldu)\|_{W^*_J\times V_J}^2
\leq& 4\|B(u_0)\|_{L(\widetilde{Q}_J,W^*_J)}^2\|\uldq\|_{\widetilde{Q}_J}^2\\
&+\bigl(4\|D\|_{L(V_J,W^*_J)}^2+4\|[B'(u_0)\cdot]q_0\|_{L(V_J,W^*_J)}^2
+\|C\|_{L(V_J,Y_J)}^2\bigr)\|\uldu\|_{V_J}^2
\end{aligned}
\]
we have 
\begin{equation*}
\mathbf{F}'(q_0,u_0)\in L(\widetilde{Q}_J\times V_J,W^*_J\times Y_J).
\end{equation*}
Therefore \eqref{rangeinvar_diff} with $K=\mathbb{F}'(q_0,u_0)$ and $(q^\dagger,u^\dagger)\in U\subseteq X=Q_J\times V_J$, $r(q,u)=(r_q(q,u),r_u(q,u))$ is equivalent to
\[\begin{aligned}
&D(u-u_0)+B(u)q-B(u_0)q_0=Dr_u(q,u)+[B'(u)r_u(q,u)]q+B(u)r_q(q,u)\\
&C(u-u_0)=C r_u(q,u).
\end{aligned}\]
A choice that enables this under condition \eqref{B0reg} for an arbitrary linear observation operator $C$ is
\begin{equation}\label{def_r_aao}
\begin{aligned}
r_q^{(0)}(q,u):=&q^{(0)}-q^{(0)}_0 
+r^{(0)}_\textup{diff}(q,u;q_0,u_0)\\
%+ B_0^{-1}\Bigl(\sum_{i=1}^I
%[B^{(i)}(u)-B^{(i)}(u_0)-{B^{(i)}}'(u_0)(u-u_0)]q^{(i)}_0\\
%&\hspace*{8cm}+B^{(i)}(u)(q^{(i)}-q^{(i)}_0)\Bigr)\\
r_q^{(i)}(q,u):=&r^{(i)}(q^{(i)})=q^{(i)}-q^{(i)}_0, \quad i\in\{1,\ldots,I\}\\
r_u(q,u):=&r_u(u)=u-u_0
\end{aligned}
\end{equation}
with $r^{(0)}_\textup{diff}(q,u;q_0,u_0)$ defined as in \eqref{r0diff}.

To prove convergence of the all-at-once version of the variational method \eqref{var_eta} from Theorem~\ref{th:conv_var} we assume 
\begin{equation}\label{ass2_aao}
\left.
\begin{aligned}
&Q^{(0)}_J, \ Q^{(i)}, \ V_J, \ Y_J \textup{ are reflexive or duals of separable spaces;}\\
&\tilde{\mathcal{T}}:=
\mathcal{T}:=\prod_{i=0}^I \mathcal{T}^{(i)}\times\mathcal{T}_V, \ \mathcal{T}_Y \textup{ the 
%corresponding 
weak(*) topologies;}\\
&\textup{Sublevel sets of $\mathcal{R}$ are $\tilde{\mathcal{T}}$ compact} 
%\\[-1ex]&\hspace*{4cm}
\\
&\textup{The functional $\mathcal{P}$ is $\mathcal{T}$ lower semicontinuous;}\\
&\textup{The operator $B$ is differentiable on $\{u\in V\ : \ \exists q\in Q_J\, : \ (q,u)\in U$\};}\\
&\textup{For all $i\in\{1,\ldots,I\}$, the operators $u\mapsto B^{(i)}(\cdot)q_0$ 
are $\mathcal{T}_V$-to-$W^*_J$ continuous and}\\ 
&\textup{the operators $(q^{(i)},u)\mapsto B^{(i)}(u)q^{(i)}$
are $\mathcal{T}^{(i)}\times\mathcal{T}_V$-to-$W^*_J$ continuous.} 
\end{aligned}
\right\}\end{equation}
The conditions on $\mathcal{R}$ and $\mathcal{P}$ can, e.g., be satisfied by defining them by means of norms $\mathcal{R}(\hat{r}_q,\hat{r}_u)=\|\hat{r}_q^{(0)}\|_{\widetilde{Q}^{(0)}_J}^2+\sum_{i=1}^I\|\hat{r}_q^{(i)}\|_{Q^{(i)}}^2+\|\hat{r}_u\|_{V_J}^2$, 
$\mathcal{P}(x)=\|Px\|_Z^2$ for some $P\in L(X,Z)$.

\begin{corollary}[variational, all-at-once]\label{cor:var_aao}
Under conditions \eqref{Cuy}, \eqref{B0reg}, \eqref{ass2_aao}, 
$(q_0,u_0)\in U$, 
%:=\mathcal{B}_\sigma(q^\dagger)\subseteq\mathcal{D}(\mathbb{F})$, with $\sigma>0$ sufficiently small,
$D\in L(V_J,W^*_J)$, $\widetilde{Q}_J=Q_J$, 
the approximations $x^\delta=(q^\delta,u^\delta)$ defined by \eqref{var_eta}, \eqref{F_aao} with \eqref{alphabetaeta} converge $\mathcal{T}$ subseqentially to a solution $(\hat{r}^\dagger,q^\dagger,u^\dagger)$ of 
\eqref{Fxy}, \eqref{F_aao} with $\mathcal{P}(q,u)=0$, 
If the $\mathcal{R}$ minimizing solution $(\hat{r}^\dagger,q^\dagger,u^\dagger)$ of 
\eqref{Fxy}, \eqref{F_aao} with $\mathcal{P}(q,u)=0$ 
is unique, then $\hat{r}^\delta\stackrel{\mathcal{T}}{\longrightarrow}r(q^\dagger,u^\dagger)$ and $(q^\delta,u^\delta)\stackrel{\mathcal{T}}{\longrightarrow}(q^\dagger,u^\dagger)$.
\end{corollary}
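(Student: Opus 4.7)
The plan is to invoke Theorem~\ref{th:conv_var} in the all-at-once setting by verifying Assumption~\ref{ass2} under the topologies prescribed by \eqref{ass2_aao}. Items (i), (iii), (v) follow essentially by inspection: sublevel-set compactness and lower semicontinuity are part of \eqref{ass2_aao}; $K=\mathbb{F}'(q_0,u_0)$ is a bounded linear operator from $\widetilde{Q}_J\times V_J$ to $W^*_J\times Y_J$ thanks to \eqref{B0reg}, hence weak(*)-to-weak(*) continuous, and norms on reflexive (or separable predual) spaces are weak(*) lower semicontinuous. Item (ii) is immediate because norm convergence in a dual Banach space implies weak(*) convergence, so no subsequence is actually required.

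The only substantial step is item (iv): sequential closedness of $r$ defined in \eqref{def_r_aao}. Suppose $(q_n,u_n)\subseteq U$ with $r(q_n,u_n)\to\hat{r}$ in $\tilde{\mathcal{T}}$. Since $r_u(q_n,u_n)=u_n-u_0$ and $r_q^{(i)}(q_n,u_n)=q_n^{(i)}-q_0^{(i)}$ for $i\geq 1$ are affine, one obtains directly $u_n\to u_*:=u_0+\hat{r}_u$ in $\mathcal{T}_V$ and $q_n^{(i)}\to q_*^{(i)}:=q_0^{(i)}+\hat{r}_q^{(i)}$ in $\mathcal{T}^{(i)}$ for $i\geq 1$. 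This is the key simplification over the reduced case of Corollary~\ref{cor:var_red}: since $u$ is a free variable, no Implicit Function Theorem is needed to retrieve it.

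For the $i=0$ component I would start from the definition \eqref{def_r_aao}--\eqref{r0diff} of $r_q^{(0)}$, apply the isomorphism $B_0$, and use the decomposition $B(u)q=\sum_{i=0}^I B^{(i)}(u)q^{(i)}$ to telescope the $B^{(0)}$ terms into a single expression. This yields
\[
B^{(0)}(u_n)q_n^{(0)}=B_0\,r_q^{(0)}(q_n,u_n)+B_0 q_0^{(0)}+[B'(u_0)(u_n-u_0)]q_0-\sum_{i=1}^I[B^{(i)}(u_n)-B^{(i)}(u_0)]q_n^{(i)}.
\]
The right-hand side has a limit in $W^*_J$ (weak(*)): the first term because $B_0$ is weak(*) continuous; the third because $v\mapsto [B'(u_0)v]q_0$ lies in $L(V_J,W^*_J)$ by \eqref{ass2_aao}; and the sum by the joint continuity $(q^{(i)},u)\mapsto B^{(i)}(u)q^{(i)}$ assumed in \eqref{ass2_aao}. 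Writing $B^{(0)}(u_n)=B_0+(B^{(0)}(u_n)-B_0)$ and exploiting smallness of the perturbation for $(q_n,u_n)\in U$ (achievable by shrinking the radius of $U$ using smoothness of $B$), a Neumann-series inversion gives $q_n^{(0)}\to q_*^{(0)}$ in $\tilde{\mathcal{T}}^{(0)}$. Passing to the weak(*) limit in \eqref{def_r_aao} confirms $r(q_*,u_*)=\hat{r}$.

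Finally, Theorem~\ref{th:conv_var} supplies subsequential convergence to some solution of \eqref{Fxy} with $\mathcal{P}=0$, and when the $\mathcal{R}$-minimizing solution is unique, the standard subsequence–subsequence argument promotes this to convergence of the whole family. The main obstacle is the $q^{(0)}$ step: since \eqref{ass2_aao} imposes weak continuity only on $B^{(i)}$ for $i\geq 1$, the argument must rely on the isomorphism property of $B_0$ from \eqref{B0reg} together with localization to a small neighborhood $U$ of $u^\dagger$, rather than on continuity of $B^{(0)}$ itself.
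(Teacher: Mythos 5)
Your overall strategy coincides with the paper's: verify Assumption~\ref{ass2} under \eqref{ass2_aao}, note that items (i)--(iii),(v) are immediate and that in item (iv) only the $q^{(0)}$ component is nontrivial, then invoke Theorem~\ref{th:conv_var} and a subsequence--subsequence argument for the uniqueness statement. Where you diverge is in the $q^{(0)}$ step. The paper simply rearranges the defining relation as $q^{(0)}_n=q^{(0)}_0+\hat{r}^{(0)}_{q,n}-r^{(0)}_\textup{diff}(q_n,u_n;q_0,u_0)$ and invokes $\prod_{i=1}^I\mathcal{T}^{(i)}\times\mathcal{T}_V$-to-$\mathcal{T}^{(0)}$ continuity of $r^{(0)}_\textup{diff}$, whereas you apply $B_0$, telescope the $B^{(0)}$ terms into $B^{(0)}(u_n)q_n^{(0)}$ (your identity is algebraically correct), and invert by a Neumann series. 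Your version has the merit of making explicit that $r^{(0)}_\textup{diff}$ itself contains the term $[B^{(0)}(u)-B^{(0)}(u_0)]q^{(0)}$ and hence depends on the unknown $q^{(0)}_n$ -- a dependence the paper's one-line argument passes over silently.

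The soft spot in your route is the final limit passage. Since $\mathcal{T}_V$ is only the weak(*) topology, $u_n\stackrel{\mathcal{T}_V}{\longrightarrow}u_*$ does not yield operator-norm convergence of $B^{(0)}(u_n)$ to $B^{(0)}(u_*)$, so writing $q_n^{(0)}=B^{(0)}(u_n)^{-1}w_n$ with $w_n$ weak(*) convergent and $\|B^{(0)}(u_n)-B_0\|$ merely small gives you only that $q_n^{(0)}$ stays within an $O(\rho)$-neighbourhood of the weakly convergent sequence $B_0^{-1}w_n$; that is not enough to identify a weak(*) limit of $q_n^{(0)}$ itself for fixed $U$. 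What is actually needed to close the argument is joint sequential continuity of $(q^{(0)},u)\mapsto [B^{(0)}(u)-B^{(0)}(u_0)]q^{(0)}$ with respect to $\mathcal{T}^{(0)}\times\mathcal{T}_V$ -- i.e.\ the $i=0$ analogue of the continuity that \eqref{ass2_aao} postulates only for $i\in\{1,\ldots,I\}$ -- after which one argues by a fixed-point/contraction step exactly as you sketch. The paper's own proof relies on the same unstated continuity when it declares $r^{(0)}_\textup{diff}$ continuous, so this is a gap you share with (and arguably inherit from) the source rather than one you introduced; still, you should state the required continuity of the $i=0$ term explicitly rather than deriving it from ``smallness of the perturbation'' alone.
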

\begin{proof}
%Similarly to the proof of Corollary~\ref{cor:var_aao} but much more straightforward; in particular no Implicit Function Theorem is needed.

Again, items (i), (ii), (iii), (v) in Assumption~\ref{ass2} directly follow from the assumptions made in the corollary.

In order to verify item (iv), we assume that 
\begin{equation*}%\label{ri}
\hat{r}^{(i)}_{q,n}:=r_q^{(i)}(q_n,u_n)\stackrel{\mathcal{T}^{(i)}}{\longrightarrow} \hat{r}_q^{(i)}, \ i\in\{0,\ldots,I\},\quad
\hat{r}_{u,n}:=r_u(q_n,u_n)\stackrel{\mathcal{T}_V}{\longrightarrow} \hat{r}_u
\end{equation*}
and have to show existence of a subsequence $(q_{n_k},u_{n_k})_{k\in\mathbb{N}}$ such that 
$q^{(i)}_{n_k}\stackrel{\mathcal{T}^{(i)}}{\longrightarrow} q^{(i)}$, 
$u_{n_k}\stackrel{\mathcal{T}_V}{\longrightarrow} u$ and 
$r_q^{(i)}(q,u)=\hat{r}_q^{(i)}$, $r_u(q,u)=\hat{r}_u)$.
Again, by the definition \eqref{def_r_aao} of $r$, this is nontrivial only for $q^{(0)}$ but can easily established due to the identity
\[
\begin{aligned}
q^{(0)}_n=q^{(0)}_0+\hat{r}^{(0)}_{q,n}-r^{(0)}_\textup{diff}(q_n,u_n;q_0,u_0)
\end{aligned}
\]
since our assumptions imply $\prod_{i=1}^I\mathcal{T}^{(i)}\times\mathcal{T}_V$-to-$\mathcal{T}^{(0)}$ continuity of $r^{(0)}_\textup{diff}(\cdot,\cdot;q_0,u_0)$.
\end{proof}
\begin{corollary}[frozen Newton, all-at-once]\label{cor:convfrozenNewton_aao}
Let $(q_0,u_0)\in U:=\mathcal{B}_\rho^{Q_J\times V_J}(q^\dagger,u^\dagger)\subseteq Q_J\times V_J$
for some $\rho>0$ sufficiently small, assume that $B:V_J\to L(Q_J,W^*_J)$ is continuously differentiable, $D\in L(V_J,W^*_J)$, and $\widetilde{Q}_J=Q_J$.
Moreover, let $(q_0,u_0)-(q^\dagger,u^\dagger)\in \bigl(\nsp(\mathbb{F}'(q_0,u_0))\cap \nsp(P)\bigr)^\bot$, \eqref{NKNP} hold with $K=\mathbb{F}'(q_0,u_0)$ and the stopping index $n_*=n_*(\delta)$ be chosen according to \eqref{nstar}.
\\
Then the iterates $(q_n^\delta,u_n^\delta)_{n\in\{1,\ldots,n_*(\delta)\}}$ are well-defined by \eqref{frozenNewtonHilbert}, \eqref{F_aao}, remain in $\mathcal{B}_\rho^{Q_J\times V_J}(q^\dagger,u^\dagger)$ and converge in $Q_J\times V_J$, $\|q_{n_*(\delta)}^\delta-q^\dagger\|_{Q_J}\to0$, $\|u_{n_*(\delta)}^\delta-u^\dagger\|_{V_J}\to0$ as $\delta\to0$. In the noise free case $\delta=0$, $n_*(\delta)=\infty$ we have $\|q_n-q^\dagger\|_{Q_J}\to0$, $\|u_n-u^\dagger\|_{V_J}\to0$ as $n\to\infty$.
\end{corollary}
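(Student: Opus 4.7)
The plan is to invoke Theorem~\ref{thm:convfrozenNewton} in the all-at-once setting with $X=Q_J\times V_J$, $Y=W^*_J\times Y_J$, $K=\mathbb{F}'(q_0,u_0)$, and the mapping $r$ specified in \eqref{def_r_aao}. The range invariance condition \eqref{rangeinvar_diff} with this $K$ and this $r$ has already been established in the derivation preceding \eqref{def_r_aao}, so the only genuine content to verify is the smallness condition \eqref{rid} on $r$; all other hypotheses (the $x_0$-error lying in $(\nsp(K)\cap\nsp(P))^\bot$, condition \eqref{NKNP}, and the stopping rule \eqref{nstar}) are assumed directly in the statement.

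To verify \eqref{rid}, observe that by construction of $r$ in \eqref{def_r_aao}, the components $r_q^{(i)}$ for $i\in\{1,\ldots,I\}$ and $r_u$ satisfy
\[
r_q^{(i)}(q^\dagger,u^\dagger)-r_q^{(i)}(q,u)-(q^{\dagger(i)}-q^{(i)})=0,\quad
r_u(q^\dagger,u^\dagger)-r_u(q,u)-(u^\dagger-u)=0.
\]
Thus only the $r_q^{(0)}$ component contributes, and the deviation from the identity reduces to
\[
r^{(0)}_\textup{diff}(q^\dagger,u^\dagger;q_0,u_0)-r^{(0)}_\textup{diff}(q,u;q_0,u_0),
\]
whose expansion is exactly the one worked out in \eqref{est_rdiff}.

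The main (and only) work is then to bound the three summands in \eqref{est_rdiff} by a constant times $\rho\cdot\|(q,u)-(q^\dagger,u^\dagger)\|_{Q_J\times V_J}$. This is where the all-at-once argument becomes strictly easier than the reduced one: because $u$ is an independent coordinate of $X$, the bound on $\|u-u^\dagger\|_{V_J}$ via \eqref{udiff} is no longer needed, and the Implicit Function Theorem used in Corollary~\ref{cor:convfrozenNewton_red} is unnecessary. Continuous differentiability of $B$ on $U$ yields
\[
\|B(u)-B(u^\dagger)-B'(u_0)(u-u^\dagger)\|_{L(Q_J,W^*_J)}
\leq c_1(\rho)\|u-u^\dagger\|_{V_J},
\]
with $c_1(\rho)\to 0$ as $\rho\to 0$, and local Lipschitz continuity of $B$ bounds $\|B(u)-B(u^\dagger)\|$ by $L\|u-u^\dagger\|$ and $\|B(u^\dagger)-B(u_0)\|$ by $L\rho$. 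Using boundedness of $B_0^{-1}$ from \eqref{B0reg}, each of the three terms in \eqref{est_rdiff} is controlled by $c(\rho)\|(q,u)-(q^\dagger,u^\dagger)\|_{Q_J\times V_J}$ with $c(\rho)\to 0$ as $\rho\to 0$.

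Consequently, for $\rho$ small enough \eqref{rid} holds with some $c\in(0,1)$, and Theorem~\ref{thm:convfrozenNewton} applies verbatim in the Hilbert space $X=Q_J\times V_J$, delivering well-definedness of the iterates, their confinement to $\mathcal{B}_\rho^{Q_J\times V_J}(q^\dagger,u^\dagger)$, and the claimed convergence $\|q_{n_*(\delta)}^\delta-q^\dagger\|_{Q_J}\to 0$, $\|u_{n_*(\delta)}^\delta-u^\dagger\|_{V_J}\to 0$ as $\delta\to 0$ (and the noise-free limit $n\to\infty$). The only delicate step is ensuring smallness of $c(\rho)$ uniformly in $u$ (not just in $q$), but this is immediate from continuous differentiability of $B$ on the full neighborhood $U$ in $Q_J\times V_J$.
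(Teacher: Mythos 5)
Your proposal is correct and follows essentially the same route as the paper: the paper's own (much terser) proof likewise reduces \eqref{rid} to the difference of $r^{(0)}_\textup{diff}$ values via \eqref{est_rdiff} with $u$, $u^\dagger$ now independent coordinates, and concludes by continuous differentiability of $B$ and smallness of $\rho$ together with the definition \eqref{def_r_aao}. Your additional explicit remarks (that the $r_q^{(i)}$, $i\geq 1$, and $r_u$ components contribute nothing, and that neither \eqref{udiff} nor the Implicit Function Theorem is needed here) are exactly the simplifications the paper alludes to at the start of Section~\ref{sec:ex_aao}.
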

\begin{proof}
Here we can just use \eqref{est_rdiff} (without $u$, $u^\dagger$ depending on $q$) to verify
\[
\|r^{(0)}_\textup{diff}(q^\dagger,u^\dagger;q_0,u_0)-r^{(0)}_\textup{diff}(q,u;q_0,u_0)\|_{Q^{(0)}_J}
\leq c\Bigl(\|q-q^\dagger\|_{Q_J}^2+\|u-u^\dagger\|_{V_J}^2\Bigr)^{1/2},
\]
by continuous differentiability of $B$ and smallness of $\rho$, which together with the definition \eqref{def_r_aao} of $r$ implies \eqref{rid}.
\end{proof}

\section{Some concrete examples}\label{sec:ex}

\begin{example}[identification of a potential revisited]\label{ex:pot_revisited} 
\normalfont
Examples~\ref{ex:pot_ell} and ~\ref{ex:pot_trans} 
\[
-\Delta_N u +q\cdot u = f+\bar{h},\qquad
\revision{D_t^M}u-\Delta_N u +q\cdot(u\plushbar) = 0
\]
are covered by the framework of Section~\ref{sec:exclass} with 
\[
\begin{aligned}
& D=-\Delta_N && B(u)q=q\cdot u -f-\bar{h}&&\textup{ in the elliptic case}\\
& D=\revision{D_t^M}-\Delta_N&&B(u)q=q\cdot (u\plushbar)&&\textup{ in the 
%parabolic 
\revision{transient}
case}
\end{aligned} 
\]
$Q:=L^2(\Omega)$, and $V$, $W^*$ as in \eqref{VWell}, \eqref{VWtrans}, respectively;
First of all we consider the single experiment case $J=0$ but later on, in order to also obtain uniqueness, will describe multiple observations with $J=\mathbb{N}$ in the elliptic 
and $J=(0,T)$ in the 
%parabolic 
\revision{transient}
case.

We wish to recall the fact that \eqref{rangeinvar_diff} is here much easier to verify than \eqref{rangeinvar}, since instead of needing $S'(q)$ in \eqref{Rs_ell} in the reduced setting, the mapping $r$ is simply defined by pointwise multiplication 
\[
\begin{aligned}
&r(q)=\frac{S(q)\plusgbar}{S(q_0)\plusgbar}\cdot(q-q_0) 
\textup{ in the reduced setting},\\ 
&r_q(q,u)=\frac{u\plusgbar}{u_0\plusgbar}\cdot(q-q_0)\textup{ in the all-at-once setting.} 
\end{aligned}
\]
(where in the 
%parabolic 
\revision{transient}
case we have to add $\bar{h}$ in the numerator and denominator).
As mentioned above, boundedness away from zero of the denominators can be taken care of in the elliptic and 
%parabolic 
\revision{transient}
case, e.g., by maximum principles and a proper choice of the boundary conditions.

Alternatively, division by $S(q_0)\plusgbar$ or $u_0\plusgbar$ can be avoided by redefining the norm in $\widetilde{Q}$ as a weighted $L^2$ norm $\|\hat{r}\|_{\tilde{Q}}:=
\|\hat{r}\cdot u_0\|_{L^2(\Omega)}$ while using the ordinary $L^2$ norm on $Q$, i.e., 
$\|q\|_{Q}:=  \|q\|_{L^2(\Omega)}$, see \eqref{Qtil}. 
%likewise in the parabolic case with $\|\hat{r}\|_{\tilde{Q}}:=\|\hat{r}\cdot u_0\|_{H^s(0,T;L^2(\Omega))}$, $\|q\|_{Q}:=  \|q\|_{H^s(0,T;L^2(\Omega))}$.
%Indeed, we have $W^*=Q=L^2(\Omega)$ here.
However, the requirement of assuming $S(q_0)$ or $u_0$ to be bounded away from zero returns when proving differentiability of $B$ as an operator $V_J\to L(\widetilde{Q}_J,W^*_J)$.

%This is relevant in applications where the state tends to have zeros, such as wave equations in time or frequancy domain, 
%\[
%u_{tt}-\Delta_N u +q\cdot u = 0,\qquad
%-\Delta_N u +(q-\omega^2)\cdot u = 0,
%\]
%where also the rest of the analysis goes through completely analogously to the parabolic case above in the all-at-once setting (whereas the reduced setting hinges on PDE estimates that are of course different for the Helmholtz and the wave equation as compared to the heat equation). 

A proper choice of $P$ for verifying (a) in \eqref{NKNP} heavily depends on uniqueness arguments and therefore on the type of observations, cf. Remark~\ref{rem:NKNP_ex}.
We will now comment on this with the following settings known from the literature, as well as the corresponding extension sets $J$
\begin{itemize}
\item elliptic case: Neumann-to-Dirichlet (N-t-D) map, $J=\mathbb{N}$; 
%define $\Lambda\in L(H^{1/2}(\partial\Omega),L^2(\partial\Omega))$ in order to have $u^j\in H^2(\Omega)\subseteq L^\infty(\Omega)$.
\item transient %(parabolic, hyperbolic or more general time fractional order) 
case: time trace measurements, $J=(0,T)$; 
%\item Helmholtz case: multiple frequency measurements, $J=(\underline{\omega},\overline{\omega})$.
\end{itemize}
We target our investigations at the two frozen Newton methods, where by Corollaries~\ref{cor:convfrozenNewton_red}, \ref{cor:convNewton0_red}, \ref{cor:convfrozenNewton_aao},  \ref{cor:convNewton0_aao}, 
%we can make use of an alternative parameter space $\widetilde{Q}^{(0)}_J$ 
and due to $r'(q_0)=\textup{Id}$ we only have to verify the relatively simple nullspace condition \eqref{NKNP_ex}.
With the penalty operators defined by 
%\[
%\begin{aligned}
%(Pq)(j)=&\sum_{\ell\in\mathbb{N}} \frac{|q(j)-q(\ell)|^2}{|j-\ell|^{2s+1}}\mbox{ for }J=\mathbb{N}, \quad s>0\\
%(Pq)(j)=&\begin{cases}\int_{(0,T)}\frac{|q(j)-q(\ell)|^2}{|j-\ell|^{2s+1}}\, d\ell & s\in (0,1)\\
%\frac{\partial q}{\partial j} &s=1
%\end{cases} \mbox{ for } J=(0,T)
%\end{aligned}
%\]
\revision{
\[
\begin{aligned}
(Pq)(j)=&q(j)-\frac{\sum_{\ell\in\mathbb{N}} \ell^{-2}q(\ell)}{\sum_{\ell\in\mathbb{N}} \ell^{-2}}\mbox{ for }J=\mathbb{N}\\
(Pq)(j)=&q(j)-\frac{\int_0^T (1+\ell)^{-2}q(\ell)\, d\ell}{\int_0^T (1+\ell)^{-2} \, d\ell}\mbox{ for }J=(0,T)
\end{aligned}
\]
(possibly including $T=\infty$),
}
verification of \eqref{NKNP_ex} amounts to proving linearized uniqueness of a potential depending only on $x$, which can be done by well-known arguments that we here shortly recall for the convenience of the reader.

\paragraph{Elliptic case.}
The observations are defined by boundary observations of $u$ according to the N-t-D map $\Lambda\in L(H^{-1/2}(\partial\Omega),H^{1/2}(\partial\Omega))$, defined by 
\begin{equation}\label{Lambda_pot}
\Lambda h =\textup{tr}_{\partial\Omega}u 
\textup{ where $u$ solves $-\Delta u+q(x)\cdot u=f$ with $\partial_\nu u=h$ on $\partial\Omega$}
\end{equation}
Considering the N-t-D map as an operator 
$\Lambda\in L(H^{-1/2}(\partial\Omega),L^2(\partial\Omega))$ 
(in view of the fact that only values and not derivatives can be measured)
it is compact and thus can be represented by means of the sequence of the images of a complete orthonormal sequence $(\varphi^j)_{j\in\mathbb{N}}$ in the separable Hilbert space $H^{-1/2}(\partial\Omega)$, 
\[
g^j=\textup{tr}_{\partial\Omega}u^j 
\textup{ where $u^j$ solves $-\Delta u+q(x)\cdot u=f$ with $\partial_\nu u^j=\varphi^j$ on $\partial\Omega$}\,, \quad j\in\mathbb{N}.
\]
The sequence $(\varphi^j)_{j\in\mathbb{N}}$ can be obtained from a complete (in $H^{-1/2}(\partial\Omega)$) sequence of given excitations $(h^j)_{j\in\mathbb{N}}$ by means of Gram-Schmidt orthonormalization (skipping linearly dependent elements in the sequence $(h^j)_{j\in\mathbb{N}}$).

Using the weak form of the PDE, we get the model equations 
\begin{equation}\label{Aj_ell_pot}
\langle A^j(q,u^j),w\rangle_{W^*,W}
:=\int_\Omega \Bigl(\nabla u^j\cdot\nabla w+qu^jw\Bigr)\, dx
-\int_{\partial\Omega}\varphi^j w \,d\Gamma(x) \,, w\in W:=H^1(\Omega)
\end{equation}
for $u^j\in V:=H^1(\Omega)$, and the observation equations
\begin{equation}\label{Cj_diffabs}
C^j(u^j)=\textup{tr}_{\partial\Omega}u^j.
\end{equation}
Thus we can use the formalism from Section \ref{sec:exclass} with 
$I=0$, $q^{(0)}=q$, $J=\mathbb{N}$, $D^j=-\Delta_N$, 
$\langle B^{0,j}(u^j)q,w\rangle_{W^*,W}=\int_\Omega q\, u^j \, w\, dx$.

Assuming $q\in\nsp(P)$ means that the 
%extension of $q$
\revision{artificial dependency of $q$ on $j$}
\Margin{R2 18.}
is formally undone and to verify \eqref{NKNP_ex}, we can proceed by proving that an only space dependent $q$ lying in $\nsp(\mathbf{F}'(q_0))$ must vanish.
(Due to the identity \eqref{nsp_aao} this also covers the all-at-once case.)
\revision{For the details we refer to Section~\ref{sec_LinUni_pot_rev_ell} in Appendix~\ref{Appendix:LinUniqueness}.}

\paragraph{Transient case.}
The proof of \eqref{NKNP_ex} we choose to present here relies on taking the Laplace transform (to this end, we assume to have observations on the whole positive time line $T=\infty$) and applies to a general transient model with 
\[
Du =\sum_{m=1}^M \mathcal{M}_m* \partial_t^m u+\sum_{n=0}^N \mathcal{N}_n*\mathcal{A}^{\beta_n} \partial_t^n u
\]
where $\mathcal{A}$ is a selfadjoint (with respect to a possible weighted $L^2$ inner product $L^2_w$ on $\Omega)$), nonnegative definite operator with domain $V\subseteq L^\infty(\Omega)$, eigensystem $(\lambda_i,\varphi_i^k)_{i\in\mathbb{N},k\in K^i}$ and spectral powers $\mathcal{A}^{\beta_n}$ defined by
$\mathcal{A}^\beta v=\sum_{i\in\mathbb{N}}\lambda_i^\beta \sum_{k\in K^i} \langle v,\varphi_i^k\rangle_{L^2_w(\Omega)}\varphi_i^k$, 
the star $*$ denotes convolution $(v*w)(t):=\int_0^tv(t-s)w(s)\, ds$,
and the kernel functions $\mathcal{M}_m$, $\mathcal{N}_n$ are assumed to have Laplace transforms $\widehat{\mathcal{M}}_m$, $\widehat{\mathcal{N}}_n$   
lying in $L^\infty(\mathbb{C})$.
Besides the parabolic and hyperbolic cases with $N=0$, $\mathcal{N}_0=\delta$ (the delta distribution centered a zero with $\widehat{\mathcal{N}}_0\equiv1$), $\beta_0=1$, and either $M=1$, $\mathcal{M}_1=\delta$ or $M=2$, $\mathcal{M}_1=0$, $\mathcal{M}_2=\delta$ respectively, this also covers many viscoelastic models, see, e.g., \cite{Atanackovic:2014,BagleyTorvik:1986,frac_TUM,oparnica2020well,saedpanah2014well} and fractional damping models in acoustics see, e.g., \cite{Holm:2019,fracPAT,fracJMGT,Szabo:1994}.
\\
A uniqueness proof only needing a finite time interval but more specific to the parabolic or hyperbolic setting could be carried out along the lines of proofs in, e.g., \cite{Isakov:2006} and the citing literature.
\\ 
We set $J=(0,\infty)$, $V_J\subseteq \{v\in C([0,\infty);H):\frac{\partial^\ell v}{\partial t^\ell}(t=0)=0\}$ for some Hilbert space $H\supseteq V$ (enforcing homogeneous initial conditions; inhomogeneous ones cound be incorporated into the right hand side $f$ and the offet $\bar{h}$). Moreover we assume that $u_0+\bar{h}$ is a space-time separable function $(u_0+\bar{h})(x,t)=\phi(x)\psi(t)$; in the reduced case where $u_0=S(q_0)$ this can be achieved by an appropriate choice of the excitation  $f$, $\bar{h}$, see, e.g., \cite{nonlinearity_imaging_fracWest}.
The observation operator is supposed to be time invariant, bounded and linear $(Cu)(t)=C_0 u(t)$ with $C_0\in L(H,Y)$.
\revision{In this setting, the intersection of the nullspaces of $\mathbb{F}'(q_0,u_0)$ and $P$ can be shown to be trivial, see Section~\ref{sec_LinUni_pot_rev_trans} in Appendix~\ref{Appendix:LinUniqueness}, which implies \eqref{NKNP_ex}.}

%\medskip 

\begin{remark}\label{rem:finaltimeobs}
Instead of time trace observations, final observations in all of $\Omega$ for a fixed time instance would actually be simpler, allowing to set $J=\{0\}$ and employ an easy proof of linearized uniqueness; also, the inverse problem of recovering $q(x)$ is known to be less ill-posed than with time trace data, cf. e.g., \cite{Cannon:1975}.   
Since our main motivation was to allow for iterative or variational reconstruction from boundary measurements, we do not go into further detail about this here.
\end{remark} 
\end{example}

\begin{example}[combined diffusion and absorption identification]\label{ex:diffabs}
\normalfont
Identify $a(x)$ and $c(x)$ (that is, $q=(a,c)$) in 
\begin{equation}\label{PDE_diffabs}
-\nabla\cdot(a\nabla u)+cu=0\textup{ in }\Omega  
\end{equation}
in a domain satisfying \eqref{Omega} from boundary observations of $u$. 
%given by the N-t-D map $\Lambda\in L(H^{-1/2}(\partial\Omega),H^{1/2}(\partial\Omega))$, defined by 
%\begin{equation}\label{Lambda}
%\Lambda h =\textup{tr}_{\partial\Omega}u 
%\textup{ where $u$ solves \eqref{PDE_diffabs} with $\partial_\nu u=h$ on $\partial\Omega$}
%\end{equation}
%We denote the N-t-D map $\Lambda_0$ of a single $\lambda=0$ by $\Lambda$.
This problem arises, e.g., in steady-state diffuse optical tomography, see, e.g., \cite{ArridgeSchotland:2009,GibsonHebdenArridge:2005,Harrach:2009};
\revision{ for an analysis of a finite element approxiation of this problem we refer to \cite{Quyen2020}.}
\Margin{R1 8.}

To give an idea on uniqueness for this problem, we quote some identifiability results from the literature:
\begin{itemize}
\item uniqueness of $a$ and $c$ from N-t-D maps $\Lambda_\lambda\in L(H^{-1/2}(\partial\Omega),H^{1/2}(\partial\Omega))$ for $-\nabla\cdot(a\nabla u)+(c-\lambda)u=0$, for all $\lambda\geq\lambda_*$, and some fixed $\lambda_*\in\mathbb{R}$, provided $a\in W^{2,\infty}(\Omega)$, $\revision{c}\in L^\infty(\Omega)$ 
\Margin{R1 7.}
and $\nabla a$ is known on $\partial\Omega$); 
see \cite[proof of Corollary 1.7]{CanutoKavian:2004}\footnote{he same result would also provide uniqueness from spectral data, but this cannot written by means of a linear operator $C$};
\item uniqueness of effective absorption $\tilde{c}:=\frac{\Delta\sqrt{a}}{\sqrt{a}}+\frac{c}{a}$ as well as the jumps of $a$ and $\partial_\nu a$ on the discontinuity set from a single N-t-D map $\Lambda=\Lambda_0\in L(L^2(\partial\Omega),L^2(\partial\Omega))$ for piecewise analytic $a$ and $c$;
see Theorem 2.2 in \cite{Harrach:2012}; a counterexample on uniqueness for general $a$, $c$ can be found in \cite{ArridgeLionheart:1998};
\item uniqueness of $a$ or of $c$ alone from a single N-t-D map $\Lambda=\Lambda_0\in L(H^{-1/2}(\partial\Omega),H^{1/2}(\partial\Omega))$, 
\begin{itemize}
\item for $c\in L^p(\Omega)$, $p>\frac{d}{2}$ if $d\geq3$, $p>1$ if $d=2$; 
see 
%\cite{Isakov:1998} 
\cite{Isakov:2006} 
for $d=2$, 
\cite{Nachman:1988} for $d\geq3$; 
%as quoted in  \cite[Theorem 4.1]{CanutoKavian:2004};
\item for $a\in C^2(\Omega)$ if $d\geq3$ \cite{SylvesterUhlmann:1987},
for $a\in L^\infty(\Omega)$ if $d=2$ \cite{AstalaPaeivarinta:2006}.
\end{itemize}
\end{itemize}

\begin{comment}
The kernel of the integral equation representation of the N-t-D operator for a second order linear elliptic PDE  $\Lambda:H^{-1/2}(\partial\Omega)\to H^{1/2}(\partial\Omega)$, is an element $k\in H^{1/2}(\partial\Omega;H^{1/2}(\partial\Omega))$, 
%so that for any $h\in H^{-1/2}(\partial\Omega)$ we can write $(\Lambda h)(x) = (k(x),h)_{H^{-1/2}(\partial\Omega)}$
defined by $\langle h,k(x)\rangle_{H^{-1/2}(\partial\Omega),H^{1/2}(\partial\Omega)}
:=(\Lambda h)(x)$ a.e. for any $h\in H^{-1/2}(\partial\Omega)$ in the variational sense of 
$\langle\tilde{h},\langle h,k(\cdot)\rangle_{H^{-1/2}(\partial\Omega),H^{1/2}(\partial\Omega)}
\rangle_{H^{-1/2}(\partial\Omega),H^{1/2}(\partial\Omega)} 
= \langle\tilde{h},\Lambda h\rangle$ for $\tilde{h},h\in H^{-1/2}(\partial\Omega)$; 
here $\langle\cdot,\cdot\rangle_{H^{-1/2}(\partial\Omega),H^{1/2}(\partial\Omega)}$ denotes the dual pairing.
Thus, Neumann-to-Dirichlet observations for elliptic PDEs can be formulated with $J=\partial\Omega$ and the data space $Y_J= H^{1/2}(\partial\Omega;H^{1/2}(\partial\Omega))$ or $Y_J= L^2(\partial\Omega;H^{1/2}(\partial\Omega))$.
\end{comment}

To recover $a$ and $c$ simultaneously, we therefore consider the N-t-D maps $\Lambda_\lambda\in L(H^{-1/2}(\partial\Omega),H^{1/2}(\partial\Omega))$ for all $\lambda\geq0$.
As in in the elliptic case of Example~\ref{ex:pot_revisited}, we think of the N-t-D map as represented by means of the sequence of the images of a basis $(\varphi^n)_{n\in\mathbb{N}}$ of $H^{-1/2}(\partial\Omega)$, 
\begin{equation}\label{Lambda_diffabs}
\begin{aligned}
&g^{\lambda,n}=\textup{tr}_{\partial\Omega}u^{\lambda,n}\\ 
&\textup{where $u^{\lambda,n}$ solves $-\nabla\cdot(a\nabla u)+(c-\lambda)u=0$ with $\partial_\nu u^n=\varphi^n$ on $\partial\Omega$}\,, \quad n\in\mathbb{N}, \quad \lambda\geq0.
\end{aligned}
\end{equation}

Using the weak form of \eqref{PDE_diffabs}, we get the model equations 
\begin{equation*}%\label{Aj_diffabs}
\begin{aligned}
&\langle A^{\lambda,n}(c,a,u^{\lambda,n}),w\rangle_{W^*,W}
:=\int_\Omega \Bigl(a\nabla u^{\lambda,n}\cdot\nabla w+(c-\lambda)u^{\lambda,n}w\Bigr)\, dx
-\int_{\partial\Omega}\varphi^n w \,d\Gamma(x) \,,\\ 
&w\in W:=H^1(\Omega)
\end{aligned}
\end{equation*}
for $u^{\lambda,n}\in V:=H^1(\Omega)$, and the observation equations
\begin{equation*}%\label{Cj_diffabs}
C^{\lambda,n}(u^{\lambda,n})=\textup{tr}_{\partial\Omega}u^{\lambda,n}.
\end{equation*}
Thus we can use the formulation from Section \ref{sec:exclass} with 
$I=1$, $q^{(0)}=c$, $q^{(1)}=a$, $J=[0,\infty)\times\mathbb{N}$, 
$D^{\lambda,n}=-\lambda\cdot$, 
$\langle B^{0,\lambda,n}(u^{\lambda,n})c,w\rangle_{W^*,W}=\int_\Omega c\,u^{\lambda,n}\, w\, dx$, 
$\langle B^{1,\lambda,n}(u^{\lambda,n})a,w\rangle_{W^*,W}=\int_\Omega a \nabla u^{\lambda,n} \cdot\nabla w\, dx$.

It is readily checked that the range invariance condition \eqref{rangeinvar_diff} holds with $K=\mathbf{F}'(c_0,a_0)$ (or $K=\mathbb{F}'(c_0,a_0,u_0)$) and 
\[
r(c,a) = 
(c-c_0
%+r^{(0)}_\textup{diff}(c,a,S^{\lambda,n}(c,a);c_0,a_0,S^{\lambda,n}(c_0,a_0))
-\frac{1}{u_0^{\lambda,n}}
\Bigl(-\nabla\cdot((a-a_0)\nabla(u^{\lambda,n}-u_0^{\lambda,n}))+(c-c_0)\cdot(u^{\lambda,n}-u_0^{\lambda,n})\Bigr)
,a-a_0) 
\]
(or $r(c,a,u^{\lambda,n})=(r(c,a),u^{\lambda,n}-u^{\lambda,n}_0)$)
where in the reduced case we have to use $u^{\lambda,n}:=S^{\lambda,n}(c,a)$ as in \eqref{Lambda_diffabs}, cf. \eqref{def_r_red}, \eqref{def_r_aao}. 
Extension only affects the absorption coefficient $c(x)$, which becomes $c(x,\lambda,n)$, while the diffusion coefficient $a$ remains only $x$ dependent.
\\
We use the spaces $Q_J=
%H^s(J;L^2(\Omega))
L^2(J;L^2(\Omega))
\times (L^\infty(\Omega)\cap W^{1,2P/(P-2)}(\Omega)$ with $P\in(2,\infty)$, $1-\frac{d}{2}\geq-\frac{d}{P}$, $V=H^2(\Omega)$, $W^*=L^2(\Omega)$, $Y=L^2(\partial\Omega)$.
\\
With the penalty operator defined by 
%\[
%\begin{aligned}
%(Pc)(\lambda,n)=&\sum_{\ell\in\mathbb{N}} \frac{|c(\lambda,n)-c(\lambda,\ell)|^2}{|n-\ell|^{2s+1}}+\begin{cases}\int_{(0,\infty)}\frac{|c(\lambda,n)-c(\ell,n)|^2}{|\lambda-\ell|^{2s+1}}\, d\ell & \textup{ if }s\in (0,1)\\
%\frac{\partial c}{\partial \lambda} &\textup{ if }s=1,
%\end{cases}
%\end{aligned}
%\]
\revision{
\[
\begin{aligned}
(Pc)(\lambda,n)=c(\lambda,n)-\frac{\sum_{\ell\in\mathbb{N}} \ell^{-2}\int_{(0,\infty)} (1+\kappa)^{-2} c(\kappa,\ell)\, d\kappa}{\sum_{\ell\in\mathbb{N}} \ell^{-2}\int_{(0,\infty)} (1+\kappa)^{-2} \, d\kappa},
\end{aligned}
\]
}
verification of \eqref{NKNP_ex} amounts to proving linearized uniqueness of a diffusion coefficient and a potential, both depending only on $x$, which can be done similarly to the elliptic case of recovering a potential,
\revision{see Section~\ref{sec_LinUni_diffabs} in Appendix~\ref{Appendix:LinUniqueness}.}
\end{example}

\begin{example}[Reconstruction of a boundary coefficient]\label{ex:bndy} 
\normalfont
As a final example in which \revision{ we }
\Margin{R2 21.}
actually have uniqueness without needing to extend the coefficient we consider the problem of recovering the Robin coefficient $q$ in the elliptic boundary value problem
\begin{equation}\label{PDE_bndy}
\begin{aligned}
-\Delta u &= \ell\mbox{ in }\Omega\\
\partial_\nu u+ q\cdot\Phi(u)&= h\mbox{ on }\Gamma_R\subseteq\partial\Omega\\
\partial_\nu u&= h\mbox{ on }\Gamma_N\subseteq\partial\Omega\setminus \Gamma_R\\
u&= 0\mbox{ on }\Gamma_D:=\partial\Omega\setminus(\Gamma_R\cup\Gamma_N)
\end{aligned} 
\end{equation}
from observations $y=Cu$ of $u$, for example Dirichlet data on some part of the boundary $\partial\Omega$ of $\Omega$ or on some other surface $\Sigma$ contained in the closure of the PDE domain $\Omega$. Since we will not appeal to elliptic regularity, it suffices to have a Lipschitz domain $\Omega\subseteq\mathbb{R}^d$, $d\in\{1,2,3\}$ here.

This has applications for example in corrosion detection, and heat conduction, see, e.g., 
\cite{ChaabaneJaoua:1999,Inglese:1997}, where the latter application also motivates the use of a potentially nonlinear term $\Phi(u)$ in the boundary condition. The mentioned papers and the references therein also provide uniqueness of $q$ from a single boundary observation.

The weak form of \eqref{PDE_bndy} reads as 
\begin{equation}\label{PDE_bndy_weak}
\begin{aligned}
&u\in V:=H_D^1(\Omega)\textup{ and }\forall v\in W:=H_D^1(\Omega)\\
&\langle A(q,u),v\rangle_{W^*,W}:=\int_\Omega \nabla u \cdot \nabla v\, dx + \int_{\Gamma_R} q \cdot \Phi(u)\, v\, d\Gamma(x)
%\\&\hspace*{5cm}
- \int_\Omega \ell v\, dx- \int_{\Gamma_R\cup\Gamma_N} h\, v\, d\Gamma(x)=0
\end{aligned} 
\end{equation}
where $H_D^1(\Omega)=\{v\in H^1(\Omega)\, : \, \textup{tr}_{\Gamma_D}v=0\}$.
Here $\Phi:L^4(\Omega)\to L^4(\Omega)$ is supposed to be a known nonlinear operator, for example a superposition (or Nemitzky) operator induced by some function $\phi:\mathbb{R}\to \mathbb{R}$ with at most linear growth at zero and infinity.
 
This problem is therefore covered by the framework of Section~\ref{sec:exclass} with 
\[
\begin{aligned}
&\langle Du,v\rangle_{W^*,W} = \int_\Omega \nabla u \cdot \nabla v\, dx, \quad
\langle B(u)q,v\rangle_{W^*,W} = \int_{\Gamma_R} q\, \Phi(u)\, v\, d\Gamma(x), \quad\\
&\langle f,v\rangle_{W^*,W} = \int_\Omega \ell v\, dx+\int_{\Gamma_R\cup\Gamma_N} h\, v\, d\Gamma(x).
\end{aligned} 
\]

We set $K=\mathbf{F}'(q_0)$ (or $K=\mathbb{F}'(q_0,u_0)$) and according to \eqref{def_r_red} (or \eqref{def_r_aao}) set
\[
r(q) = (q-q_0+\frac{1}{\Phi(u_0)}\Bigl((\Phi(u)-\Phi(u_0))q-\Phi'(u_0)(u-u_0)q_0\Bigr)
\]
(or $r(q,u)=(r(q),u-u_0))$),
where in the reduced case $u$ has to solve \eqref{PDE_bndy} and likewise for $u_0$ with $q_0$ in place of $q$.

In the reduced setting we have to take care of well-posedness of the forward problem by restricting the domain to (similarly to \eqref{DF}) 
\begin{equation*}%\label{DFbndy}
\begin{aligned}
\mathcal{D}(\mathbf{F}):=\{q\in L^2(\partial\Omega): &\exists \bar{q}\in L^\infty(\Omega), 
\bar{q}\geq0
\textup{ a.e. and } 
%\\&
C_{\textup{PF}} L \Bigl(\|\textup{tr}_{\Gamma_R}\|_{H_D^1,H^{1/2}} C_{H^{1/2}, L^4}^{\Gamma_R}\Bigr)^2 \|q-\bar{q}\|_{L^2(\partial\Omega)}<1\}.
\end{aligned}
\end{equation*}
with $C_{\textup{PF}}$, $L$ as in \eqref{PoincareFriedrichs}, \eqref{growth} below.
%see, e.g., {Theorem 4.1.1 on page 80 of Nittka-thesis2010}
Here for simplicity we have assumed that the $d-1$ dimensional measure $\textup{meas}^{d-1}(\Gamma_D)>0$ so that a Poincar\'{e}-Friedrichs equation 
\begin{equation}\label{PoincareFriedrichs}
\forall v\in H_D^1(\Omega)\, : \ \|v\|_{H^1(\Omega)}\leq C_{\textup{PF}} \|v\|_{H^1_D(\Omega)}
\end{equation}
holds.
Moreover we impose the following monotonicity and continuity conditions on $\Phi$
\begin{equation}\label{growth}
\begin{aligned}
&\forall v,w\in L^4(\Gamma_R)\,\stackrel{a.e.}{\forall} x\in\Omega : \ 
(\Phi(v)-\Phi(w))\cdot (v-w)\geq0 
\textup{ and }|\Phi(v)-\Phi(w)|\leq L |v-w|\\
&\Phi:L^4(\Gamma_R)\to L^4(\Gamma_R)^* \textup{ hemicontinuous.}
\end{aligned} 
\end{equation}
(Note that $\Phi(u)$ is supposed to act as a pointwise multiplication but $\Phi$ does not necessarily need to be a Nemitzky operator, thus might as well be nonlocal.)
From these properties we can conclude (strict, due to the $D$ term) monotonicity, coercivity and hemicontinuity of the operator $D+B(\cdot)\bar{q}$ for any nonegative $\bar{q}\in L^\infty(\Omega)$. 
By the Browder-Minty Theorem on the separable Banach space $H_D^1(\Omega)$ this implies existence and uniqueness (and by the elliptic $D$ part also continuity) of 
$S(\bar{q}):=(D+B(\cdot)\bar{q})^{-1}(\tilde{f})$ for any $\tilde{f}\in H_D^1(\Omega)^*$.
Defining the fixed point map $\mathcal{M}:u\mapsto u_+:=(D+B(\cdot)\bar{q})^{-1}(f-B(u)(q-\bar{q}))$ on $H_D^1(\Omega)$ and proving its contractivity on a sufficiently small neighborhood of $\bar{q}$ yields well-definedness of $S(q)=(D+B(\cdot)q)^{-1}(f)$.
Indeed,  by testing the PDE for the difference
\[
D(u_{1+}-u_{2+})+(B(u_{1+})-B(u_{2+}))\bar{q}=
(B(u_1)-B(u_2))(\bar{q}-q)
\]
with $u_{1+}-u_{2+}$ and using \eqref{PoincareFriedrichs}, \eqref{growth} we obtain
\[
\begin{aligned}
&\frac{1}{C_{\textup{PF}}} \|u_{1+}-u_{2+}\|_{H^1(\Omega)}^2 
\leq \langle D(u_{1+}-u_{2+})+(B(u_{1+})-B(u_{2+}))\bar{q}, u_{1+}-u_{2+}\rangle_{W^*,W}\\
&= \langle (B(u_1)-B(u_2))(\bar{q}-q), u_{1+}-u_{2+}\rangle_{W^*,W}
=\int_{\Gamma_R} (\bar{q}-q)(\Phi(u_1)-\Phi(u_2)) (u_{1+}-u_{2+})\, d\Gamma\\
&\leq L \Bigl(\|\textup{tr}_{\Gamma_R}\|_{H_D^1,H^{1/2}} C_{H^{1/2}, L^4}^{\Gamma_R}\Bigr)^2 \|q-\bar{q}\|_{L^2(\partial\Omega)}\|u_{1}-u_{2}\|_{H^1(\Omega)} \|u_{1+}-u_{2+}\|_{H^1(\Omega)}.
\end{aligned}
\]

If additionally to \eqref{growth}, $\Phi:L^4(\Gamma_R)\to L^4(\Gamma_R)$ is continuously differentiable with $\Phi'(S(q_0))$ bijective and $\Phi'(S(q_0))^{-1}\in L(L^4(\Gamma_R), L^4(\Gamma_R))$, 
%\langle B(u)q,v\rangle_{W^*,W} = \int_{\Gamma_R} q\, \Phi(u)\, v\, d\Gamma(x)
%\langle B'(u)\uldu q,v\rangle_{W^*,W} = \int_{\Gamma_R} q\, \Phi'(u)\uldu\, v\, d\Gamma(x)
then in the function space setting
\[
Q=L^2(\Gamma_R), \quad V=W=H_D^1(\Omega)
\]
with $Y$ such that $C\in L(V,Y)$ the conditions of Corollaries~\ref{cor:var_red}, \ref{cor:convfrozenNewton_red}, \ref{cor:convNewton0_red} are satisfied with, a proper choice of $\mathcal{R}$ (e.g., $\mathcal{R}(q)=H^r(\Gamma_R)$ for some $r>0$) and $\mathcal{P}$. 
Indeed, 
%based on the above mentioned uniqueness results,  
we can set $\mathcal{P}=0$ in case of boundary observations $C=\textup{tr}_\Gamma$ for some open subset $\Gamma\subseteq\Gamma_N$, 
and verify triviality of $\nsp(\mathbf{F}'(q_0))$, where $\mathbf{F}'(q_0)$ is the mapping that takes $\uldq$ to $Cv$ with $v$ solving
\begin{equation*}%\label{PDE_bndy_lin}
\begin{aligned}
-\Delta v &= 0\mbox{ in }\Omega\\
\partial_\nu v+ q_0\cdot\Phi'(u_0)v&= -\uldq\cdot\Phi(u_0)\mbox{ on }\Gamma_R\\
\partial_\nu v&= 0\mbox{ on }\Gamma_N\\
v&= 0\mbox{ on }\Gamma_D.
\end{aligned} 
\end{equation*}
Thus if $Cv=\textup{tr}_\Gamma v=0$ then $v$ is harmonic in $\Omega$ with vanishing Cauchy data on $\Gamma$ and thus, due to Holmgren's Uniqueness Theorem, $v=0$ in $\Omega$.
From this we conclude that $F'(q_0)$ has trivial nullspace and therefore conditions \eqref{NKNP}, \eqref{NKNPx}, \eqref{NKNP0} as well as $q_0-q^\dagger\in \bigl(\nsp(\mathbf{F}'(q_0))\cap \nsp(P)\bigr)^\bot$, and $r(q^\dagger)\in \bigl(\nsp(\mathbf{F}'(q_0))\cap \nsp(Pr'(q_0)^{-1})\bigr)^\bot$
are satisfied with the trivial choice $P=0$.

Likewise we can apply Corollaries~\ref{cor:var_aao}, \ref{cor:convfrozenNewton_aao}, \ref{cor:convNewton0_aao} in the all-at-once setting even without having to assume \eqref{PoincareFriedrichs} and \eqref{growth}.
\end{example} 

Further examples satisfying a range invariance condition can be found, e.g., in \cite[Section 2.2]{NewtonKaczmarz}, \cite[Section 4]{diss}, and \cite{frac_space-potential,nonlinearity_imaging_both}.
%\cite{UVA}. 
\Margin{R2 22.}

\section*{Conclusion and Outlook}\label{sec:outlook}
While the use of range invariance conditions for proving convergence of regularization methods is known in the literature, their use has so far largely been limited to relatively simple model problems -- as is the case for other restrictions on the nonlinearity of the forward operator (such as the tangential cone condition) used in the analysis of solution methods for ill-posed problems.
In this paper, we suggest to overcome this limitation by possibly augmenting the dimensions of variability of the searched for parameter, thus allowing to establish validity of this condition for a much larger class of inverse problems, which indeed contains practically relevant examples.
The loss of uniqueness, which also impacts convergence of these methods (since the initial guess needs to be orthogonal to the nullspace of the linearized forward operator) is overcome by adding a penalty that in the limit enforces the original lower dimensional variability.

The analysis is carried out in a general framework (Section~\ref{sec:convana}) that covers both reduced and all-at-once formulations of inverse problems as demonstrated in Section~\ref{sec:exclass} and contains some practically relevant inverse problems of reconstruction from boundary observations as detailed in Section~\ref{sec:ex}. 

While -- for clarity of exposition -- we stayed with norm based misfit functionals in most of this paper, extension to more general functional should be considered in view of their practical usefulness for, e.g., modeling different types of data noise.
%also for $K$ and $r$

Also convergence rates under source conditions will be subject of future research.
\section*{Acknowledgment}
This work was partially supported by the Austrian Science Fund {\sc fwf}
under the grants P30054 and DOC78.

\renewcommand\appendixname{Appendix}
\begin{appendices}
\section{Details on examples from introduction} \label{Appendix:IntroEx}
\setcounter{example}{0}
\begin{example}[identification of a potential in an elliptic PDE]\label{ex:pot_ell}
\normalfont
Consider identification of the spatially varying potential $q\in L^2(\Omega)$ in the elliptic boundary value problem \eqref{PDE_pot} with $f\in L^2(\Omega)$, $h\in H^{1/2}(\partial\Omega)$, $\partial_\nu$ the normal derivative, from observations $y=Cu$ of $u$, for example measurements at the boundary of the domain $\Omega$, on which we here and in most of the further examples assume
\Margin{R2 3.}
\begin{equation}\label{Omega}
\Omega\subseteq\mathbb{R}^d, \ d\in\{1,2,3\}, \quad
\partial\Omega\in C^{1,1} \mbox{ or $\Omega$ polygonal and convex}
\end{equation}
With $q$ sufficiently close to a function that is positive and bounded away from zero (see \eqref{DF} below) we can avoid the problem of a nontrivial nullspace of the negative Neumann Laplacian, defined by its weak form $-\Delta_N:H^2(\Omega)\to L^2(\Omega)$
\begin{equation}\label{NeumannLapl}
\langle -\Delta_N u,v\rangle_{H^1(\Omega)^*, H^1(\Omega)} := \int_\Omega \nabla u\cdot\nabla v\, dx \quad \forall u,v\in H^1(\Omega).
\end{equation}
With this and $\bar{h}$ defined by 
\begin{equation}\label{hbar}
\langle \bar{h},v\rangle_{H^1(\Omega)^*, H^1(\Omega)} := \int_{\partial\Omega} h v\, d\Gamma(x) \quad \forall v\in H^1(\Omega).
\end{equation}
we can rewrite \eqref{PDE_pot} as an operator equation in $L^2(\Omega)$
\begin{equation}\label{pot_ell}
-\Delta_N u +q\cdot u = f+\bar{h},
\end{equation}
where $q\cdot:H^2(\Omega)\to L^2(\Omega)$ denotes the multiplication operator, which is bounded for $q\in L^2(\Omega)$, due to continuity of the embedding $H^2(\Omega)\to L^\infty(\Omega)$ (with constant denoted by $C_{H^2,L^\infty}^\Omega$ below). 

We can write the inverse problem in a reduced form \eqref{red} as $\mathbf{F}(q)=y$, or in an all-at-once form \eqref{aao} as $\mathbb{F}(q,u)=(0,y)^T$ with the respective reduced and all-at-once forward operators being defined by \eqref{opeq_ex:potell}.
With  
\begin{equation}\label{VWell}
Q:=L^2(\Omega),\quad V=H^2(\Omega),\quad W^*= L^2(\Omega),
\end{equation}
and $Y$ such that $C\in L(V,Y)$, the reduced forward operator $\mathbf{F}:Q\to Y$ is well-defined on 
\begin{equation}\label{DF}
\begin{aligned}
\mathcal{D}(\mathbf{F}):=\{q\in L^2(\Omega): &\exists \bar{q}\in L^\infty(\Omega), \gamma\in\mathbb{R}^+\, : \  \bar{q} \geq \gamma \textup{ a.e. and } \\
&\|(-\Delta_N+\bar{q}\cdot)^{-1}\|_{L^2\to H^2} C_{H^2, L^\infty}^\Omega \|q-\bar{q}\|_{L^2(\Omega)}<1\}
\end{aligned}
\end{equation}
due to the fact that by elliptic regularity, the operator $(-\Delta_N+\bar{q}\cdot):H^2(\Omega)\to L^2(\Omega)$ is boundedly invertible.
\\
In the all-at-once setting we have more freedom to choose the spaces. Fixing again $Q=L^2(\Omega)$, we may use any $V,W^*,Y$ such that $-\Delta_N+q\cdot\in L(V,W^*)$ and $C\in L(V,Y)$ holds.
For simplicity, we again take $V=H^2(\Omega)$ and correspondingly choose $W^*=L^2(\Omega)$.

In this function space setting, the linearizations of $\mathbf{F}$ and $\mathbb{F}$ given by \eqref{opeq_ex:potell_deriv}
can easily be verified to be Fr\'{e}chet derivatives. 
For any fixed $q_0\in L^2(\Omega)$, with $\mathbf{R}(q)$ according to \eqref{Rs_ell}, the range invariance \eqref{rangeinvar_red_aao}
is satisfied.
To show that the $R$ operators are close to the identity, using  
\[
\begin{aligned}
&(-\Delta_N+q_0\cdot)(-\Delta_N+q\cdot)^{-1} -\textup{id}=(q-q_0)\cdot(-\Delta_N+q\cdot)^{-1}\\ 
&S(q)-S(q_0)=-(-\Delta_N+q_0\cdot)^{-1}[(q-q_0)\cdot S(q)\revision{]}
\end{aligned}
\]
\Margin{R2 4.}
and the estimates
\[
\begin{aligned}
&\|\mathbf{R}(q)\uldq-\uldq\|_Q \\
&=
\|\tfrac{1}{S(q_0)\plusgbar}\Bigl(
\Bigl\{(-\Delta_N+q_0\cdot)(-\Delta_N+q\cdot)^{-1} -\textup{id}\Bigr\}[\uldq\cdot S(q)]
%\\&\hspace*{8cm}
+\uldq\cdot(S(q)-S(q_0))\|_{L^2(\Omega)}\\
&\leq 
\|\tfrac{1}{S(q_0)\plusgbar}\|_{L^\infty(\Omega)}
\|(q_0-q)\cdot\bigl((-\Delta_N+q\cdot)^{-1}[\uldq\cdot S(q)]\bigr)\\
&\hspace*{4cm}-\uldq\cdot \bigl((-\Delta_N+q_0\cdot)^{-1}[(q-q_0)\cdot S(q)\bigr)\|_{L^2(\Omega)}\\
&\leq 
\|\tfrac{1}{S(q_0)\plusgbar}\|_{L^\infty(\Omega)}\|q_0-q\|_{L^2(\Omega)}\|\uldq\|_{L^2(\Omega)}
\|S(q)\|_{L^\infty(\Omega)}C_{H^2,L^\infty}^\Omega\\
&\hspace*{2cm}\Bigl(\|(-\Delta_N+q\cdot)^{-1}\|_{L^2\to H^2}+\|(-\Delta_N+q_0\cdot)^{-1}\|_{L^2\to H^2}\Bigr)
 \end{aligned}
\]
\begin{equation}\label{RIest_ell_App}
\begin{aligned}
&\|\mathbb{R}(q,u)(\uldq,\uldu)-(\uldq,\uldu)\|_{Q\times V} \\
&=\|\tfrac{1}{u_0\plusgbar}\cdot\bigl(\uldq\cdot u+(q-q_0)\cdot\uldu \bigr)-\uldq\|_Q\\
&=\|\tfrac{1}{u_0\plusgbar}\cdot\bigl(\uldq\cdot(u-u_0))+(q-q_0)\cdot\uldu \bigr)\|_Q\\
&\leq \|\tfrac{1}{u_0\plusgbar}\|_{L^\infty(\Omega)}C_{H^2,L^\infty}^\Omega
\Bigl(\|\uldq\|_{L^2(\Omega)}\|u-u_0\|_{H^2(\Omega)}+
\|q-q_0\|_{L^2(\Omega)}\|\uldu\|_{H^2(\Omega)}\Bigr),
 \end{aligned}
\end{equation}
we obtain \eqref{RIest_ell}.
\end{example}

\begin{example}[identification of a potential in a 
%parabolic 
\revision{time-dependent}
PDE] \label{ex:pot_trans}
\normalfont
Consider identification of $q=q(x)$ in the initial boundary value problem \eqref{PDE_pot_par} with $f\in L^2(0,T;L^2(\Omega))$, $h\in L^2(0,T;H^{1/2}(\Omega))$, and $\Omega$ satisfying \eqref{Omega}. 
Using an extension $\bar{h}$ of the initial and boundary data such that $\bar{h}_t-\Delta\bar{h}=f$ in $\Omega\times(0,T)$, $\partial_\nu\bar{h}=h$ on $\partial\Omega\times(0,T)$, $\bar{h}(x,0)=u_0(x)$, $x\in\Omega$ and $\hat{u}:=u-\bar{h}$, we can write the forward problem as \eqref{abstrODE}.

%Using the analytic semigroup $(e^{A(q)t})_{t>0}$ generated by $A(q):=-(-\Delta_N +q\cdot)$ on $L^2(\Omega)$ with domain $\mathcal{D}(A(q))=H^2(\Omega)$ for $q\in\mathcal{D}(\mathbf{F})\subseteq L^2(\Omega)$ as in \eqref{DF}, we can write the solution $u$ by means of the variation of constants formula
%$u(t)=-\int_0^t e^{A(q)(t-s)} [q\cdot\bar{h}](s)\, ds.$

With the operator
\[
T(\widetilde{q}):\, V\to W^*, \quad 
v\mapsto \Bigl(t\mapsto \revision{D_t^M}v(t)-\Delta_Nv(t) +\widetilde{q}(t)\cdot v(t)\Bigr)
\]
for $\widetilde{q}\in 
%H^s(0,T;\mathcal{D}(F))$, $s>\frac12$
\revision{L^2}(0,T;\mathcal{D}(F))$
we can define the reduced and all-at-once forward operators by \eqref{opeq_ex:potpar}
on $\tilde{\mathcal{D}}(\mathbf{F}):=
%H^s(0,T;\mathcal{D}(\mathbf{F}))
\revision{L^2}(0,T;\mathcal{D}(\mathbf{F}))
\subseteq X=Q=
%H^s(0,T;L^2(\Omega))
\revision{L^2}(0,T;L^2(\Omega))
$ with $\mathcal{D}(\mathbf{F})$ defined as in \eqref{DF}) and 
$\tilde{\mathcal{D}}(\mathbb{F}):=\tilde{\mathcal{D}}(\mathbf{F})\times V$, respectively. 

To prove that for any for $\widetilde{q}\in L^2(0,T;\mathcal{D}(F))$, the operator 
$T(\widetilde{q}):\, V\to W^*$ is indeed an isomorphism, the function spaces $V$ and $W^*$ have to be properly chosen.
For example, in the case of a strongly damped wave equation
\[
D_t^M u -\Delta_N u + q u = u_{tt}-\Delta_N u - b\Delta_N u_t + q u  
\]
with $b>0$ this holds true with the choice 
\begin{equation}\label{VWtrans}
\begin{aligned}
&V=L^\infty(0,T;H^2(\Omega))\cap W^{1,\infty}(0,T;H^1(\Omega))\cap H_\diamondsuit^2(0,T;L^2(\Omega)), \\ &W^*=L^2(0,T;L^2(\Omega)), \\
&H_\diamondsuit^2(0,T;L^2(\Omega))=\{v\in H^2(0,T;L^2(\Omega))\, : \, v(0)=0, v_t(0)=0\}.
\end{aligned}
\end{equation}
cf. e.g. \cite{KalLas:2009}.
Since $V\subseteq L^\infty(0,T;L^\infty(\Omega)$, we can invoke the elementary estimate  
\[
\|q\cdot u\|_{L^2(0,T;L^2(\Omega))} \leq 
\|q\|_{L^2(0,T;L^2(\Omega))} \|u\|_{L^\infty(0,T;L^\infty(\Omega))}.
\]
for establishing smallness of the difference between the $R$ operators and the identity similarly to \eqref{RIest_ell_App}.
We mention in passing that this does not work in the seemingly simpler parabolic case $M=1$, $D_t^M=\partial_t$, where a typical function space setting would be 
\begin{equation}\label{VWpar}
\begin{aligned}
&V=L^2(0,T;H^2(\Omega))\cap H_\diamondsuit^1(0,T;L^2(\Omega)), \\ &W^*=L^2(0,T;L^2(\Omega)), \\
&H_\diamondsuit^1(0,T;L^2(\Omega))=\{v\in H^1(0,T;L^2(\Omega))\, : \, v(0)=0\}.
\end{aligned}
\end{equation}
Here the use of a smooth cutoff function $\Phi$ and replacing the term $q\cdot u$ by $q\cdot\Phi(u)$ still enables an analysis, which come with additional technicalities, though.

\end{example}
\begin{example}[identification of a diffusion coefficient in an elliptic PDE]\label{ex:diff_ell} 
\normalfont
Consider identification of the coefficient $a(x)$ in the elliptic boundary value problem \eqref{PDE_EIT}
with $f\in H^1(\Omega)^*$, $h\in H^{-1/2}(\partial\Omega)$, and $\Omega$ satisfying \eqref{Omega} (as a matter of fact, a Lipschitz domain would suffice here, since we are not employing elliptic regularity in this example).

The forward operators in the reduced \eqref{red} and in the all-at-once formulation \eqref{aao} are given by \eqref{opeq_ex:diffusion} with $\Delta_{a,N}$ defined by \eqref{Delta_aN}, where $\bar{h}$ is defined by \eqref{hbar}.
In the reduced case, in order to achieve unique solvability of the Neumann problem, we assume $f\in L^1(\Omega)$, $h\in L^1(\partial\Omega)$, $\int_\Omega f(x)\, dx+\int_{\partial\Omega} h(x)\, d\Gamma(x)=0$, replace $H^1(\Omega)$ by 
$H_{\circ}^1(\Omega):=\{v\in H^1(\Omega)\, : \, \int_\Omega v(x)\, dx=0\}$ and define the domain of the forward operator as 
\begin{equation}\label{DF_diff_red}
\mathcal{D}(\mathbf{F}):=\{a\in L^\infty(\Omega)\, : \, \exists \gamma\in\mathbb{R}^+ \, : \
 a\geq\gamma\textup{ a.e.}\}.
\end{equation}
With $Q:=L^\infty(\Omega)$, $V=W=H_{\circ}^1(\Omega)$ and $Y$ such that $C\in L(V,Y)$, the reduced forward operator $\mathbf{F}:Q\to Y$ is well-defined on $\mathcal{D}(\mathbf{F})$.
For simplicity, we again use the same spaces (but skip the restriction to $\mathcal{D}(\mathbf{F})$) for defining the all-at-once forward operator $\mathbb{F}$.
\end{example}

\section{Proofs of some auxiliary results} \label{Appendix:Lemmas}
\begin{lemma}[\cite{Kirsch:2017}, extended to a Banach space setting]\label{lem:ABR}
For $X,Y$ Banach spaces, $A,B\in L(X,Y)$ the following equivalence holds
\[
\rng(A)=\rng(B) \ \Longleftrightarrow \ \exists R\in L(X,X): \ R^{-1}\in L(X,X)\mbox{ and }A=BR
\]
\end{lemma}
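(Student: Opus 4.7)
The reverse implication $(\Leftarrow)$ will be immediate: assuming $A = BR$ with $R \in L(X,X)$ bijective, surjectivity of $R$ gives
\[
\rng(A) = B(R(X)) = B(X) = \rng(B).
\]

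For the forward implication $(\Rightarrow)$, the plan is to construct $R$ in two stages: first build a bounded bijection on the quotients of $X$ by the nullspaces, then lift it back to $X$. First I would factor $A = \tilde A \circ \pi_A$ and $B = \tilde B \circ \pi_B$ through the canonical quotient maps $\pi_A : X \to X/\nsp(A)$ and $\pi_B : X \to X/\nsp(B)$, obtaining injective bounded maps $\tilde A$, $\tilde B$ sharing the common range $\rng(A) = \rng(B)$. Applying the open mapping theorem to $\tilde A$ and $\tilde B$, viewed as continuous bijections onto this common range (with its natural Banach-space topology), yields bounded inverses, so the composition $\tilde R := \tilde B^{-1} \circ \tilde A \in L(X/\nsp(A), X/\nsp(B))$ is a bounded bijection satisfying $\tilde B \tilde R = \tilde A$.

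Next I would lift $\tilde R$ to the desired $R \in L(X,X)$. This step calls for topological splittings $X = \nsp(A) \oplus N_A = \nsp(B) \oplus N_B$; under these the restrictions of the quotient maps to $N_A$ and $N_B$ are topological isomorphisms onto $X/\nsp(A)$ and $X/\nsp(B)$ respectively, and $\tilde R$ transfers to a bounded bijection $R_0 : N_A \to N_B$. Picking any bounded linear isomorphism $R_1 : \nsp(A) \to \nsp(B)$ and setting $R := R_0 \oplus R_1$ on $X = N_A \oplus \nsp(A)$ then yields a bounded bijection with $BR = A$ by construction, as $B$ annihilates $\nsp(B) = R_1(\nsp(A))$ and the $N_A$-component reproduces $A$ via $\tilde B \tilde R = \tilde A$.

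The main obstacle is the lifting step, which is routine in Hilbert spaces---the complements are given by orthogonal projection, and the required matching of the sizes of $\nsp(A)$ and $\nsp(B)$ follows from the chain of isomorphisms $N_A \cong X/\nsp(A) \cong X/\nsp(B) \cong N_B$ together with a dimension count using an orthonormal basis of $X$---but in a general Banach space these are genuine structural assumptions. The statement therefore is to be understood with such splittings available, which is the setting of the range-invariance applications in the body of this paper, so that the lemma applies as claimed.
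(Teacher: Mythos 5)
Your reverse implication and the initial passage to the quotient maps are fine, but the forward direction has two genuine gaps. First, the appeal to the open mapping theorem to invert $\tilde{A}$ and $\tilde{B}$ separately fails: the common range $\rng(A)=\rng(B)$ is in general \emph{not} closed in $Y$ (this is precisely the situation of interest in this paper), so with the subspace topology from $Y$ it is not a Banach space, and the inverses $\tilde{A}^{-1}$, $\tilde{B}^{-1}$ are exactly the unbounded objects that make the problem ill-posed; there is no ``natural Banach-space topology'' to invoke without further work. What is bounded is only the \emph{composition} $\tilde{B}^{-1}\tilde{A}$, and this must be proved directly. The paper does so by applying the Closed Graph Theorem to the operator $R:X\to X/_{\nsp(A)}$ defined implicitly by the relation $Az=Bx$, whose graph is closed by continuity of $A$ and $B$ alone. (Your route can be rescued by equipping the range with the norm $\|y\|:=\|\tilde{A}^{-1}y\|+\|\tilde{B}^{-1}y\|$ and checking completeness, but that is the closed-graph argument in disguise.)

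Second, the lifting step is not just a ``structural assumption'' that holds in Hilbert space: the chain $N_A\cong X/_{\nsp(A)}\cong X/_{\nsp(B)}\cong N_B$ gives an isomorphism of the \emph{complements}, not of the nullspaces, so the isomorphism $R_1:\nsp(A)\to\nsp(B)$ you need may fail to exist even in separable Hilbert space. Concretely, take $X=Y=\ell^2$, $B=\textup{id}$ and $A$ the backward shift; then $\rng(A)=\rng(B)=\ell^2$, yet any $R$ with $A=BR=R$ would be non-injective, so no boundedly invertible $R\in L(X,X)$ exists. This shows your construction cannot close the argument and that the forward implication, read literally with $R\in L(X,X)$, requires the quotient-space interpretation: the paper's own proof only ever produces $R$ as a bounded map $X\to X/_{\nsp(A)}$ (and its counterpart with the roles of $A$ and $B$ exchanged) and never performs the lifting you attempt, which is the form in which the condition is actually used in \eqref{rangeinvar}.
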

\begin{proof}
The implication from right to left is trivial. To show the other direction, we assume that $\rng(A)=\rng(B)$ and first of all define $R:X\to X/_{\nsp(A)}$  by assigning to any $x\in X$ the unique element $z:=Rx$ from the quotient space $X/_{\nsp(A)}$ (which due to the fact that $\nsp(A)$ is closed for $A\in L(X,Y)$, is a Banach space as well) such that $Az=Bx$. 
To prove that the resulting linear operator $R$ is bounded, we invoke the Closed Graph Theorem. The graph of $R$ is given by 
\[
\text{Gr}(R)=\{(x,Rx)\in X\times X/_{\nsp(A)}\, : \, x\in X\} =\{(x,z)\in X\times X/_{\nsp(A)}\, : \, Az = Bx\}.
\]
For an arbitrary sequence $(x_n,z_n)_{n\in\mathbb{N}}\subseteq \text{Gr}(R)$ with $x_n\to x$, $z_n\to z$ in $X$ we have, by continuity of $A$, $B$, that 
\[
\|Az-Bx\| \leq \|Az-Az_n\|+\|Bx_n-Bx\|\to0 \text{ as }n\to\infty
\]
and thus $(x,z)\in \text{Gr}(R)$. This implies that $R$ is bounded.
Boundedness of its inverse follows by exchanging the roles of $A$ and $B$.
\end{proof}

\begin{lemma}\label{Lem:equivalence_ranginvar_aao-red}
Consider \eqref{aao} and \eqref{red} under the assumption that $\tfrac{\partial A}{\partial u}(q,u)$ is an isomorphism for $(q,u)\in U$. 
Then \eqref{rangeinvar} hold for \eqref{aao} and all $C\in L$ if and only if it holds \eqref{rangeinvar} holds for \eqref{red}.
\end{lemma}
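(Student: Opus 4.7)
The plan is to invoke Lemma~\ref{lem:ABR} to recast \eqref{rangeinvar} as the range equalities $\rng(\mathbf{F}'(q))=\rng(\mathbf{F}'(q_0))$ and $\rng(\mathbb{F}'(q,S(q)))=\rng(\mathbb{F}'(q_0,S(q_0)))$, and then exploit the isomorphism $L_q:=\tfrac{\partial A}{\partial u}(q,S(q))$ to express the latter in terms of the former.

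The key reduction step I would carry out is a change of variables on the all-at-once linearization. Writing
\begin{equation*}
\mathbb{F}'(q,S(q))(\uldq,\uldu)=\Bigl(\tfrac{\partial A}{\partial q}(q,S(q))\uldq+L_q\uldu,\ C\uldu\Bigr),\qquad S'(q)=-L_q^{-1}\tfrac{\partial A}{\partial q}(q,S(q)),
\end{equation*}
and substituting $\tilde u:=\uldu-S'(q)\uldq=\uldu+L_q^{-1}\tfrac{\partial A}{\partial q}(q,S(q))\uldq$, which is a bounded linear automorphism of $Q\times V$, transforms $\mathbb{F}'(q,S(q))$ into the block form $(\uldq,\tilde u)\mapsto(L_q\tilde u,\ \mathbf{F}'(q)\uldq+C\tilde u)$. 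Since $L_q:V\to W^*$ is an isomorphism, $\tilde u$ is determined by the first component, giving the characterisation
\begin{equation*}
\rng(\mathbb{F}'(q,S(q)))=\bigl\{(w,y)\in W^*\times Y\ :\ y-CL_q^{-1}w\in\rng(\mathbf{F}'(q))\bigr\}.
\end{equation*}

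For the direction (aao)$\Rightarrow$(red) I would set $w=0$ in this characterisation: the horizontal slice $(\{0\}\times Y)\cap\rng(\mathbb{F}'(q,S(q)))$ equals $\{0\}\times\rng(\mathbf{F}'(q))$, so an equality of all-at-once ranges at $q$ and $q_0$ immediately forces $\rng(\mathbf{F}'(q))=\rng(\mathbf{F}'(q_0))$, and this persists for every choice of $C$. For the converse (red)$\Rightarrow$(aao), given $\rng(\mathbf{F}'(q))=\rng(\mathbf{F}'(q_0))$ for every $C\in L(V,Y)$, and hence (taking $C$ the embedding $V\hookrightarrow V$) $\rng(S'(q))=\rng(S'(q_0))$, I would take $(w,y)\in\rng(\mathbb{F}'(q,S(q)))$ and rewrite
\begin{equation*}
y-CL_0^{-1}w=\bigl(y-CL_q^{-1}w\bigr)+C(L_q^{-1}-L_0^{-1})w,
\end{equation*}
using $L_q^{-1}-L_0^{-1}=L_q^{-1}(L_0-L_q)L_0^{-1}$ to express the correction as an element of $C\,\rng(S'(q_0))\subseteq\rng(\mathbf{F}'(q_0))$, which then places $(w,y)$ in $\rng(\mathbb{F}'(q_0,S(q_0)))$. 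Symmetry finishes the equality.

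The main obstacle I anticipate is precisely this last absorption step: one has to verify that $(L_q^{-1}-L_0^{-1})W^*\subseteq\rng(S'(q_0))$, i.e.\ that $L_0^{-1}(L_0-L_q)V$ lies in $\rng(S'(q_0))$. This is where the isomorphism assumption on $\tfrac{\partial A}{\partial u}$ is decisive, and it is the reason the equivalence is described as ``formal'' in Remark~\ref{rem:equivalence_ranginvar_aao-red}; once this compatibility is available (as is typical in the examples discussed), translating \emph{any} $\mathbf{R}(q)$ witnessing reduced range invariance into an upper-triangular $\mathbb{R}(q,u)=\bigl(\begin{smallmatrix}\mathbf{R}(q) & R_{qu}\\ 0 & I\end{smallmatrix}\bigr)$ (with $R_{qu}$ solving $\tfrac{\partial A}{\partial q}(q_0,S(q_0))R_{qu}=L_q-L_0$) yields the aao range invariance via Lemma~\ref{lem:ABR}, completing the proof.
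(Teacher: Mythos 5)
Your argument reaches the same conclusion as the paper but by a genuinely different route. The paper works directly with the factorization form \eqref{rangeinvar}: it writes out $\mathbb{F}'(q,u)=\mathbb{F}'(q_0,u_0)\mathbb{R}(q,u)$ in block form, uses the arbitrariness of $C$ to force $\mathbb{R}_{uu}=\textup{id}$, $\mathbb{R}_{uq}=0$, splits into the cases $\uldu=0$ and $\uldq=0$ to obtain the two block identities
$\tfrac{\partial A}{\partial q}(q,u)=\tfrac{\partial A}{\partial q}(q_0,u_0)\mathbb{R}_{qq}$ and
$\tfrac{\partial A}{\partial u}(q,u)=\tfrac{\partial A}{\partial q}(q_0,u_0)\mathbb{R}_{qu}+\tfrac{\partial A}{\partial u}(q_0,u_0)$,
and then eliminates the state via a Neumann series to produce an explicit $\mathbf{R}(q)$ with $S'(q)=S'(q_0)\mathbf{R}(q)$. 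You instead pass to range equalities via Lemma~\ref{lem:ABR}, triangularize $\mathbb{F}'(q,S(q))$ by the substitution $\tilde u=\uldu-S'(q)\uldq$, and read off the range as a graph-type set. Your (aao)$\Rightarrow$(red) direction — intersecting with the slice $\{0\}\times Y$ — is complete and arguably cleaner than the paper's, since it avoids the Neumann series and yields the conclusion for each fixed $C$ directly. What each approach buys: the paper's computation produces the concrete operator $\mathbf{R}(q)$ (useful later in the examples), while yours isolates exactly where the observation operator enters.

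The one point to correct is your attribution in the (red)$\Rightarrow$(aao) direction. The absorption step requires $(L_q^{-1}-L_0^{-1})W^*\subseteq\rng(S'(q_0))$, equivalently the solvability of $\tfrac{\partial A}{\partial q}(q_0,u_0)R_{qu}=L_q-L_0$. This does \emph{not} follow from the assumption that $\tfrac{\partial A}{\partial u}$ is an isomorphism — that assumption only guarantees that $L_q^{-1}$ and $L_0^{-1}$ exist; the inclusion is an additional structural compatibility between the $u$-dependence and the $q$-dependence of $A$. You should be aware, however, that the paper's own proof has precisely the same unproven step hidden in the backward reading of its middle equivalence (one must reconstruct $\mathbb{R}_{qu}$ from $\mathbf{R}(q)$, which is the identical solvability condition), and this is exactly why Remark~\ref{rem:equivalence_ranginvar_aao-red} qualifies the equivalence as ``formal.'' So your proof is on the same footing as the paper's; you have simply made the weak point explicit where the paper leaves it implicit, but you should not claim the isomorphism hypothesis closes it.
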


\begin{proof}
This can be seen from the chain of equivalences
\[
\begin{aligned}
&\forall C\in L(V,Y)\ : \quad 
\mathbb{F}'(q,u) = \mathbb{F}'(q_0,u_0)\mathbb{R}(q,u)\\
&\Leftrightarrow \ (\uldq,\uldu) \in Q\times V\ : \\
&\hspace*{1.5cm}\begin{cases}
\tfrac{\partial A}{\partial q}(q,u)\uldq + \tfrac{\partial A}{\partial u}(q,u)\uldu 
\\
= \tfrac{\partial A}{\partial q}(q_0,u_0)(\mathbb{R}_{qq}(q,u)\uldq + \mathbb{R}_{qu}(q,u)\uldu) + \tfrac{\partial A}{\partial u}(q_0,u_0)
(\mathbb{R}_{uq}(q,u)\uldq + \mathbb{R}_{uu}(q,u)\uldu)
\\
C\uldu = C (\mathbb{R}_{uq}(q,u)\uldq + \mathbb{R}_{uu}(q,u)\uldu)
\end{cases}\\
&\Leftrightarrow  \quad
\begin{cases}
\tfrac{\partial A}{\partial q}(q,u) 
= \tfrac{\partial A}{\partial q}(q_0,u_0)\mathbb{R}_{qq}(q,u) \\
\tfrac{\partial A}{\partial u}(q,u)
= \tfrac{\partial A}{\partial q}(q_0,u_0)\mathbb{R}_{qu}(q,u) + \tfrac{\partial A}{\partial u}(q_0,u_0)
\end{cases}\\
&\Leftrightarrow  \quad
-S'(q)=\tfrac{\partial A}{\partial u}(q,u)^{-1}\tfrac{\partial A}{\partial q}(q,u) 
= \Bigl(\tfrac{\partial A}{\partial q}(q_0,u_0)\mathbb{R}_{qu}(q,u) + \tfrac{\partial A}{\partial u}(q_0,u_0)\Bigr)^{-1}
\tfrac{\partial A}{\partial q}(q_0,u_0)\mathbb{R}_{qq}(q,u) \\ 
&\hspace*{2cm} =
-\tfrac{\partial A}{\partial u}(q_0,u_0)^{-1}\tfrac{\partial A}{\partial q}(q_0,u_0)
\underbrace{\sum_{j=0}^\infty \bigl(\mathbb{R}_{qu}(q,u)
\tfrac{\partial A}{\partial u}(q_0,u_0)^{-1}\tfrac{\partial A}{\partial q}(q_0,u_0)\bigr)^j\mathbb{R}_{qq}(q,u)}_{=:\mathbf{R}(q)} \\[-4ex] 
&\hspace*{2cm}
=-S'(q^0)\mathbf{R}(q)\\
&\Leftrightarrow  \
\forall C\in L(V,Y)\ : \quad 
\mathbf{F}'(q) = \mathbf{F}'(q_0)\mathbf{R}(q)
\end{aligned}
\]  
for arbitrary $(q,u)=(q,S(q))\in U$, $(q_0,u_0)=(q_0,S(q_0))$. Here we have used the fact that allowing for arbitrary observations enforces the second component of the operator $\mathbb{R}$ to be the identity on $u$ and considered the two cases $\uldu=0$ and $\uldq=0$ separately.
\end{proof}

\medskip

\begin{proof}(Lemma~\ref{spectralbounds}) 
If $\nsp(K)^\bot \subseteq \nsp(P)$, then we can estimate
\[
\begin{aligned}
&\|(K^\star K+P^\star P+\alpha I)^{-1}K^\star Kv\|
%\\&
=\|(K^\star K+P^\star P+\alpha I)^{-1}K^\star K\textup{Proj}_{\nsp(K)^\bot}v\|\\
&=\|(K^\star K+P^\star P+\alpha I)^{-1}(K^\star K+P^\star P)\textup{Proj}_{\nsp(K)^\bot}v\|
\leq \|\textup{Proj}_{\nsp(K)^\bot}v\|\leq \|v\|.
\end{aligned}
\]
Alternatively, if the symmetric operators $(K^\star K)^{1/2}$ and $P^\star P$ commute, then 
\[
\|(K^\star K+P^\star P+\alpha I)^{-1}K^\star Kv\|=
\|(K^\star K)^{1/2}(K^\star K+P^\star P+\alpha I)^{-1}(K^\star K)^{1/2}v\|
\]
and for 
$z:=(K^\star K+P^\star P+\alpha I)^{-1}(K^\star K)v$
we have 
on one hand 
$(K^\star K+P^\star P+\alpha I)z =(K^\star K)v$
which is the first order necessary (and by convexity also sufficient) condition for $z$ minimizing
$\|(K^\star K)^{1/2}(\tilde{z}-v)\|^2+\|(P^\star P)^{1/2}\tilde{z}\|^2+\alpha\|\tilde{z}\|^2$ with respect to $\tilde{z}$. Comparing the value of this objective function at $\tilde{z}=z$ with the one at $\tilde{z}=0$, we obtain
\[
\|(K^\star K)^{1/2}(z-v)\|^2+\|(P^\star P)^{1/2}z\|^2+\alpha\|z\|^2\leq \|(K^\star K)^{1/2}v\|^2
\]
and hence $\|(K^\star K)^{1/2}z\|\leq\|(K^\star K)^{1/2}v\|$, that is, by definition of $z$, 
\[
\|(K^\star K)^{1/2}(K^\star K+P^\star P+\alpha I)^{-1}(K^\star K)^{1/2} (K^\star K)^{1/2}v\|
\leq 2\|(K^\star K)^{1/2}v\|
\]
for any $v\in X$. Extending this estimate by density from the range of $(K^\star K)^{1/2}$ to $\nsp((K^\star K)^{1/2})^\bot=\nsp(K)^\bot$, we arrive at 
\[
\begin{aligned}
&\|(K^\star K)^{1/2}(K^\star K+P^\star P+\alpha I)^{-1}(K^\star K)^{1/2} w\|\\
&=\|(K^\star K)^{1/2}(K^\star K+P^\star P+\alpha I)^{-1}(K^\star K)^{1/2} \textup{Proj}_{\nsp((K^\star K)^{1/2})^\bot}w\|\\
&\leq 2\|\textup{Proj}_{\nsp((K^\star K)^{1/2})^\bot}w\|
\leq 2\|w\| \quad \forall w\in X,
\end{aligned}
\]
which yields
$\|(K^\star K)^{1/2}(K^\star K+P^\star P+\alpha I)^{-1}(K^\star K)^{1/2}\|\leq 2.$

In both cases of \eqref{NKNP} we get
\[
\begin{aligned}
&\|K(K^\star K+P^\star P+\alpha I)^{-1}v\|^2
%\\&
=\langle(K^\star K+P^\star P+\alpha I)^{-1}v,K^\star K(K^\star K+P^\star P+\alpha I)^{-1}v
\rangle
\leq \frac{C}{\alpha}\|v\|^2.
\end{aligned}
\]
\end{proof}

\section{Two further Newton type methods} \label{Appendix:Newton}

%\subsubsection{A Newton type and another frozen Newton method}\label{subsec:Newton}

We return to the variational approach \eqref{var}, \eqref{var_eta}, and define an alternative regularized Newton type method for \eqref{FP} by linearizing under the norm in \eqref{var}.

Assuming that $r$ is G\^{a}teaux differentiable, $U$ is weakly closed (e.g., $U$ is closed and convex) and $\mathcal{P}$ is weakly lower semicontinuous, the iterates $(\hat{r}_{n+1}^\delta,x_{n+1}^\delta)\in\widetilde{X}\times U$ are well-defined by
\begin{equation}\label{Newton}
\begin{aligned}
&(\hat{r}_{n+1}^\delta,x_{n+1}^\delta)\in \mbox{argmin}_{(\hat{r},x)\in\widetilde{X}\times U} J_{n}^\delta(\hat{r},x)\\
&\mbox{where } J_{n}^\delta(\hat{r},x)
:=\|K\hat{r}+F(x_0)-y^\delta\|_Y^p+\alpha_n
%\|\hat{r}\|_{\check{X}}^a
\mathcal{R}(\hat{r})\\
&\hspace*{4cm}+\beta_n\|r(x_n^\delta)+r'(x_n^\delta)(x-x_n^\delta)-\hat{r}\|_{\widetilde{X}}^b + \mathcal{P}(x)
\end{aligned}
\end{equation}
with some $p,b\in[1,\infty)$, $\alpha_n,\beta_n>0$. 
As above we can avoid imposing conditions for existence of a minimizer and computing it exactly by adding a tolerance $\eta_n>0$ that tends to zero as $n\to\infty$.

Comparison with $(r(x^\dagger),x^\dagger)\in \widetilde{X}\times U$ and using \eqref{delta}, \eqref{rangeinvar_diff} and $\mathcal{P}(x^\dagger)=0$ then yields 
\begin{equation}\label{est_min_Newton}
\begin{aligned}
&\|K(\hat{r}_{n+1}^\delta-r(x^\dagger))+y-y^\delta\|_Y^p
+\alpha_n\Bigl(\mathcal{R}(\hat{r}_{n+1}^\delta)-\mathcal{R}(r(x^\dagger))\Bigr)\\
&+\beta_n\Bigl(\|r'(x_n^\delta)(x_{n+1}^\delta-x^\dagger)+\textup{res}_{\textup{Taylor}}+r(x^\dagger)-\hat{r}_{n+1}^\delta\|_{\widetilde{X}}^b - \|\textup{res}_{\textup{Taylor}}\|_{\widetilde{X}}^b\Bigr) 
+ \mathcal{P}(x_{n+1}^\delta)\\
&\leq\delta^p+\eta_n,
\end{aligned}
\end{equation}
where we have used
\[
\begin{aligned}
\textup{res}_{\textup{Taylor}}:=&r(x_n^\delta)-r(x^\dagger)+r'(x_n^\delta)(x^\dagger-x_n^\delta),\\ 
\textup{res}_{\textup{Newton}}:=&r(x_n^\delta)-r(x^\dagger)+r'(x_n^\delta)(x_{n+1}^\delta-x_n^\delta)
%\\=&
=r'(x_n^\delta)(x_{n+1}^\delta-x^\dagger)+\textup{res}_{\textup{Taylor}}.
\end{aligned}
\]
%
%{Case of compact embedding $\check{X}\hookrightarrow\widetilde{X}$?} 
%With $\check{X}=\widetilde{X}$, 
However, the connection between $\hat{r}_{n+1}^\delta$ and $r(x^\dagger)$ by \eqref{est_min_Newton} 
%cannot be expected to be strong enough 
is likely too weak 
to imply smallness of the Taylor remainder $\textup{res}_{\textup{Taylor}}$. 
Still, exact penalization, that is, the choice $b=1$, $\beta_n$ sufficiently large, enforces the corresponding constraint 
$r(x_n^\delta)+r'(x_n^\delta)(x_{n+1}^\delta-x_n^\delta)-\hat{r}_{n+1}^\delta=0$
and thus, inserting the resulting expression for $\hat{r}_{n+1}^\delta$ into the data misfit term and the regularization term, the definition of the Newton iterate reduces to 
\begin{equation}\label{Newton_expen}
\begin{aligned}
&x_{n+1}^\delta\in \mbox{argmin}_{x\in\widetilde{X}\times U} J_{n,1}^\delta(x)\\
&\mbox{where } J_{n,1}^\delta(x)
:=\|K(r(x_n^\delta)+r'(x_n^\delta)(x-x_n^\delta))+F(x_0)-y^\delta\|_Y^p\\
&\hspace*{4cm}+\alpha_n\mathcal{R}(r(x_n^\delta)+r'(x_n^\delta)(x-x_n^\delta))
+ \mathcal{P}(x),
\end{aligned}
\end{equation}
where the data misfit term can be rewritten as
$K(r(x_n^\delta)+r'(x_n^\delta)(x-x_n^\delta))+F(x_0)-y^\delta
=Kr'(x_n^\delta)(x-x_n^\delta) +F(x_n^\delta)-y^\delta
=F(x_n^\delta)+F'(x_n^\delta)(x-x_n^\delta)-y^\delta$.
Thus, as compared to \eqref{frozenNewton},  we work with an enhanced misfit term and have more freedom to choose the space $\tilde{X}$.
As compared to the Levenberg-Marquardt Method (or the iteratively regularized Gauss-Newton method), see, e.g. \cite{Hanke97,Hanke2010,KNSbook:2008,Ried05}, we work with a more structure adapted regularization term.

Comparing the value of $J_{n,1}^\delta$ at the approximate minimizer $x_{n+1}^\delta$ with the one at $x^\dagger$ we obtain
\[
\begin{aligned}
&\|K(r'(x_n^\delta)(x_{n+1}^\delta-x^\dagger)+\textup{res}_{\textup{Taylor}})+y-y^\delta\|_Y^p-\|K\textup{res}_{\textup{Taylor}}+y-y^\delta\|_Y^p\\
&+\alpha_n\Bigl(\mathcal{R}(r(x^\dagger)+r'(x_n^\delta)(x_{n+1}^\delta-x^\dagger)+\textup{res}_{\textup{Taylor}})-\mathcal{R}(r(x^\dagger)+\textup{res}_{\textup{Taylor}})\Bigr)
+ \mathcal{P}(x_{n+1}^\delta)
\leq\delta^p.
\end{aligned}
\]
\Margin{R1 6.}
Deriving a contractive recursion on $e_n=x_n^\delta-x^\dagger$ from this would require Taylor remainder estimates of the form
\[
\|\textup{res}_{\textup{Taylor}}\|_X\leq c \|r'(x_{n-1}^\delta)(x_n^\delta-x^\dagger)\|_{\widetilde{X}}, \quad 
\| K\textup{res}_{\textup{Taylor}}\|_Y\leq c \| Kr'(x_{n-1}^\delta) (x_n^\delta-x^\dagger)\|_Y  
\]
for some $c\in(0,1)$, where the latter is an estimate in image space and thus requires an assumption similar to the tangential cone condition \eqref{tangcone}.
Indeed, to the best of the author's knowledge, convergence results of Newton type methods in a general Banach space setting have so far only been achieved under conditions of the type \eqref{tangcone}, but not yet under a range invariance condition on $F'$. We will thus, like in Section~\ref{subsec:frozenNewton}, also here restrict ourselves to a Hilbert space setting with $p=2$, $\mathcal{R}(\hat{r})=\|\hat{r}\|_{\widetilde{X}}^2$, $\mathcal{P}(x)=\|Px\|_Z^2$, in order to be able to apply spectral theory for bounded selfadjoint operators.
The solution to \eqref{Newton_expen} can then be explicitely expressed via the first order optimality condition 
\begin{equation}\label{NewtonHilbert}
\begin{aligned}
x_{n+1}^\delta=
x_n^\delta+&((KR_n)^\star KR_n+P^\star P+\alpha_n R_n^\star R_n)^{-1}\\
&\Bigl((KR_n)^\star (y^\delta-Kr(x_n^\delta)-F(x_0))
-P^\star Px_n^\delta-\alpha_n R_n^\star r(x_n^\delta)\Bigr)
\end{aligned}
\end{equation}
with $R_n:=r'(x_n^\delta)$.
Due to the fact that $Px^\dagger=0$, the error is thus given by  
\[
\begin{aligned}
&x_{n+1}^\delta-x^\dagger\\
&=((KR_n)^\star KR_n+P^\star P+\alpha_n R_n^\star R_n)^{-1}
%\\&
\Bigl((KR_n)^\star (y^\delta-y)-((KR_n)^\star K+\alpha_nR_n^\star ) \textup{res}_{\textup{Taylor}} 
-\alpha_n R_n^\star r(x^\dagger)\Bigr)\\
&=(R_n^\star (K^\star K+P_n^\star P_n+\alpha_n \textup{id})R_n)^{-1}
%\\&
\Bigl(R_n^\star K^\star (y^\delta-y)-R_n^\star (K^\star K+\alpha_n \textup{id}) \textup{res}_{\textup{Taylor}} 
-\alpha_n R_n^\star r(x^\dagger)\Bigr)\\
&=R_n^{-1}(K^\star K+P_n^\star P_n+\alpha_n \textup{id})^{-1}
%\\&
\Bigl(K^\star (y^\delta-y)-(K^\star K+\alpha_n \textup{id}) \textup{res}_{\textup{Taylor}} 
-\alpha_n r(x^\dagger)\Bigr)
\end{aligned}
\]
with $R_\dagger=r'(x^\dagger)$, $P_n:=PR_n^{-1}$. 
%and $R^{-\star}$ abbreviating $(R^\star)^{-1}=(R^{-1})^\star$.
From Lemma~\ref{spectralbounds}, assuming 
%\begin{equation}\label{rprime}
%\exists c\in(0,1)\, \forall x\in U\, : \ r'(x)^{-1}\in L(\widetilde{X},X)\mbox{ and }\|R_\dagger-r'(x)\|\leq c\|x^\dagger-x\|_X
%\end{equation}
%for which it suffices to have
\begin{equation}\label{rprime_suff}
\begin{aligned}
&r'(x_0)^{-1}\in L(\widetilde{X},\widehat{X})\textup{ and }\\
&\exists L_r>0\, \forall x\in U\, : \ 
\|r'(x^\dagger)-r'(x)\|_{L(\widehat{X},\widetilde{X})}\leq L_r\|x^\dagger-x\|_{\widehat{X}} <1,
\end{aligned}
\end{equation}
with another auxiliary space $\widehat{X}$, 
we obtain
\begin{equation}\label{est_err_Newton}
\|x_{n+1}^\delta-x^\dagger\|_{\widehat{X}}\leq 
\|R_n^{-1}\|\Bigl(\frac{\delta}{\sqrt{\alpha_n}} 
+ \|\textup{res}_{\textup{Taylor}}\|_{\widetilde{X}}
+ \revision{a_n(P_n)}\Bigr)
\end{equation}
\Margin{R2 11.}
with 
\[
\revision{a_n(P_n)}=\alpha_n\|(K^\star K+P_n^\star P_n+\alpha_n \textup{id})^{-1}r(x^\dagger)\|_{\widetilde{X}}, 
\]
provided (cf. Lemma~\ref{spectralbounds})
\begin{equation}\label{NKNPn}
\textup{(a) }\nsp(K)^\bot \subseteq \nsp(P_n)
\textup{ or \ (b) }P_n^\star P_n(K^\star K)^{1/2} = (K^\star K)^{1/2}P_n^\star P_n.
\end{equation}
From this we can conclude the following convergence result.

\begin{theorem}\label{thm:convNewton}
Let $x_0\in U:=\mathcal{B}_\rho^{\widehat{X}}(x^\dagger)$ for some $\rho>0$ sufficiently small.
Assume that $F$ satisfies \eqref{rangeinvar_diff} with $r$ G\^{a}teaux differentiable and satisfying \eqref{rprime_suff}, 
$r(x^\dagger)\in \bigl(\mathcal{N}(K)\cap \mathcal{N}(PR(x^\dagger)^{-1}\bigr)^\bot$, $K\in L(\widetilde{X},Y)$ and
\begin{equation}\label{NKNPx}
\forall x\in U\, : \ \textup{(a) }\mathcal{N}(K)^\bot = \mathcal{N}(P_x)
\textup{ or \ (b) }(P_x^\star P_x)^{1/2}(K^\star K)^{1/2} = (K^\star K)^{1/2}(P_x^\star P_x)^{1/2}
\end{equation}
with  $P_x:=Pr'(x)^{-1}$.
Let the stopping index $n_*=n_*(\delta)$ be chosen according to \eqref{nstar} with $c$ replaced by $\bar{c}:=2L_r\rho<1$, $L_r$ as in \eqref{rprime_suff}.

Then the iterates $(x_n^\delta)_{n\in\{1,\ldots,n_*(\delta)\}}$ are well-defined by \eqref{Newton}, remain in $\mathcal{B}_\rho^{\widehat{X}}(x^\dagger)$ and converge in $\widehat{X}$, $\|x_{n_*(\delta)}^\delta-x^\dagger\|_{\widehat{X}}\to0$ as $\delta\to0$. In the noise free case $\delta=0$, $n_*(\delta)=\infty$ we have $\|x_n-x^\dagger\|_{\widehat{X}}\to0$ as $n\to\infty$.
\end{theorem}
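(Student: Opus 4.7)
The analysis builds on the error identity \eqref{est_err_Newton}, which is already derived just before the theorem. The plan is to combine the following ingredients, relying on an induction on $n$ that simultaneously guarantees $x_n^\delta\in\mathcal{B}_\rho^{\widehat{X}}(x^\dagger)$: (i) a uniform bound on $\|R_n^{-1}\|$ and a contractive Taylor estimate, (ii) the spectral bound $a_n(P_n)\to 0$, and (iii) iteration of the resulting linear recursion together with application of the stopping rule \eqref{nstar}.

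For (i), boundedness of $r'(x_0)^{-1}$ from \eqref{rprime_suff}, combined with the Lipschitz bound $\|r'(x_n^\delta)-r'(x_0)\|\le 2L_r\rho$ (triangulated through $r'(x^\dagger)$) and Banach's perturbation lemma, yields a uniform estimate $\|R_n^{-1}\|\le C_R$ for $\rho$ small. Writing the Taylor remainder as
$$
\textup{res}_{\textup{Taylor}}=-\int_0^1\bigl[r'(x_n^\delta+t(x^\dagger-x_n^\delta))-r'(x_n^\delta)\bigr](x^\dagger-x_n^\delta)\,dt
$$
and bounding the integrand by $2L_r\|x_n^\delta-x^\dagger\|_{\widehat{X}}$ via \eqref{rprime_suff} (applied through $r'(x^\dagger)$) gives
$$
\|\textup{res}_{\textup{Taylor}}\|_{\widetilde{X}}\le 2L_r\|x_n^\delta-x^\dagger\|_{\widehat{X}}^2\le \bar c\,\|x_n^\delta-x^\dagger\|_{\widehat{X}}.
$$
For (ii), condition \eqref{NKNPx} guarantees that case (a) or (b) of Lemma~\ref{spectralbounds} holds uniformly for each $P_n=Pr'(x_n^\delta)^{-1}$, so \eqref{KPalpha} transfers to the $n$-dependent operator. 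The orthogonality hypothesis on $r(x^\dagger)$ together with $r'(x_n^\delta)\to r'(x^\dagger)$ (a consequence of the iterates staying in $\mathcal{B}_\rho^{\widehat{X}}(x^\dagger)$) then yields $a_n(P_n)\to 0$ by the spectral decomposition argument underlying \eqref{anto0}.

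Inserting (i)--(ii) into \eqref{est_err_Newton} produces the linear recursion
$$
\|x_{n+1}^\delta-x^\dagger\|_{\widehat{X}}\le c\,\|x_n^\delta-x^\dagger\|_{\widehat{X}}+C_R\frac{\delta}{\sqrt{\alpha_n}}+C_R\,a_n(P_n),
$$
with contraction factor $c:=C_R\bar c<1$ for $\rho$ sufficiently small (noting $C_R\to 1$ as $\rho\to 0$, so \eqref{nstar} stated with $\bar c$ also controls the terms with $c$). Induction on $n\le n_*(\delta)$ confirms that the iterates remain in $\mathcal{B}_\rho^{\widehat{X}}(x^\dagger)$, and unfolding the recursion yields
$$
\|x_{n_*(\delta)}^\delta-x^\dagger\|_{\widehat{X}}\le c^{n_*(\delta)}\|x_0-x^\dagger\|_{\widehat{X}}+C_R\delta\sum_{j=0}^{n_*(\delta)-1}c^j\alpha_{n_*(\delta)-1-j}^{-1/2}+C_R\sum_{j=0}^{n_*(\delta)-1}c^j a_{n_*(\delta)-1-j}(P_{n_*(\delta)-1-j}).
$$
The three terms vanish as $\delta\to 0$: the first since $c<1$ and $n_*(\delta)\to\infty$, the second by the choice \eqref{nstar}, and the third by a Cauchy-product argument using $a_k(P_k)\to 0$ together with $\sum_j c^j<\infty$. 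The noise-free case $\delta=0$, $n_*(\delta)=\infty$ follows by letting $n\to\infty$ directly in the recursion.

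The main obstacle lies in (ii): since $P_n=Pr'(x_n^\delta)^{-1}$ depends on the current iterate, the fixed-operator spectral argument used in \eqref{anto0} has to be upgraded. The cleanest route is to compare $(K^\star K+P_n^\star P_n+\alpha_n\textup{id})^{-1}$ with its limit counterpart $(K^\star K+P_{x^\dagger}^\star P_{x^\dagger}+\alpha_n\textup{id})^{-1}$, where $P_{x^\dagger}=Pr'(x^\dagger)^{-1}$, controlling the perturbation by the Lipschitz bound \eqref{rprime_suff} on $r'$ and the uniform spectral bounds from Lemma~\ref{spectralbounds}, and finally exploiting \eqref{NKNPx} together with the orthogonality hypothesis $r(x^\dagger)\in(\nsp(K)\cap\nsp(P_{x^\dagger}))^\perp$ to conclude decay.
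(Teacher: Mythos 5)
Your proposal is correct and follows essentially the same route as the paper's proof: start from the error identity \eqref{est_err_Newton}, obtain the contraction from the quadratic Taylor remainder bound implied by \eqref{rprime_suff}, and handle the iterate-dependence of $P_n$ by comparing the resolvent with the one for $P_\dagger=Pr'(x^\dagger)^{-1}$ via the Lipschitz bound on $r'$ and Lemma~\ref{spectralbounds} (with the roles of $K$ and $P$ reversed). The only presentational nuance is that the paper absorbs the $\|x_n^\delta-x^\dagger\|$-proportional part of $a_n(P_n)-a_n(P_\dagger)$ into the contraction coefficient, leaving the fixed-operator quantity $a_n(P_\dagger)\to0$ in the recursion, which avoids any appearance of circularity in arguing that $a_n(P_n)\to0$.
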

\begin{proof}
We estimate $\revision{a_n(P_n)}$ by 
\[
\begin{aligned}
\revision{a_n(P_n)}\leq&\|\alpha_n(K^\star K+P_n^\star P_n+\alpha_n \textup{id})^{-1}(P_\dagger^\star P_\dagger-P_n^\star P_n)
%\\&\qquad
(K^\star K+P_\dagger^\star P_\dagger+\alpha_n \textup{id})^{-1}r(x^\dagger)\|_{\widetilde{X}}
+\revision{a_n(P_\dagger)}
\end{aligned}
\]
with the shorthand notation $P_n=Pr'(x_n^\delta)^{-1}$, $P_\dagger=Pr'(x^\dagger)^{-1}$ and 
\[
\begin{aligned}
&\revision{a_n(P_\dagger)}=\|\alpha_n(K^\star K+P_\dagger^\star P_\dagger+\alpha_n \textup{id})^{-1}r(x^\dagger)\|_{\widetilde{X}} \to0\mbox{ as }n\to\infty\\ 
&P_\dagger^\star P_\dagger-P_n^\star P_n=P_n^\star P_n(r'(x_n)r'(x^\dagger)^{-1}-\textup{id})+(\textup{id}-r'(x_n^\delta)^{-1\,\star}r'(x^\dagger)^\star )P_\dagger^\star P_\dagger.
\end{aligned}
\]
Due to Lemma~\ref{spectralbounds} (with reversed roles of $K$ and $P$) we have 
\[
\begin{aligned}
&\|(K^\star K+P_n^\star P_n+\alpha_n \textup{id})^{-1}P_n^\star P_n\|\leq C, \quad&&
\alpha_n\|(K^\star K+P_n^\star P_n+\alpha_n \textup{id})^{-1}\|\leq 1, \\
&\|P_\dagger^\star P_\dagger(K^\star K+P_\dagger^\star P_\dagger+\alpha_n \textup{id})^{-1}\|\leq C, \quad&&
\alpha_n\|(K^\star K+P_\dagger^\star P_\dagger+\alpha_n \textup{id})^{-1}\|\leq 1. 
\end{aligned}
\]
Moreover, 
\[
\begin{aligned}
&\|r'(x_n^\delta)r'(x^\dagger)^{-1}-\textup{id}\|=\|(r'(x_n^\delta)-r'(x^\dagger))r'(x^\dagger)^{-1}\|
\leq c\|r'(x^\dagger)^{-1}\|\,\|x_n^\delta-x^\dagger\|_X\\
&\|\textup{id}-r'(x_n^\delta)^{-1\,\star}r'(x^\dagger)^\star \|=\|(r'(x_n^\delta)-r'(x^\dagger))r'(x_n^\delta)^{-1}\|
\leq c\|r'(x_n^\delta)^{-1}\|\,\|x_n^\delta-x^\dagger\|_X,
\end{aligned}
\]
and due \eqref{rprime_suff}, the Taylor remainder can be estimated by means of 
\begin{equation}\label{Taylor}
\|r(x)-r(x^\dagger)+r'(x)(x^\dagger-x)\|_{\widetilde{X}} 
\leq \frac{L_r}{2} \|x^\dagger-x\|_{\widehat{X}}^2,
\end{equation} 
which yields an estimate of the form
\[
\|x_{n+1}^\delta-x^\dagger\|_{\widehat{X}}\leq 
C\Bigl(\frac{\delta}{\sqrt{\alpha_n}} + \|x_n^\delta-x^\dagger\|_{\widehat{X}}^2
+ \revision{a_n(P_\dagger)}\Bigr).
\]
Thus, by choosing $\rho$ sufficiently small, we can conclude from $x_n\in \mathcal{B}_\rho(x^\dagger)$ and \eqref{est_err_Newton} that for some $C>0$, $\bar{c}\in(0,1)$ independent of $n$,
\begin{equation}\label{errest_Newton}
\|x_{n+1}^\delta-x^\dagger\|_{\widehat{X}}\leq 
C\frac{\delta}{\sqrt{\alpha_n}} + \bar{c}\|x_n^\delta-x^\dagger\|_{\widehat{X}}
+ C \revision{a_n(P_\dagger)}.
\end{equation}
Thus $x_{n+1}\in \mathcal{B}_\rho(x^\dagger)$ and as in the proof of Theorem~\ref{thm:convfrozenNewton}, the assertions follow.
\end{proof}
%Estimate \eqref{est_err_Newton} shows that, due to the linear term $c\|x^\dagger-x_n^\delta\|_X$ appearing on the right hand side it does not pay off to impose a quadratic estimate on the Taylor remainder.
%Also, assuming \eqref{NKNPx} for all $x\in U$ might be too restrictive unless the nullspace of $K$ is trivial (a case that will repeatedly appear in the examples below, though).

To arrive at a frozen Newton method in place of \eqref{NewtonHilbert}, we replace $r'(x_n)$ by $r'(x_0)$ in the above 
\begin{equation}\label{NewtonHilbert_frozen}
\begin{aligned}
x_{n+1}^\delta=
x_n^\delta+&
((KR_0)^\star KR_0+P^\star P+\alpha_n R_0^\star R_0)^{-1}\\
&\Bigl((KR_0)^\star (y^\delta-Kr(x_n^\delta)-F(x_0))
-P^\star Px_n^\delta-\alpha_n R_0^\star r(x_n^\delta)\Bigr)
\end{aligned}
\end{equation}
with $R_0=r'(x_0)$, for which we obtain the estimate
\begin{equation}\label{est_err_Newton_frozen}
\|x_{n+1}^\delta-x^\dagger\|_{\widehat{X}}\leq 
\|r'(x_0)^{-1}\|_{L(\widetilde{X},\widehat{X})}\Bigl(
\frac{\delta}{\sqrt{\alpha_n}} + c_0\|x_n^\delta-x^\dagger\|_{\widehat{X}}
+ \revision{a_n(PR_0^{-1})}\Bigr)
\end{equation}
with 
\[
\revision{a_n(PR_0^{-1})}=\alpha_n\|(K^\star K+(PR_0^{-1})^\star PR_0^{-1}+\alpha_n \textup{id})^{-1}r(x^\dagger)\|_{\widetilde{X}} \to0\mbox{ as }n\to\infty 
\]
provided $r(x^\dagger)\in (\nsp(K)\cap \nsp(PR_0^{-1}))^\bot\subseteq \nsp(K^\star K+(PR_0^{-1})^\star PR_0^{-1})^\bot$ and bounded invertibility as well as a Taylor remainder estimate hold
\begin{equation}\label{Taylor0}
\begin{aligned}
&r'(x_0)^{-1}\in L(\widetilde{X},\widehat{X})\textup{ and }\\
&\exists c_0\in(0,1)\, \forall x\in U\, : \ \|r(x)-r(x^\dagger)+r'(x_0)(x^\dagger-x)\|_{\widetilde{X}}\leq c_0 \|x^\dagger-x\|_{\widehat{X}}.
\end{aligned}
\end{equation}

Similarly to Theorem~\ref{thm:convNewton} we obtain
\begin{theorem}\label{thm:convNewton0}
Let $x_0\in U:=\mathcal{B}_\rho^{\widehat{X}}(x^\dagger)$ for some $\rho>0$ sufficiently small.
Assume that $F$ satisfies \eqref{rangeinvar_diff} with \eqref{Taylor0},  
$r(x^\dagger)\in \bigl(\nsp(K)\cap \nsp(PR(x_0)^{-1})\bigr)^\bot$, $K\in L(\widetilde{X},Y)$, and
\begin{equation}\label{NKNP0}
\textup{(a) }\nsp(K)^\bot \subseteq \nsp(P_0)
\textup{ or \ (b) }P_0^\star P_0(K^\star K)^{1/2} = (K^\star K)^{1/2}P_0^\star P_0
\end{equation}
with  $P_0:=Pr'(x_0)^{-1}$.
Let the stopping index $n_*=n_*(\delta)$ be chosen according to \eqref{nstar} with $c$ replaced by $c_0$ and $c_0$ as in \eqref{Taylor0}.

Then the iterates $(x_n^\delta)_{n\in\{1,\ldots,n_*(\delta)\}}$ are well-defined by \eqref{NewtonHilbert_frozen}, remain in $\mathcal{B}_\rho^{\widehat{X}}(x^\dagger)$ and converge in $X$, $\|x_{n_*(\delta)}^\delta-x^\dagger\|_{\widehat{X}}\to0$ as $\delta\to0$. In the noise free case $\delta=0$, $n_*(\delta)=\infty$ we have $\|x_n-x^\dagger\|_{\widehat{X}}\to0$ as $n\to\infty$.
\end{theorem}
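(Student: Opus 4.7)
The plan is to mirror the proof of Theorem~\ref{thm:convfrozenNewton}, now working relative to the frozen linearisation $R_0 = r'(x_0)$. First I will establish the error identity: substituting \eqref{rangeinvar_diff} (so that $F(x_n^\delta) - F(x_0) = Kr(x_n^\delta)$ and $y = F(x_0) + Kr(x^\dagger)$) and using $Px^\dagger = 0$ in \eqref{NewtonHilbert_frozen}, then factoring $R_0$ and $R_0^\star$ out of the operator that must be inverted, I obtain
\[
x_{n+1}^\delta - x^\dagger = R_0^{-1}(K^\star K + P_0^\star P_0 + \alpha_n \textup{id})^{-1}\Bigl(K^\star(y^\delta - y) - (K^\star K + \alpha_n \textup{id})\textup{res}_n - \alpha_n r(x^\dagger)\Bigr),
\]
where $P_0 := PR_0^{-1}$ and $\textup{res}_n := r(x_n^\delta) - r(x^\dagger) + R_0(x^\dagger - x_n^\delta)$. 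Bounded invertibility of $R_0$ is guaranteed by \eqref{Taylor0}, so the factorisation is legitimate and the iterates are well-defined.

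Next I invoke Lemma~\ref{spectralbounds} with the pair $(K, P_0)$, whose compatibility hypothesis holds by \eqref{NKNP0}, to bound the norms of $(K^\star K + P_0^\star P_0 + \alpha_n \textup{id})^{-1}K^\star$ and $(K^\star K + P_0^\star P_0 + \alpha_n \textup{id})^{-1}K^\star K$ by $\sqrt{C/\alpha_n}$ and $C$ respectively. Using \eqref{delta}, the Taylor-type bound \eqref{Taylor0}, and the definition of $a_n(PR_0^{-1})$, these estimates yield precisely \eqref{est_err_Newton_frozen}. The orthogonality condition $r(x^\dagger) \in \bigl(\nsp(K) \cap \nsp(P_0)\bigr)^\bot = \nsp(K^\star K + P_0^\star P_0)^\bot$ together with standard spectral calculus for the bounded selfadjoint nonnegative operator $K^\star K + P_0^\star P_0$ yields $a_n(PR_0^{-1}) \to 0$ as $n \to \infty$.

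Exactly as in the proof of Theorem~\ref{thm:convfrozenNewton}, unrolling \eqref{est_err_Newton_frozen} (with contraction factor $c_0 < 1$ from \eqref{Taylor0}) gives
\[
\|x_{n+1}^\delta - x^\dagger\|_{\widehat{X}} \leq \|R_0^{-1}\|\Bigl(\delta \sum_{j=0}^n c_0^j \alpha_{n-j}^{-1/2} + c_0^{n+1}\|x_0 - x^\dagger\|_{\widehat{X}} + \sum_{j=0}^n c_0^j a_{n-j}(PR_0^{-1})\Bigr),
\]
and the stopping rule \eqref{nstar} (with $c_0$ in place of $c$) drives the right-hand side to zero as $\delta \to 0$ with $n = n_*(\delta)$, or as $n \to \infty$ when $\delta = 0$. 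The main obstacle is the simultaneous induction keeping $x_n^\delta \in \mathcal{B}_\rho^{\widehat{X}}(x^\dagger)$ throughout, so that \eqref{Taylor0} and hence \eqref{est_err_Newton_frozen} remain applicable at every step; this closes once $\rho$ is chosen small enough that the three summands above remain dominated by a fixed fraction of $\rho$, completely analogously to the argument for Theorem~\ref{thm:convfrozenNewton}.
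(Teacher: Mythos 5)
Your proposal is correct and follows essentially the same route as the paper: the paper derives the error identity and the estimate \eqref{est_err_Newton_frozen} by replacing $r'(x_n^\delta)$ with $r'(x_0)$ in the derivation preceding Theorem~\ref{thm:convNewton}, applies Lemma~\ref{spectralbounds} to the pair $(K,P_0)$ under \eqref{NKNP0}, and then concludes by the same recursion-unrolling and induction-in-the-ball argument as in Theorems~\ref{thm:convfrozenNewton} and \ref{thm:convNewton}. Your write-up simply makes explicit the steps the paper compresses into ``Similarly to Theorem~\ref{thm:convNewton} we obtain.''
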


Note that in case of the canonical choice $K=F'(x_0)$ we have $r'(x_0)=\textup{id}$ and therefore \eqref{Taylor0} enforces $\widehat{X}=\widetilde{X}$.

\subsection{Convergence in the setting of Section~\ref{sec:exclass}}
In the reduced setting of Section~\ref{sec:ex_red}, Theorems~\ref{thm:convNewton}, \ref{thm:convNewton0} yield the following two corollaries.

\begin{corollary}[Newton, reduced]\label{cor:convNewton_red}
Let $q_0\in U:=\mathcal{B}_\rho^{\widetilde{Q}_J}(q^\dagger)\subseteq\mathcal{D}(\mathbf{F})$ for some $\rho>0$ sufficiently small, assume that $B:V_J\to L(\widetilde{Q}_J,W^*_J)$ is Lipschitz continuously differentiable, and $D\in L(V_J,W^*_J)$.
Moreover, let 
$r(q^\dagger)\in \bigl(\nsp(\mathbf{F}'(q_0))\cap \nsp(Pr'(q^\dagger)^{-1})\bigr)^\bot$,
\eqref{NKNPx} hold with $K=\mathbf{F}'(q_0)$, and the stopping index $n_*=n_*(\delta)$ be chosen according to \eqref{nstar}.
%with $c$ replaced by $c_0$
\\
Then the iterates $(q_n^\delta)_{n\in\{1,\ldots,n_*(\delta)\}}$ are well-defined by \eqref{NewtonHilbert}, \eqref{F_red}, remain in $\mathcal{B}_\rho^{\widetilde{Q}_J}(q^\dagger)$ and converge in $\widetilde{Q}_J$, $\|q_{n_*(\delta)}^\delta-q^\dagger\|_{\widetilde{Q}_J}\to0$ as $\delta\to0$. In the noise free case $\delta=0$, $n_*(\delta)=\infty$ we have $\|q_n-q^\dagger\|_{\widetilde{Q}_J}\to0$ as $n\to\infty$.
\end{corollary}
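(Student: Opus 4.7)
The plan is to verify the hypotheses of Theorem~\ref{thm:convNewton} with $F=\mathbf{F}$, $X=\widehat{X}=\widetilde{X}=\widetilde{Q}_J$, and $K=\mathbf{F}'(q_0)$. Range invariance \eqref{rangeinvar_diff} with $r$ defined by \eqref{def_r_red}--\eqref{r0diff} has already been established in Section~\ref{sec:ex_red}, so the verification reduces to checking \eqref{rprime_suff}; the remaining hypotheses --- the nullspace condition on $r(q^\dagger)$, the relation \eqref{NKNPx}, and the stopping rule --- are assumed directly in the corollary.

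First I would show that $r$ is G\^{a}teaux differentiable: the components $r^{(i)}$, $i\geq 1$, are affine, and for $r^{(0)}$, Lipschitz continuous differentiability of $B$ together with bijectivity of $D+[B'(S(q))\cdot]q$ yields G\^{a}teaux differentiability of $S$ with derivative \eqref{dSdq0}, so the chain rule transfers differentiability to $r^{(0)}_{\textup{diff}}(q,S(q);q_0,S(q_0))$. Evaluating \eqref{Gprime} at $q=q_0$ (where the curly-bracket term vanishes) gives $r'(q_0)=\textup{id}$, so $r'(q_0)^{-1}\in L(\widetilde{Q}_J,\widetilde{Q}_J)$ trivially.

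The substantive step is the Lipschitz estimate $\|r'(q^\dagger)-r'(q)\|_{L(\widetilde{Q}_J,\widetilde{Q}_J)}\leq L_r\|q^\dagger-q\|_{\widetilde{Q}_J}$ with $L_r\rho<1$. Differentiating the explicit formula for $r^{(0)}$ via \eqref{Gprime}--\eqref{dSdq0} and subtracting the expressions at $q^\dagger$ and $q$, the difference splits into finitely many terms, each involving one of: $B'(S(q^\dagger))-B'(S(q))$, products of $B'$ or $B$ with $(q-q_0)$ or $(q^\dagger-q_0)$, $B^{(0)}(S(q^\dagger))-B^{(0)}(S(q))$, or the difference $\tfrac{\partial S}{\partial q^{(0)}}(q^\dagger)-\tfrac{\partial S}{\partial q^{(0)}}(q)$. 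The first three groups are handled routinely by combining Lipschitz continuous differentiability of $B$ with the Lipschitz bound $\|S(q^\dagger)-S(q)\|_{V_J}\leq M\|q^\dagger-q\|_{\widetilde{Q}_J}$ already derived in \eqref{udiff}.

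The main obstacle I anticipate is controlling the last family of terms, namely the operator-norm bound on $\tfrac{\partial S}{\partial q^{(0)}}(q^\dagger)-\tfrac{\partial S}{\partial q^{(0)}}(q)$. I would derive it by writing \eqref{Sqprime} at both $q^\dagger$ and $q$, subtracting, inverting $D+[B'(S(q^\dagger))\cdot]q^\dagger$, and absorbing the resulting error terms via Lipschitz continuity of $B'$. Once these bounds combine into \eqref{rprime_suff}, Theorem~\ref{thm:convNewton} directly yields well-definedness of the iterates, their confinement to $\mathcal{B}_\rho^{\widetilde{Q}_J}(q^\dagger)$, and the convergence $\|q_{n_*(\delta)}^\delta-q^\dagger\|_{\widetilde{Q}_J}\to0$ as $\delta\to0$, together with the noise-free convergence.
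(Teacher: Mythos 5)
Your proposal is correct and follows essentially the same route as the paper: reduce everything to verifying \eqref{rprime_suff} for Theorem~\ref{thm:convNewton} with $\widehat{X}=\widetilde{X}=\widetilde{Q}_J$ and $r'(q_0)=\textup{id}$, then bound $\bigl(r_\textup{diff}^{(0)}{}'(q^\dagger,S(q^\dagger);q_0,S(q_0))-r_\textup{diff}^{(0)}{}'(q,S(q);q_0,S(q_0))\bigr)\uldq$ by splitting it into the same groups of terms and controlling $S(q^\dagger)-S(q)$ and $S'(q^\dagger)\uldq-S'(q)\uldq$ via the resolvent formulas from \eqref{Sqdiff} and \eqref{Sqprime} together with Lipschitz continuous differentiability of $B$. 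The only cosmetic difference is that the paper measures the difference through $\|B_0\,\underline{dr}\|_{W^*_J}$ using the isomorphism property \eqref{B0reg} rather than directly in $\widetilde{Q}^{(0)}_J$, which is the same computation.
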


\begin{proof}
To establish \eqref{rprime_suff}, note that $K=\mathbf{F}(q_0)$, thus $r'(q_0)=\textup{id}$ and we have set $\widehat{X}=\widetilde{X}=\widetilde{Q}_J$. Moreover, with 
$\underline{dr}:=\bigl({r^{(0)}_\textup{diff}}'(q^\dagger,S(q^\dagger);q_0,S(q_0))
-{r^{(0)}_\textup{diff}}'(q,S(q);q_0,S(q_0))\bigr)\uldq$ we have 
\[
\|(r'(q^\dagger)-r'(q))\uldq\|_{\widetilde{Q}_J}=
\|\underline{dr}\|_{\widetilde{Q}^{(0)}_J}
\sim \|B_0\underline{dr}\|_{W^*_J}\,,
\]
%using \eqref{Gqnr}, \eqref{Gprime}, \eqref{dSdq0}. 
due to the assumed isomorphism property of $B_0$.
With $u=S(q)$, $u^\dagger=S(q^\dagger)$, $v=S'(q)\uldq$, $v^\dagger=S'(q^\dagger)\uldq$ we can rewrite 
\[
\begin{aligned}
B_0\underline{dr}=&
[B'(u^\dagger)v^\dagger]q^\dagger-[B'(u_0)v^\dagger]q_0+B(u^\dagger)\uldq
-([B'(u)v]q-[B'(u_0)v]q_0+B(u)\uldq)\\
=&
[(B'(u^\dagger)-B'(u))v^\dagger]q^\dagger
+[B'(u)(v^\dagger-v)]q^\dagger
+[B'(u)v](q^\dagger-q)\\
&-[B'(u_0)(v^\dagger-v)]q_0
+[B(u^\dagger)-B(u)]\uldq
\end{aligned}
\]
where as in \eqref{Sqdiff}, \eqref{Fprime_red}
\[
\begin{aligned}
&u^\dagger-u = -\Bigl(D+[\overline{B'}\cdot] q\Bigr)^{-1}B(u^\dagger)(q^\dagger-q), \\
&v^\dagger =-\Bigl(D+[B'(u^\dagger)\cdot] q^\dagger\Bigr)^{-1}B(u^\dagger)\uldq, \quad
v =-\Bigl(D+[B'(u)\cdot] q\Bigr)^{-1}B(u)\uldq
\end{aligned}
\]
with $[\overline{B'}\uldu]q=\int_0^1[B'(u+\theta(u^\dagger-u))\uldu]q\, d\theta$.
Thus using our assumption of $B$ being Lipschitz continuously differentiable as an operator $V_J\to L(\widetilde{Q}_J,W^*_J)$, we can estimate
$\|B_0\underline{dr}\|_{W^*_J}\leq L_r\|q^\dagger-q\|_{\widetilde{Q}_J}$ for some constant $L_r$.
\end{proof}
\begin{corollary}[alternative frozen Newton, reduced]\label{cor:convNewton0_red}
Let $q_0\in U:=\mathcal{B}_\rho^{\widetilde{Q}_J}(q^\dagger)\subseteq\mathcal{D}(\mathbf{F})$ for some $\rho>0$ sufficiently small, assume that $B:V_J\to L(\widetilde{Q}_J,W^*_J)$ is continuously differentiable, and $D\in L(V_J,W^*_J)$.
Moreover, let 
$r(q^\dagger)\in \bigl(\nsp(\mathbf{F}'(q_0))\cap \nsp(P)\bigr)^\bot$,
\eqref{NKNP0} hold with $K=\mathbf{F}'(q_0)$, $P_0:=P$ and the stopping index $n_*=n_*(\delta)$ be chosen according to \eqref{nstar}.
%with $c$ replaced by $c_0$
\\
Then the iterates $(q_n^\delta)_{n\in\{1,\ldots,n_*(\delta)\}}$ are well-defined by \eqref{NewtonHilbert_frozen}, \eqref{F_red}, remain in $\mathcal{B}_\rho^{\widetilde{Q}_J}(q^\dagger)$ and converge in $\widetilde{Q}_J$, $\|q_{n_*(\delta)}^\delta-q^\dagger\|_{\widetilde{Q}_J}\to0$ as $\delta\to0$. In the noise free case $\delta=0$, $n_*(\delta)=\infty$ we have $\|q_n-q^\dagger\|_{\widetilde{Q}_J}\to0$ as $n\to\infty$.
\end{corollary}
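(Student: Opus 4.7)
The plan is to verify the hypotheses of Theorem~\ref{thm:convNewton0} and then appeal to it. The key preliminary observation is that, with the canonical choice $K=\mathbf{F}'(q_0)$ inherent in the reduced formulation, the identification $r(q_0)=0$ together with the definition \eqref{def_r_red} gives $r'(q_0)=\textup{id}$. Consequently $R_0=\textup{id}$ is trivially boundedly invertible on $\widetilde{Q}_J$, so one may set $\widehat{X}=\widetilde{X}=\widetilde{Q}_J$, and the penalty operator simplifies as $P_0=Pr'(q_0)^{-1}=P$. In particular, the hypotheses \eqref{NKNP0} and $r(q^\dagger)\in(\nsp(K)\cap\nsp(P))^\bot$ in the corollary coincide with the abstract conditions required by Theorem~\ref{thm:convNewton0}.

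First I would verify the range invariance condition \eqref{rangeinvar_diff} together with the function-space requirement $K\in L(\widetilde{X},Y)$. This is exactly the content of the construction in Section~\ref{sec:ex_red}: \eqref{def_r_red}--\eqref{r0diff} produces an $r$ satisfying $\mathbf{F}(q)-\mathbf{F}(q_0)=Kr(q)$, and the isomorphism assumption \eqref{B0reg} together with $D\in L(V_J,W_J^*)$ ensures $K=\mathbf{F}'(q_0)\in L(\widetilde{Q}_J,Y_J)$. G\^ateaux differentiability of $r$ follows from differentiability of $B$ via the implicit definition of $S$.

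The main step, and the only point requiring nontrivial work, is the Taylor-type estimate \eqref{Taylor0} with some $c_0\in(0,1)$. With $r'(q_0)=\textup{id}$, the definition \eqref{def_r_red} gives cancellation of the identity parts, and \eqref{Taylor0} reduces to
\[
\|r^{(0)}_\textup{diff}(q^\dagger,S(q^\dagger);q_0,S(q_0))-r^{(0)}_\textup{diff}(q,S(q);q_0,S(q_0))\|_{\widetilde{Q}^{(0)}_J}\leq c_0\|q-q^\dagger\|_{\widetilde{Q}_J}.
\]
This is precisely the estimate \eqref{est_rdiff} already derived in the proof of Corollary~\ref{cor:convfrozenNewton_red}. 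Together with \eqref{udiff} and continuous differentiability of $B$, the three terms on the right-hand side of \eqref{est_rdiff} each carry a factor of the form $\|q-q_0\|$, $\|q-q^\dagger\|$, or $o(\|u-u^\dagger\|)$; the latter is controlled by $M\|q-q^\dagger\|_{Q_J}$ via \eqref{udiff}. By choosing $\rho$ sufficiently small, the resulting prefactor can be made arbitrarily small, yielding $c_0<1$ as required.

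The subtle point on which I would need to be careful is that \eqref{Taylor0} demands a \emph{uniform} constant $c_0\in(0,1)$ over the ball $\mathcal{B}_\rho^{\widetilde{Q}_J}(q^\dagger)$, not merely an estimate with a vanishing prefactor near $q^\dagger$; this is where the assumption that $\rho$ is sufficiently small enters, ensuring that the modulus of continuity of $B'$ on the (bounded) image $S(U)$ is small enough. With \eqref{Taylor0} in hand, all hypotheses of Theorem~\ref{thm:convNewton0} are satisfied, and well-definedness, stability of the iterates within $\mathcal{B}_\rho^{\widetilde{Q}_J}(q^\dagger)$, and convergence in $\widetilde{Q}_J$ under the stopping rule \eqref{nstar} follow directly.
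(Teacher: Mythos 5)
Your proposal is correct and follows essentially the same route as the paper: it exploits $r'(q_0)=\textup{id}$ (so that the Taylor remainder in \eqref{Taylor0} collapses to the difference of $r^{(0)}_\textup{diff}$ values, i.e.\ \eqref{est_rdiff}), then bounds this via \eqref{udiff}, the isomorphism property of $B_0$, and continuous differentiability of $B$, with $c_0<1$ secured by taking $\rho$ small. The only cosmetic difference is that the paper justifies $r'(q_0)=\textup{id}$ by the explicit computation \eqref{rdiffprime0} showing the derivative of $r^{(0)}_\textup{diff}$ vanishes at $q_0$, whereas you attribute it somewhat loosely to $r(q_0)=0$; the substance is the same.
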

\begin{proof}
Also here, $r'(x_0)=\textup{id}$ and we only need to estimate the Taylor remainder \eqref{Taylor0} for the nonlinear part $r^{(0)}_\textup{diff}$ of $r$, which due to the fact that 
\[
\frac{d}{dq}r^{(0)}_\textup{diff}(q_0,S(q_0);q_0,u_0)\uldq
=\frac{\partial r^{(0)}_\textup{diff}}{\partial q}(q_0,S(q_0);q_0,u_0)\uldq
+\frac{\partial r^{(0)}_\textup{diff}}{\partial u}(q_0,S(q_0);q_0,u_0)S'(q_0)\uldq
\]
where
\begin{equation}\label{rdiffprime0}
\begin{aligned}
&\frac{\partial r^{(0)}_\textup{diff}}{\partial q}(q_0,S(q_0);q_0,u_0)\uldq 
=\Bigl[B_0^{-1}\Bigl([B(u)-B(u_0)](q-q_0)\Bigr)\Bigr]_{q=q_0,u=u_0}=0\\
&\frac{\partial r^{(0)}_\textup{diff}}{\partial q}(q_0,S(q_0);q_0,u_0)\uldq 
\\&
=\Bigl[B_0^{-1}\Bigl([B'(u)\uldu-B'(u_0)\uldu]q_0
+[B'(u)\uldu](q-q_0)\Bigr)\Bigr]_{q=q_0,u=u_0}=0
\end{aligned}
\end{equation}
is just equal to \eqref{est_rdiff}.
Using \eqref{udiff}, the assumed isomorphism property of $B_0$ and the assumed continuous differentiability of $B$ as an operator $V_J\to L(\widetilde{Q}_J,W^*_J)$ in \eqref{est_rdiff} yields the desired estimate.
\end{proof}
\medskip

In the all-at-once setting of Section~\ref{sec:ex_aao} we obtain the following.
\begin{corollary}[Newton, all-at-once]\label{cor:convNewton_aao}
Let $(q_0,u_0)\in U:=\mathcal{B}_\rho^{\widetilde{Q}_J\times V_J}(q^\dagger,u^\dagger)\subseteq \widetilde{Q}_J\times V_J$
for some $\rho>0$ sufficiently small, assume that $B:V_J\to L(\widetilde{Q}_J,W^*_J)$ is Lipschitz continuously differentiable, and $D\in L(V_J,W^*_J)$.
Moreover, let 
$r(q^\dagger,u^\dagger)\in \bigl(\nsp(\mathbb{F}'(q_0,u_0))\cap \nsp(Pr'(q^\dagger,u^\dagger)^{-1})\bigr)^\bot$,
\eqref{NKNPx} hold with $K=\mathbb{F}'(q_0,u_0)$, \eqref{Fprime_aao} and the stopping index $n_*=n_*(\delta)$ be chosen according to \eqref{nstar}.
%with $c$ replaced by $c_0$
\\
Then the iterates $(q_n^\delta,u_n^\delta)_{n\in\{1,\ldots,n_*(\delta)\}}$ are well-defined by \eqref{NewtonHilbert}, \eqref{F_aao}, remain in $\mathcal{B}_\rho^{\widetilde{Q}_J\times V_J}(q^\dagger,u^\dagger)$ and converge in $\widetilde{Q}_J\times V_J$, $\|q_{n_*(\delta)}^\delta-q^\dagger\|_{\widetilde{Q}_J}\to0$, $\|u_{n_*(\delta)}^\delta-u^\dagger\|_{V_J}\to0$ as $\delta\to0$. In the noise free case $\delta=0$, $n_*(\delta)=\infty$ we have $\|q_n-q^\dagger\|_{\widetilde{Q}_J}\to0$, $\|u_n-u^\dagger\|_{V_J}\to0$ as $n\to\infty$.
\end{corollary}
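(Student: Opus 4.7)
The plan is to reduce the statement to Theorem~\ref{thm:convNewton} applied to $F=\mathbb{F}$ with $X=\widehat{X}=\widetilde{X}=\widetilde{Q}_J\times V_J$ and $Y=W^*_J\times Y_J$. Since $K=\mathbb{F}'(q_0,u_0)$ is the canonical choice, as noted at the end of Appendix~\ref{Appendix:Newton}, the map $r$ defined in \eqref{def_r_aao} satisfies $r'(q_0,u_0)=\textup{id}$, which makes $\widehat{X}=\widetilde{X}$ automatic. The range invariance condition \eqref{rangeinvar_diff} with this $K$ and this $r$ is exactly what was verified in Section~\ref{sec:ex_aao}, so the remaining conditions $r(q^\dagger,u^\dagger)\in(\nsp(K)\cap\nsp(Pr'(q^\dagger,u^\dagger)^{-1}))^\bot$ and \eqref{NKNPx} are assumed directly in the corollary.

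The real work is to verify \eqref{rprime_suff}, namely Lipschitz continuity of $r'$ on $U$. By \eqref{def_r_aao}, only the component $r_q^{(0)}$ is nonlinear in $(q,u)$ via the term $r_\textup{diff}^{(0)}(q,u;q_0,u_0)$; the other components $r_q^{(i)}$ ($i\geq1$) and $r_u$ are affine, so their contribution to $r'(q^\dagger,u^\dagger)-r'(q,u)$ vanishes. The essential computation is therefore to bound
\[
\|B_0\bigl(r_\textup{diff}^{(0)\,\prime}(q^\dagger,u^\dagger;q_0,u_0)-r_\textup{diff}^{(0)\,\prime}(q,u;q_0,u_0)\bigr)(\uldq,\uldu)\|_{W^*_J}
\]
using the isomorphism property \eqref{B0reg} of $B_0$ to replace the $\widetilde{Q}^{(0)}_J$-norm by $\|B_0\cdot\|_{W^*_J}$.

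Differentiating \eqref{r0diff} with respect to $(q,u)$ and subtracting the values at $(q^\dagger,u^\dagger)$ and at $(q,u)$ yields, as in the proof of Corollary~\ref{cor:convNewton_red} but without having to account for $u=S(q)$, a sum of terms of the form $[(B'(u^\dagger)-B'(u))\uldu]q_0$, $[B'(u^\dagger)\uldu](q^\dagger-q)$, $(B(u^\dagger)-B(u))\uldq$, $[(B'(u^\dagger)-B'(u))\uldu]q^\dagger$, and $[B'(u)\uldu](q^\dagger-q)$. Lipschitz continuous differentiability of $B:V_J\to L(\widetilde{Q}_J,W^*_J)$ together with boundedness of $q_0,q^\dagger,q$ and the Mean Value Theorem immediately provides the estimate
\[
\|(r'(q^\dagger,u^\dagger)-r'(q,u))(\uldq,\uldu)\|_{\widetilde{Q}_J\times V_J}
\leq L_r\,\|(q^\dagger,u^\dagger)-(q,u)\|_{\widetilde{Q}_J\times V_J}\,\|(\uldq,\uldu)\|_{\widetilde{Q}_J\times V_J}
\]
with a constant $L_r$ independent of $(q,u)\in U$, and smallness of $L_r\rho$ by choosing $\rho$ small. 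Thus \eqref{rprime_suff} holds, and Theorem~\ref{thm:convNewton} produces the claimed well-definedness, stability in $\mathcal{B}_\rho^{\widetilde{Q}_J\times V_J}(q^\dagger,u^\dagger)$, and convergence.

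The main simplification compared to the reduced Corollary~\ref{cor:convNewton_red} is that here $u$ and $q$ are independent variables, so no Implicit Function Theorem or estimate of $\|S(q)-S(q^\dagger)\|_{V_J}$ in terms of $\|q-q^\dagger\|_{Q_J}$ is required; the only non-routine step is the bookkeeping in the derivative of $r_\textup{diff}^{(0)}$, and the main obstacle is really the need to work with the possibly weaker norm $\|\cdot\|_{\widetilde{Q}^{(0)}_J}$ induced by $B_0^{-1}$ through \eqref{Qtil}, which is exactly what makes the isomorphism assumption \eqref{B0reg} indispensable.
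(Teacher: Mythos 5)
Your proposal is correct and follows essentially the same route as the paper: verify the Lipschitz condition \eqref{rprime_suff} by observing that only the nonlinear part $r^{(0)}_\textup{diff}$ contributes to $r'(q^\dagger,u^\dagger)-r'(q,u)$, pull the estimate through $B_0^{-1}$ via \eqref{B0reg}, bound the resulting difference terms using Lipschitz continuous differentiability of $B$, and invoke Theorem~\ref{thm:convNewton}. The only blemish is that your list of difference terms contains a couple of redundant entries (e.g.\ $[(B'(u^\dagger)-B'(u))\uldu]q_0$ cancels exactly, since the $q_0$-term in \eqref{r0diff} is independent of $(q,u)$); the paper's computation yields just the three terms $[(B'(u^\dagger)-B'(u))\uldu]q^\dagger$, $[B'(u)\uldu](q^\dagger-q)$ and $[B(u^\dagger)-B(u)]\uldq$, but this does not affect the validity of your estimate.
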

\begin{proof}
We obtain \eqref{rprime_suff} by estimating
\[
\begin{aligned}
&\|(r'(q^\dagger,u^\dagger)-r'(q,u))(\uldq,\uldu)\|_{\widetilde{Q}_J\times V_J}\\
&=\|\bigl({r^{(0)}_\textup{diff}}'(q^\dagger,S(q^\dagger);q_0,S(q_0))
-{r^{(0)}_\textup{diff}}'(q,S(q);q_0,S(q_0))\bigr)(\uldq,\uldu)
\|_{\widetilde{Q}^{(0)}_J}\\
&\leq\|B_0^{-1}\|_{W^*,\widetilde{Q}^{(0)}_J}
\|[(B'(u^\dagger)-B'(u))\uldu]q^\dagger+[B'(u)\uldu](q^\dagger-q)
+[B(u^\dagger)-B(u)]\uldq\|_{W^*_J}.
\end{aligned}
\]
\end{proof}
\begin{corollary}[alternative frozen Newton, all-at-once]\label{cor:convNewton0_aao}
Let $(q_0,u_0)\in U:=$\\ $\mathcal{B}_\rho^{\widetilde{Q}_J\times V_J}(q^\dagger,u^\dagger)\subseteq \widetilde{Q}_J\times V_J$
for some $\rho>0$ sufficiently small, assume that $B:V_J\to L(\widetilde{Q}_J,W^*_J)$ is continuously differentiable, and $D\in L(V_J,W^*_J)$.
Moreover, let 
$r(q^\dagger,u^\dagger)\in \bigl(\nsp(\mathbb{F}'(q_0,u_0))\cap \nsp(Pr'(q_0,u_0)^{-1})\bigr)^\bot$,
\eqref{NKNP0} hold with $K=\mathbb{F}'(q_0,u_0)$, $P_0:=Pr'(q_0,u_0)^{-1}$ and the stopping index $n_*=n_*(\delta)$ be chosen according to \eqref{nstar}.
%with $c$ replaced by $c_0$
\\
Then the iterates $(q_n^\delta,u_n^\delta)_{n\in\{1,\ldots,n_*(\delta)\}}$ are well-defined by \eqref{NewtonHilbert_frozen}, \eqref{F_aao}, remain in $\mathcal{B}_\rho^{\widetilde{Q}_J\times V_J}(q^\dagger,u^\dagger)$ and converge in $\widetilde{Q}_J\times V_J$, $\|q_{n_*(\delta)}^\delta-q^\dagger\|_{\widetilde{Q}_J}\to0$, $\|u_{n_*(\delta)}^\delta-u^\dagger\|_{V_J}\to0$ as $\delta\to0$. In the noise free case $\delta=0$, $n_*(\delta)=\infty$ we have $\|q_n-q^\dagger\|_{\widetilde{Q}_J}\to0$, $\|u_n-u^\dagger\|_{V_J}\to0$ as $n\to\infty$.
\end{corollary}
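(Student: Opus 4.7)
The plan is to invoke Theorem~\ref{thm:convNewton0} with the canonical choice $K = \mathbb{F}'(q_0,u_0)$, which by the formal identity $\mathbb{F}'(q_0,u_0)(\uldq,\uldu) = K\,r'(q_0,u_0)(\uldq,\uldu)$ (cf.~\eqref{FprimeK} and the definition \eqref{def_r_aao}) forces $r'(q_0,u_0) = \mathrm{Id}$ on $\widetilde{Q}_J \times V_J$. This determines the ambient spaces as $\widehat{X} = \widetilde{X} = \widetilde{Q}_J \times V_J$ and reduces the non-trivial hypothesis of Theorem~\ref{thm:convNewton0} to verifying the Taylor-type estimate \eqref{Taylor0}, since \eqref{NKNP0}, the orthogonality on $r(q^\dagger,u^\dagger)$, and the stopping rule \eqref{nstar} are imposed directly in the statement.

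The key step is thus to establish \eqref{Taylor0}. Since $r'(q_0,u_0)=\mathrm{Id}$, it reads
\[
\|r(q,u) - r(q^\dagger,u^\dagger) + (q^\dagger - q, u^\dagger - u)\|_{\widetilde{Q}_J \times V_J}
\leq c_0\, \|(q^\dagger,u^\dagger)-(q,u)\|_{\widetilde{Q}_J\times V_J}.
\]
By \eqref{def_r_aao}, the components $r_q^{(i)}$ for $i\in\{1,\ldots,I\}$ and $r_u$ are affine, so the left-hand side collapses to $\|r^{(0)}_{\textup{diff}}(q,u;q_0,u_0) - r^{(0)}_{\textup{diff}}(q^\dagger,u^\dagger;q_0,u_0)\|_{\widetilde{Q}^{(0)}_J}$, exactly the quantity already estimated in \eqref{est_rdiff} in the proof of Corollary~\ref{cor:convfrozenNewton_aao}.

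Combining \eqref{est_rdiff} with the assumed continuous differentiability of $B:V_J\to L(\widetilde{Q}_J,W^*_J)$ and the isomorphism property \eqref{B0reg} of $B_0$ (so that $B_0^{-1}$ has a finite operator norm), the three bracketed contributions satisfy
\[
\begin{aligned}
&\|[B(u)-B(u^\dagger)-B'(u_0)(u-u^\dagger)]q_0\|_{W^*_J}
\leq \omega(\rho)\, \|u-u^\dagger\|_{V_J},\\
&\|[B(u)-B(u^\dagger)](q-q_0)\|_{W^*_J} + \|[B(u^\dagger)-B(u_0)](q-q^\dagger)\|_{W^*_J}
\leq \omega(\rho)\bigl(\|q-q^\dagger\|_{Q_J}+\|u-u^\dagger\|_{V_J}\bigr),
\end{aligned}
\]
with $\omega(\rho)\to 0$ as $\rho\to 0$, since $\|q_0-q^\dagger\|, \|q-q_0\|, \|u_0-u^\dagger\|, \|u-u_0\|$ are all bounded by $2\rho$ and $B'$ is uniformly continuous on the relevant bounded set. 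Taking $\rho$ small enough yields $c_0 := C_B\omega(\rho)<1$ as required by \eqref{Taylor0}.

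The remaining ingredients are direct: the orthogonality $r(q^\dagger,u^\dagger)\in(\nsp(K)\cap\nsp(Pr'(q_0,u_0)^{-1}))^\bot$ and \eqref{NKNP0} with $P_0=P$ are assumed; \eqref{nstar} with $c$ replaced by $c_0$ is assumed; and $K\in L(\widetilde{Q}_J\times V_J, W^*_J\times Y_J)$ holds by \eqref{B0reg} together with $D\in L(V_J,W^*_J)$ and $C\in L(V_J,Y_J)$. Theorem~\ref{thm:convNewton0} then delivers well-definedness of the iterates, confinement to $\mathcal{B}_\rho^{\widetilde{Q}_J\times V_J}(q^\dagger,u^\dagger)$, and the claimed norm convergence of $q_{n_*(\delta)}^\delta$ and $u_{n_*(\delta)}^\delta$ as $\delta\to 0$, as well as convergence $q_n\to q^\dagger$, $u_n\to u^\dagger$ in the noise-free limit. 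The only mild subtlety — and hence the step most worth checking carefully — is the derivation of the \emph{linear} (as opposed to merely continuous) bound in \eqref{Taylor0}, which relies on the fact that $\frac{\partial r^{(0)}_{\textup{diff}}}{\partial q}(q_0,u_0;q_0,u_0)$ and $\frac{\partial r^{(0)}_{\textup{diff}}}{\partial u}(q_0,u_0;q_0,u_0)$ both vanish, as recorded in \eqref{rdiffprime0}; this vanishing is what makes the constant $c_0$ small when $\rho$ is small, and without it the frozen scheme would not contract.
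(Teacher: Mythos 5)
Your proposal is correct and follows essentially the same route as the paper: the paper's (very terse) proof likewise reduces the corollary to verifying the Taylor remainder estimate \eqref{Taylor0} for the nonlinear part $r^{(0)}_\textup{diff}$, observes via \eqref{rdiffprime0} that this remainder coincides with the difference \eqref{est_rdiff}, and concludes by continuous differentiability of $B$ and smallness of $\rho$. Your write-up merely spells out the intermediate bounds (the $\omega(\rho)$ estimates and the role of $r'(q_0,u_0)=\textup{id}$) that the paper leaves implicit.
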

\begin{proof}
Again the crucial estimate to establish \eqref{Taylor0} is the one on the Taylor remainder of $\textup{res}_{0\textup{Taylor}}(r^{(0)}_\textup{diff})$ and due to \eqref{rdiffprime0} equals \eqref{est_rdiff}.
\end{proof}

\section{Linearized uniqueness proofs} \label{Appendix:LinUniqueness}

\subsection{Nullspace condition \eqref{NKNP_ex} for the elliptic case in Example~\ref{ex:pot_revisited}}\label{sec_LinUni_pot_rev_ell}

We use the fact that $\mathbf{F}'(q_0)\uldq=0$ with $\uldu^j=(S^j)'(q_0)\uldq$, $u_0^j=S^j(q_0)$ reads as
\[
\int_\Omega \Bigl(\nabla \uldu^j\cdot\nabla w+q_0\uldu^jw+\uldq u_0^j w\Bigr)\, dx
=0 \,, \forall w\in W:=H^1(\Omega), \quad
\textup{tr}_{\partial\Omega}\uldu^j=0 \qquad j\in\mathbb{N}.
\]
From this, using integration by parts, we conclude that 
\[
\forall j\in\mathbb{N}\,\forall w\in M_{q_0}:=\{v\in H^2(\Omega)\, : \, -\Delta v+q_0\cdot v=0\}\, : \int_\Omega \uldq u^j w\, dx
=0.
\]
(Note that no boundary conditions are prescribed on $v\in M_{q_0}$.)
Since the boundary values $\varphi^j$ of $u_0^j$ run over a basis of $H^{-1/2}(\partial\Omega)$, this is equivalent to 
\[
\forall w_1,\,w_2\,\in M_{q_0}: \int_\Omega \uldq\, w_1\, w_2\, dx
=0,
\]
which means that $\uldq$ is orthogonal to the set of products of functions in $M_{q_0}$. This set has been shown to be dense in $L^2(\Omega)$, see, e.g., \cite[Theorem 6.25]{Kirsch:2011}, which implies $\uldq=0$.

\subsection{Nullspace condition \eqref{NKNP_ex} for the transient case in Example~\ref{ex:pot_revisited}}\label{sec_LinUni_pot_rev_trans}

$\mathbb{F}'(q_0,u_0)(\uldq,\uldu)=0$ is equivalent to $(D+q_0\cdot)\uldu+\uldq\cdot(u_0+\bar{h})=0$ and $C\uldu=0$, where we assume $(u_0+\bar{h})(x,t)=\phi(x)\psi(t)$.
Taking the $L^2_w$ inner product with $\varphi_i^k$ as well as the Laplace transform with respect to time, and defining the complex functions $\omega_{\lambda_i}(z):=\sum_{m=1}^M z^m\widehat{\mathcal{M}}_m(z)+\sum_{n=0}^N z^n\widehat{\mathcal{N}}_n(z)*\lambda_i^{\beta_n}$ yields
\[
\begin{aligned}
&\omega_{\lambda_i}(z)\,\widehat{\uldu}_i^k (z)= -
\langle \uldq\cdot\phi,\varphi_i^k\rangle_{L^2_w(\Omega)}\widehat{\psi}(z), \ \ i\in\mathbb{N}, \ k\in K^i, 
%\\&
\quad \textup{ and }\
\sum_{j\in\mathbb{N}}\sum_{k\in K^j} \widehat{\uldu}_j^k(z) \, C \varphi_j^k = 0
\end{aligned}
\]
for $\uldu_i^k:= \langle\uldu,\varphi_i^k\rangle_{L^2_w(\Omega)}$, and all $z\in\mathbb{C}$, which can be rewritten as a system
\begin{equation}\label{Fprimezero_transient}
\sum_{j\in\mathbb{N}}\frac{\widehat{\psi}(z)}{\omega_{\lambda_j}(z)}
\sum_{k\in K^j} \,a_j^k\, C \varphi_j^k \, = 0
\end{equation}
(valid for all $z\in\mathbb{C}$ except for the roots of $\omega_{\lambda_j}$, $j\in\mathbb{N}$)
for the coefficients $a_i^k:=\langle \uldq\cdot\phi,\varphi_i^k\rangle_{L^2_w(\Omega)}$.
In order to first of all disentangle the sum over $j\in\mathbb{N}$, we assume that the poles of $\frac{1}{\omega_{\lambda}}$ differ for different $\lambda$, that is
\begin{equation}\label{polesdiffer}
\forall i\in\mathbb{N}\, \exists p_i\in \mathbb{C}\, : \ \omega_{\lambda_i}(p_i)=0 \text{ and } \forall j\in\mathbb{N}\setminus \{i\}\, : \ \omega_{\lambda_j}(p_i)\not=0.
\end{equation}
Taking the residues of \eqref{Fprimezero_transient} we can thus use the fact that 
$\textup{Res}(\frac{1}{\omega_{\lambda_i}};p_i)\not=0$ and 
$\textup{Res}(\frac{1}{\omega_{\lambda_j}};p_i)=0$ for $j\not=i$, which yields 
\begin{equation}\label{Fprimezero_transient_1}
\forall i\in\mathbb{N}\, : \widehat{\psi}(p_i) \sum_{k\in K^i}  \,a_i^k\, C \varphi_i^k \, = 0
\end{equation}
We additionally suppose that $\psi$ was chosen such that $\widehat{\psi}(p_i)\not=0$ for all $i\in\mathbb{N}$ and make the linear independence assumption on the individual eigenspaces
\[
\forall i\in\mathbb{N}\, \Bigl(\forall \vec{b}\in \mathbb{R}^{K^i}\, : \ 
\sum_{k\in K^i}  \,b^k\, C \varphi_i^k \, = 0\ \Rightarrow \ \vec{b}=0\Bigr).
\]
(In the context of $C:=\textup{tr}_\Gamma$ this means that the eigenfunctions should not lose their linear independence when restricted to the observation set $\Gamma$; see \cite[Remark 4.1]{nonlinearity_imaging_fracWest} 
%\cite[Remark 11.2]{frac-book} 
for some examples and comments.)
This allows to conclude that for all $i\in\mathbb{N}$, $k\in K^i$, we have $a_i^k=0$ and thus $\uldq\cdot\phi=\sum_{j\in\mathbb{N}}\sum_{k\in K^j} \,a_j^k\, C \varphi_j^k \, = 0$. Having chosen $\phi$ nonzero almost everywhere, we can conclude $\uldq=0$.
\\
Condition \eqref{polesdiffer} can be easily verified in the parabolic and hyperbolic cases, but also in certain fractional damping models, cf., e.g, \cite{nonlinearity_imaging_fracWest}.
%\cite[Chapter 11]{frac-book}.

\subsection{Nullspace condition \eqref{NKNP_ex} for  Example~\ref{ex:diffabs}}\label{sec_LinUni_diffabs}
Here, $\mathbf{F}'(q_0)\uldq=0$ with $\uldu^{\lambda,n}=(S^{\lambda,n})'(q_0)\uldq$, $u_0^{\lambda,n}=S^{\lambda,n}(q_0)$ reads as
\[
\begin{aligned}
&\int_\Omega \Bigl((a_0\nabla \uldu^{\lambda,n}+\ulda \nabla u^{\lambda,n})\cdot\nabla w+(c_0\,\uldu^{\lambda,n}+\uldc\, u_0^{\lambda,n}) w\Bigr)\, dx
=0 \,, \forall w\in H^1(\Omega), 
%\\&\hspace*{10cm}
\textup{ and }\textup{tr}_{\partial\Omega}\uldu^{\lambda,n}=0 \\ 
&n\in\mathbb{N}, \quad \lambda\geq0.
\end{aligned}
\]
From this, integrating by parts and using completeness of $(\varphi^n)_{n\in\mathbb{N}}$ in $H^{-1/2}(\partial\Omega)$, we obtain the orthogonality relation
\begin{equation}\label{orth_diffabs}
\forall \lambda\geq0 \ \forall w_1,\,w_2\,\in M_{q_0}^\lambda: \ \int_\Omega \ulda\nabla w_1\cdot\nabla w_2 + \uldc w_1 w_2\, dx =0.
\end{equation}
with $M_{q_0}^\lambda:=\{v\in H^2(\Omega)\, : \, -\nabla\cdot( a_0\nabla v)+(c_0-\lambda)\cdot v=0\}$.
We restrict ourselves to the simple choice $a_0\equiv\text{const.}>0$, $c_0\equiv0$ so that, since $\lambda$ runs over all nonnegative real numbers, without loss of generality  $M_{q_0}^\lambda$ can be replaced by $M_{1,0}^\lambda:=\{v\in H^2(\Omega)\, : \, -\Delta v=\lambda v\}$ and similarly to the proof of \cite[Theorem 6.25]{Kirsch:2011} use plane wave elements
$w_{\lambda,k}(x):=e^{\imath\sqrt{\lambda}\hat{\theta}_k\cdot x}$, $k\in\{1,2\}$ of $M_{1,0}^\lambda$, with $\hat{\theta}_k$ being appropriately selected unit vectors in $\mathbb{R}^d$.\\
For $y\in\mathbb{R}^d$ arbitrary but fixed we choose 
$\lambda:=|y|^2$, $\theta_1:=\frac{y}{|y|}$, $\theta_2=0$ so that inserting $w_{\lambda,1},\,w_{\lambda,2}$ in \eqref{orth_diffabs} yields $ \int_\Omega \uldc \,e^{\imath y\cdot x}\, dx =0$. Since $y\in\mathbb{R}^d$ was arbitrary, this implies $\uldc=0$.\\
Again, fixing an arbitrary $y\in\mathbb{R}^d\setminus\{0\}$, we now choose $y^\bot$ orthogonal to $y$ and normalized $|y^\bot|=1$, $\epsilon\in(0,|y|^2)$ and set $\lambda:=|y|^2-\epsilon$, $\theta_{1/2}=y\pm\epsilon y^\bot$. It is readily checked that inserting the corresponding $w_{\lambda,1},\,w_{\lambda,2}$ in \eqref{orth_diffabs} yields $ \int_\Omega \ulda \,e^{\imath y\cdot x}\, dx =0$ so that by varying $y\in\mathbb{R}^d\setminus\{0\}$we obtain $\ulda=0$.

\end{appendices}


\begin{thebibliography}{10}

\bibitem{ArridgeLionheart:1998}
Simon~R. Arridge and William R.~B. Lionheart.
\newblock Nonuniqueness in diffusion-based optical tomography.
\newblock {\em Opt. Lett.}, 23(11):882--884, Jun 1998.

\bibitem{ArridgeSchotland:2009}
Simon~R Arridge and John~C Schotland.
\newblock Optical tomography: forward and inverse problems.
\newblock {\em Inverse Problems}, 25(12):123010, dec 2009.

\bibitem{AstalaPaeivarinta:2006}
Kari Astala and Lassi P\"{a}iv\"{a}rinta.
\newblock Calder\'{o}n's inverse conductivity problem in the plane.
\newblock {\em Ann. of Math. (2)}, 163(1):265--299, 2006.

\bibitem{Atanackovic:2014}
Teodor~M Atanackovi{\'c}, Stevan Pilipovi{\'c}, Bogoljub Stankovi{\'c}, and
  Du{\v{s}}an Zorica.
\newblock {\em Fractional calculus with applications in mechanics}.
\newblock Wiley Online Library, 2014.

\bibitem{Auslender:1971}
A.~Auslender.
\newblock M\'{e}thodes num\'{e}riques pour la d\'{e}composition et la
  minimisation de fonctions non diff\'{e}rentiables.
\newblock {\em Numer. Math.}, 18:213--223, 1971/72.

\bibitem{BagleyTorvik:1986}
Ronald~L Bagley and Peter~J Torvik.
\newblock On the fractional calculus model of viscoelastic behavior.
\newblock {\em Journal of Rheology}, 30(1):133--155, 1986.

\bibitem{BakKok04}
A.~B. Bakushinsky and M.~Y. Kokurin.
\newblock {\em Iterative Methods for Approximate Solution of Inverse Problems},
  volume 577 of {\em Mathematics and Its Applications}.
\newblock Springer, Dordrecht, 2004.

\bibitem{BertsekasTsitsiklis:1989}
Dimitri~P. Bertsekas and John~N. Tsitsiklis.
\newblock {\em Parallel and distributed computation: numerical methods}.
\newblock Athena Scientific, Belmont, MA, 2014.
\newblock Originally published by Prentice-Hall, Inc. in 1989. Includes
  corrections (1997).

\bibitem{diss}
Barbara Blaschke-Kaltenbacher.
\newblock {\em Some {N}ewton Type Methods for the Regularization of Nonlinear
  Ill-posed Problems}.
\newblock PhD thesis, University of Linz, 1996.

\bibitem{NewtonKaczmarz}
Martin Burger and Barbara Kaltenbacher.
\newblock {Regularizing Newton-Kaczmarz methods for nonlinear ill-posed
  problems}.
\newblock {\em SIAM J.Numer.Anal.}, 44:153--182, 2006.

\bibitem{Cannon:1975}
J.~R. Cannon.
\newblock A class of inverse problems: the determination of second order
  elliptic partial differential operators from over-specified boundary data.
\newblock In {\em Improperly posed boundary value problems}, pages 85--93. Res.
  Notes in Math., No. 1. Pitman, London, 1975.

\bibitem{CanutoKavian:2004}
Bruno Canuto and Otared Kavian.
\newblock Determining two coefficients in elliptic operators via boundary
  spectral data: a uniqueness result.
\newblock {\em Boll. Unione Mat. Ital. Sez. B Artic. Ric. Mat. (8)},
  7(1):207--230, 2004.

\bibitem{ChaabaneJaoua:1999}
Slim Chaabane and Mohamed Jaoua.
\newblock Identification of {R}obin coefficients by the means of boundary
  measurements.
\newblock {\em Inverse Problems}, 15(6):1425--1438, 1999.

\bibitem{ChaventKunisch:1996}
G.~Chavent and K.~Kunisch.
\newblock On weakly nonlinear inverse problems.
\newblock {\em SIAM J. Appl. Math.}, 56(2):542--572, 1996.

\bibitem{DeEnSc98}
P.~Deuflhard, H.~W. Engl, and O.~Scherzer.
\newblock A convergence analysis of iterative methods for the solution of
  nonlinear ill-posed problems under affinely invariant conditions.
\newblock {\em Inverse Problems}, 14:1081--1106, 1998.

\bibitem{EnglHankeNeubauer:1996}
Heinz~W. Engl, Martin Hanke, and Andreas Neubauer.
\newblock {\em Regularization of {I}nverse {P}roblems}.
\newblock Kluwer, Dordrecht, 1996.

\bibitem{GibsonHebdenArridge:2005}
A~P Gibson, J~C Hebden, and S~R Arridge.
\newblock Recent advances in diffuse optical imaging.
\newblock {\em Physics in Medicine and Biology}, 50(4):R1--R43, feb 2005.

\bibitem{GrippoSciandrone:1999}
Luigi Grippo and Marco Sciandrone.
\newblock Globally convergent block-coordinate techniques for unconstrained
  optimization.
\newblock {\em Optim. Methods Softw.}, 10(4):587--637, 1999.

\bibitem{Hanke97}
M.~Hanke.
\newblock {A regularization Levenberg--Marquardt scheme, with applications to
  inverse groundwater filtration problems}.
\newblock {\em Inverse Problems}, 13:79--95, 1997.

\bibitem{Hanke2010}
Martin Hanke.
\newblock The regularizing levenberg-marquardt scheme is of optimal order.
\newblock {\em J. Integral Equations Applications}, 22(2):259--283, 06 2010.

\bibitem{HankeNeubauerScherzer:1995}
Martin Hanke, Andreas Neubauer, and Otmar Scherzer.
\newblock A convergence analysis of the {L}andweber iteration for nonlinear
  ill-posed problems.
\newblock {\em Numer. Math.}, 72(1):21--37, 1995.

\bibitem{Harrach:2009}
Bastian Harrach.
\newblock On uniqueness in diffuse optical tomography.
\newblock {\em Inverse Problems}, 25(5):055010, 14, 2009.

\bibitem{Harrach:2012}
Bastian Harrach.
\newblock Simultaneous determination of the diffusion and absorption
  coefficient from boundary data.
\newblock {\em Inverse Probl. Imaging}, 6(4):663--679, 2012.

\bibitem{HKPS:2007}
B.~Hofmann, B.~Kaltenbacher, C.~P\"{o}schl, and O.~Scherzer.
\newblock A convergence rates result for {T}ikhonov regularization in {B}anach
  spaces with non-smooth operators.
\newblock {\em Inverse Problems}, 23(3):987--1010, 2007.

\bibitem{Holm:2019}
Sverre Holm.
\newblock {\em Waves with Power-Law Attenuation}.
\newblock Springer, 2019.

\bibitem{Inglese:1997}
Gabriele Inglese.
\newblock An inverse problem in corrosion detection.
\newblock {\em Inverse Problems}, 13(4):977--994, 1997.

\bibitem{Isakov:1991}
Victor Isakov.
\newblock Inverse parabolic problems with the final overdetermination.
\newblock {\em Comm. Pure Appl. Math.}, 44(2):185--209, 1991.

\bibitem{Isakov:2006}
Victor Isakov.
\newblock {\em Inverse {P}roblems for {P}artial {D}ifferential {E}quations}.
\newblock Springer, New York, second edition, 2006.

\bibitem{KalNeuSch08}
B.~Kaltenbacher, A.~Neubauer, and O.~Scherzer.
\newblock {\em { Iterative Regularization Methods for Nonlinear Problems}}.
\newblock de Gruyter, Berlin, New York, 2008.
\newblock Radon Series on Computational and Applied Mathematics.

\bibitem{KNSbook:2008}
B.~Kaltenbacher, A.~Neubauer, and O.~Scherzer.
\newblock {\em { Iterative Regularization Methods for Nonlinear Problems}}.
\newblock de Gruyter, Berlin, New York, 2008.
\newblock Radon Series on Computational and Applied Mathematics.

\bibitem{dpapertheo}
Barbara Kaltenbacher.
\newblock {Some Newton type methods for the regularization of nonlinear
  ill--posed problems}.
\newblock {\em Inverse Problems}, 13:729--753, 1997.

\bibitem{Broyden}
Barbara Kaltenbacher.
\newblock {On Broyden's method for nonlinear ill--posed problems}.
\newblock {\em Numerical Functional Analysis and Optimization}, 19:807--833,
  1998.

\bibitem{frac_TUM}
Barbara Kaltenbacher, Ustim Khristenko, Vanja Nikoli\'c, Mabel~Lizzy Rajendran,
  and Barbara Wohlmuth.
\newblock Determining kernels in linear viscoelasticity.
\newblock {\em Journal of Computational Physics}, 464, 2022.
\newblock see also arXiv:2112.14071 [math.AP].

\bibitem{KalLas:2009}
Barbara Kaltenbacher and I.~Lasiecka.
\newblock {Global existence and exponential decay rates for the Westervelt
  equation.}
\newblock {\em Discrete and Continuous Dynamical Systems (DCDS)}, 2:503--525,
  2009.

\bibitem{fracJMGT}
Barbara Kaltenbacher and Vanja Nikoli\'c.
\newblock {Time-fractional Moore-Gibson-Thompson equations}.
\newblock {\em Mathematical Models and Methods in the Applied Sciences M3AS},
  32:965--1013, 2022.
\newblock see also arXiv:2104.13967 [math.AP].

\bibitem{IRGNMIvanov}
Barbara Kaltenbacher and Mario Previatti~de Souza.
\newblock {Convergence and adaptive discretization of the IRGNM Tikhonov and
  the IRGNM Ivanov method under a tangential cone condition in Banach space}.
\newblock {\em Numerische Mathematik}, 140:449--478, 2018.
\newblock arxiv:1707.07589 [math.NA].

\bibitem{fracPAT}
Barbara Kaltenbacher and William Rundell.
\newblock Some inverse problems for wave equations with fractional derivative
  attenuation.
\newblock {\em Inverse Problems}, 37(4):045002, mar 2021.

\bibitem{nonlinearity_imaging_fracWest}
Barbara Kaltenbacher and William Rundell.
\newblock On an inverse problem of nonlinear imaging with fractional damping.
\newblock {\em Mathematics of Computation}, 91:245--276, 2022.
\newblock see also arXiv:2103.08965 [math.AP].

\bibitem{frac_space-potential}
Barbara Kaltenbacher and William Rundell.
\newblock On the determination of a coefficient in a space-fractional equation
  with operators of {A}bel type.
\newblock {\em Journal of Mathematical Analysis and Applications}, 516, 2022.

\bibitem{nonlinearity_imaging_both}
Barbara Kaltenbacher and William Rundell.
\newblock On the simultanenous identification of two space dependent
  coefficients in a quasilinear wave equation.
\newblock 2022.
\newblock submitted; see also arXiv:2210.08063 [math.NA].

\bibitem{Kirsch:2011}
Andreas Kirsch.
\newblock {\em An introduction to the mathematical theory of inverse problems},
  volume 120 of {\em Applied Mathematical Sciences}.
\newblock Springer, New York, 2nd ed. edition, 2011.

\bibitem{Kirsch:2017}
Andreas Kirsch, 2017.
\newblock private communication.

\bibitem{Kirsch:2021}
Andreas Kirsch.
\newblock {\em An Introduction to the Mathematical Theory of Inverse Problems}.
\newblock Applied Mathematical Sciences. {Springer International Publishing},
  3rd ed. edition, 2021.

\bibitem{LuoTseng:1992}
Z.~Q. Luo and P.~Tseng.
\newblock On the convergence of the coordinate descent method for convex
  differentiable minimization.
\newblock {\em J. Optim. Theory Appl.}, 72(1):7--35, 1992.

\bibitem{Nachman:1988}
Adrian~I. Nachman.
\newblock Reconstructions from boundary measurements.
\newblock {\em Ann. of Math. (2)}, 128(3):531--576, 1988.

\bibitem{oparnica2020well}
Ljubica Oparnica and Endre S{\"u}li.
\newblock Well-posedness of the fractional {Z}ener wave equation for
  heterogeneous viscoelastic materials.
\newblock {\em Fractional Calculus and Applied Analysis}, 23(1):126--166, 2020.

\bibitem{OrtegaReinboldt:1970}
J.~M. Ortega and W.~C. Rheinboldt.
\newblock {\em Iterative solution of nonlinear equations in several variables}.
\newblock Academic Press, New York-London, 1970.

\bibitem{Pierce:1979}
Alan Pierce.
\newblock Unique identification of eigenvalues and coefficients in a parabolic
  problem.
\newblock {\em SIAM J. Control Optim.}, 17(4):494--499, 1979.

\bibitem{Quyen2020}
Tran Nhan~Tam Quyen.
\newblock Determining two coefficients in diffuse optical tomography with
  incomplete and noisy {C}auchy data.
\newblock {\em Inverse Problems}, 36(9):095011, 31, 2020.

\bibitem{Ried05}
A.~Rieder.
\newblock Inexact {N}ewton regularization using conjugate gradients as inner
  iteration.
\newblock {\em SIAM J.\ Numer.\ Anal.}, 43:604--622, 2005.

\bibitem{Roubicek}
T~{Roub\' i\v cek}.
\newblock {\em Nonlinear Partial Differential Equations with Applications}.
\newblock Springer Basel, 2013.

\bibitem{Rundell:1987}
William Rundell.
\newblock The determination of a parabolic equation from initial and final
  data.
\newblock {\em Proc. Amer. Math. Soc.}, 99(4):637--642, 1987.

\bibitem{saedpanah2014well}
Fardin Saedpanah.
\newblock Well-posedness of an integro-differential equation with positive type
  kernels modeling fractional order viscoelasticity.
\newblock {\em European Journal of Mechanics-A/Solids}, 44:201--211, 2014.

\bibitem{Scherzer:1995}
Otmar Scherzer.
\newblock Convergence criteria of iterative methods based on {L}andweber
  iteration for nonlinear problems.
\newblock {\em J. Math. Anal. Appl.}, 194:911--933, 1995.

\bibitem{SchusterKaltenbacherHofmannKazimierski:2012}
Thomas Schuster, Barbara Kaltenbacher, Bernd Hofmann, and Kamil~S. Kazimierski.
\newblock {\em Regularization {M}ethods in {B}anach {S}paces}.
\newblock Walter de Gruyter, Berlin, 2012.

\bibitem{SomersaloCheneyIsaacson:1992}
Erkki Somersalo, Margaret Cheney, and David Isaacson.
\newblock Existence and uniqueness for electrode models for electric current
  computed tomography.
\newblock {\em SIAM Journal on Applied Mathematics}, 52(4):1023--1040, 1992.

\bibitem{SylvesterUhlmann:1987}
John Sylvester and Gunther Uhlmann.
\newblock A global uniqueness theorem for an inverse boundary value problem.
\newblock {\em Ann. of Math. (2)}, 125(1):153--169, 1987.

\bibitem{Szabo:1994}
Thomas~L. Szabo.
\newblock Time domain wave equations for lossy media obeying a frequency power
  law.
\newblock {\em The Journal of the Acoustical Society of America},
  96(1):491--500, 1994.

\end{thebibliography}
\end{document}